\newtheorem{thmintro}{Theorem}
\theoremstyle{definition}
\newtheorem{rkintro}{Remark}
\newcounter{countercheck}[section]
\theoremstyle{plain}
\newtheorem{theorem}{Theorem}[section]
\newtheorem{proposition}[theorem]{Proposition}
\newtheorem{lemma}[theorem]{Lemma}
\theoremstyle{definition}
\newtheorem{definition}[theorem]{Definition}
\newtheorem{convention}[theorem]{Convention}
\newtheorem{example}[theorem]{Example}
\theoremstyle{remark}
\newtheorem{remark}[theorem]{Remark}
\renewcommand{\phi}{\varphi}
\renewcommand{\epsilon}{\varepsilon}
\newcommand{\NN}{\mathbb{N}}
\newcommand{\ZZ}{\mathbb{Z}}
\newcommand{\RR}{\mathbb{R}}
\newcommand{\FF}{\mathbb{F}}
\newcommand{\KK}{\mathbb{K}}
\renewcommand{\P}{\mathcal{P}}
\DeclareMathOperator{\id}{id}
\DeclareMathOperator{\Aut}{Aut}
\DeclareMathOperator{\proj}{proj}
\numberwithin{equation}{section} 
\begin{document}

\title{Construction of Commutator Blueprints}

\author{Sebastian Bischof}

\thanks{email: bischof.math@icloud.com}

\thanks{UCLouvain, IRMP, Chemin du Cyclotron 2, 1348 Louvain-la-Neuve, Belgium}

\thanks{Keywords: Groups of Kac-Moody type, Commutation relations, Commutator blueprints}

\thanks{Mathematics Subject Classification 2020: 20D15, 20E42}

\begin{abstract}
	Commutator blueprints can be seen as blueprints for constructing RGD-systems over $\FF_2$ with prescribed commutation relations. In this paper we construct several families of Weyl-invariant commutator blueprints, mostly of universal type. Together with the main result of \cite{BiRGD} we obtain new examples of exotic RGD-systems of universal type over $\FF_2$.
\end{abstract}

\maketitle

\section{Introduction}

To any generalized Cartan matrix $A$ and any field $\FF$ we can construct an associated Kac-Moody group $\mathbf{G}_A(\FF)$ (cf.\ \cite{Ti87}). This group comes equipped with a distinguished family of subgroups $(U_{\alpha})_{\alpha \in \Phi}$ called \emph{root subgroups} indexed by the set $\Phi$ of real roots of $A$. The generalized Cartan matrix $A$ provides also a \emph{Weyl group} and hence a Coxeter system $(W(A), S(A))$. The real roots of $A$ correspond to the roots of $(W(A), S(A))$ (viewed as half-spaces). The pair $\left( \mathbf{G}_A(\FF), \left( U_{\alpha} \right)_{\alpha \in \Phi} \right)$ satisfies certain properties, for instance a commutation relation between root groups corresponding to prenilpotent pairs of roots. The combinatorics of such pairs had been formalized by Tits in the general framework of RGD-systems (cf.\ \cite{Ti92}).

Let $(W, S)$ be a Coxeter system and let $\Phi$ be its associated set of roots. An \emph{RGD-system of type $(W, S)$} is a pair $\mathcal{D} = \left( G, \left( U_{\alpha} \right)_{\alpha \in \Phi} \right)$ consisting of a group $G$ and a family of subgroup $\left( U_{\alpha} \right)_{\alpha \in \Phi}$ satisfying some axioms which are motivated by the theory of Kac-Moody groups. It is natural to ask whether any RGD-system is of \emph{Kac-Moody origin}. In the $2$-spherical case it follows from the main result of \cite{Mu99} that any RGD-system of irreducible and \emph{crystallographic} type (i.e.\ $o(st) \in \{2, 3, 4, 6\}$ for $s\neq t\in S$) having finite root groups of order $>3$ which generate the group $G$ must be a split or almost split Kac-Moody group in the sense of \cite{Re03}. In the right-angled case we have more flexibility. In \cite{RR06} Rémy and Ronan have constructed \emph{exotic} RGD-systems with prescribed isomorphism types of root groups. For mixed ground fields these groups are not of Kac-Moody origin. One main aspect in their construction is that all root groups corresponding to prenilpotent pairs of roots commute. The main goal of this paper is to establish the existence of new examples of exotic RGD-systems both of $2$-spherical and right-angled type having non-trivial commutation relations. In all examples, the root groups will have cardinality $2$.

In \cite{BiRGD} we have introduced the notion of a commutator blueprint; we refer to Section \ref{Section: commutator blueprints} for more details. These purely combinatorial objects can be seen as a blueprint for constructing RGD-systems with prescribed commutation relations. To each RGD-system one can associate a commutator blueprint. Such blueprints are called \emph{integrable}. A necessary condition for integrability is for the commutator blueprint to be \emph{Weyl-invariant} (roughly speaking:\ the commutation relations are Weyl-invariant). For this reason we will focus on constructing Weyl-invariant commutator blueprints.

\subsection*{Commutator blueprints of universal type}

Using the main result of \cite{BiRGD} one can show that Weyl-invariant commutator blueprints of \emph{universal} type (i.e.\ $o(st) = \infty$ for $s\neq t\in S$) are integrable (cf.\ \cite[Introduction]{BiRGD} for more details). This implies that the commutator blueprints which we will mention in Theorem \ref{Main theorem: Tits construction} -- \ref{Main theorem: U_w unbounded} below are integrable. The first family of examples generalizes Tits' construction of trivalent \emph{Moufang twin trees} in \cite[Section $5.4$ in $95/96$]{Ti73-00}, which are essentially the same as (center-free) RGD-systems of type $\tilde{A}_1$ (cf.\ Theorem \ref{Theorem: commutator blueprint}). In the case $\tilde{A}_1$ we have $\Phi = \{+, -\} \times \ZZ$ and all the examples constructed by Tits are of the following form, where $\epsilon \in \{+, -\}$:
\allowdisplaybreaks
\begin{align*}
	&&\forall z, z' \in \ZZ: &&[U_{{\epsilon, 2z}}, U_{\epsilon, z'}] = 1 &&\text{and} &&[U_{\epsilon, 2z+1}, U_{\epsilon, 2z'+1}] \leq \langle U_{\epsilon, 2i} \mid i \in \ZZ \rangle.
\end{align*}

\begin{thmintro}\label{Main theorem: Tits construction}
	Suppose $(W, S)$ is universal and of rank at least $2$. Then there exists a Weyl-invariant commutator blueprint $\mathcal{M}$ and $s\neq t\in S$ such that the restriction of $\mathcal{M}$ to the $\{s, t\}$-residue coincides with the commutator blueprint coming from Tits' construction of trivalent Moufang twin trees.
\end{thmintro}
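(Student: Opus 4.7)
The plan is to lift Tits' construction from one fixed rank-$2$ residue to the whole Coxeter complex in a Weyl-equivariant way. I would fix distinct $s, t \in S$, let $R_{0}$ denote a chosen $\{s,t\}$-residue, and identify its roots with $\{+,-\} \times \ZZ$ so that the formulas displayed in the introduction apply. The setwise stabilizer of $R_{0}$ in $W$ is the parabolic subgroup $W_{\{s,t\}}$, acting on $R_{0}$ by infinite-dihedral reflections; since Tits' construction yields an RGD-system on the $\tilde{A}_{1}$-residue, its underlying commutator blueprint is in particular Weyl-invariant on $R_{0}$ and is therefore preserved by this action.

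I would then define $\mathcal{M}$ globally as follows. The Weyl group acts transitively on rank-$2$ residues of type $\{s,t\}$, so for each such residue $R$ I pick $w_{R} \in W$ with $w_{R}(R_{0}) = R$ and transport Tits' data from $R_{0}$ to $R$. By the $W_{\{s,t\}}$-invariance noted above, the transported data on $R$ does not depend on the choice of $w_{R}$. For every prenilpotent pair of roots whose two underlying walls do not lie in a common residue of type $\{s,t\}$, I would declare the commutator trivial. Weyl-invariance of the resulting $\mathcal{M}$ is immediate by construction, and the restriction to $R_{0}$ recovers Tits' formulas by design.

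The main obstacle will be to verify that $\mathcal{M}$ satisfies the axioms of a commutator blueprint as defined in Section~\ref{Section: commutator blueprints}, in particular any higher compatibility conditions imposed on configurations of three or more pairwise prenilpotent roots. Here the universal hypothesis is decisive: since $o(rr') = \infty$ for all $r \neq r' \in S$, distinct rank-$2$ residues intersect in at most one chamber, so multi-root consistency relations cannot couple data living on different residues. I therefore expect every such relation to reduce to one internal to a single rank-$2$ residue, where it is automatic from the fact that Tits' construction is itself a valid Weyl-invariant commutator blueprint on $\tilde{A}_{1}$. Carrying out this reduction cleanly is the central technical step of the proof.
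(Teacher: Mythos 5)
The construction you propose is, modulo bookkeeping, the same as the paper's: define the nontrivial $M$-data only when there is a minimal gallery of alternating type $(s,t,\ldots,s)$ between $\alpha$ and $\beta$ (equivalently, when the walls of $\alpha$ and $\beta$ sit in a common $\{s,t\}$-residue), and set $M_{\alpha,\beta}^G = \emptyset$ otherwise. Describing this as ``transport from $R_0$'' versus ``gallery-type condition'' is an expository, not a mathematical, difference; the paper's phrasing makes Weyl-invariance manifest with no choices to check. So the setup is fine.

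There is, however, a genuine gap in the verification step, and it is exactly where you say ``I therefore expect every such relation to reduce to one internal to a single rank-$2$ residue, where it is automatic from the fact that Tits' construction is itself a valid Weyl-invariant commutator blueprint on $\tilde{A}_1$.'' Two problems. First, the reduction is not to relations internal to one residue: when you check the analogue of $(2\text{-n}3)$ or $(\mathrm{C}3)$ for a triple $\alpha_i, \alpha_j, \alpha_{k+1}$ with $\alpha_i,\alpha_j$ in a common $\{s,t\}$-residue but $\alpha_{k+1}$ in a different one, the relation still involves all three roots and their cross-commutators, which live across residues. What actually makes those checks go through is not a localization argument but a structural one that you do not isolate: every nonempty $M_{\alpha,\beta}^G$ consists of roots of type $t$, and a root of type $t$ has trivial commutator with \emph{every} other generator (since nonempty $M$-sets require both endpoints to have type $s$). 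Hence the type-$t$ generators are central involutions in each $U_w$, which is what makes $(2\text{-n}1)$--$(2\text{-n}3)$ collapse. Your ``residues intersect in at most one chamber'' remark is also not correct as stated (two rank-$2$ residues of different types through a common chamber share a panel, hence two chambers), and in any case it is not the observation doing the work. Second, and more seriously, you appeal to ``the fact that Tits' construction is itself a valid Weyl-invariant commutator blueprint on $\tilde{A}_1$'' as a known input. The paper explicitly notes there is no proof in the literature of the existence of these trivalent Moufang twin trees, and positions Theorem~\ref{Main theorem: Tits construction} (together with \cite[Theorem~A]{BiRGD}) as an \emph{independent} proof of that fact. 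Assuming the rank-$2$ validity as given is therefore circular in this context. The paper avoids this by verifying the axioms from scratch via the pre-commutator blueprint machinery of Section~\ref{Section: commutator blueprints}, specifically Theorem~\ref{Theorem: pre-com = com}, which is precisely the missing technical content of your argument.
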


\begin{rkintro}
	Suppose that $(W, S)$ has rank $2$. Then Theorem \ref{Main theorem: Tits construction} is just the commutator blueprint obtained from Tits' construction in \cite[Section $5.4$ in $95/96$]{Ti73-00}. Unfortunately, there is no proof available in the literature on the existence of these Moufang twin trees. Theorem \ref{Main theorem: Tits construction} together with \cite[Theorem A]{BiRGD} provides a different approach to the results about the trivalent Moufang twin trees constructed by Tits.
\end{rkintro}

\begin{rkintro}
	Again, we suppose that $(W, S)$ has rank $2$. The RGD-systems corresponding to the commutator blueprints in Theorem \ref{Main theorem: Tits construction} yield Moufang twin trees of order $2$ and the precise commutation relations can be found in Theorem \ref{Theorem: commutator blueprint}. The group $G$ from the RGD-systems -- which coincides with the automorphism of the corresponding trivalent Moufang twin tree -- is virtually simple in many cases. To see this, we choose in Theorem \ref{Theorem: commutator blueprint} $K$ to be finite and $J_k \neq \emptyset$ for some $k\in K$. Then the automorphism group of these Moufang twin trees is virtually simple by \cite[Theorem 1]{CR16}. If we assume additionally $1\in K$ and $J_1 := \{1\}$, then we can apply \cite[Lemma 6(i')]{CR16}. This guarantees that the commutator subgroup of the automorphism group of the Moufang twin tree associated with the RGD-system of type $\tilde{A}_1$ is simple and has finite index in the automorphism group. We note that there are elaborated results available which show that the automorphism group of almost all Moufang twin trees of order $2$ is virtually simple (cf.\ \cite{GHM16preprint2}). The proof is based on the idea given in \cite{CR16}.
\end{rkintro}

\begin{rkintro}
	Gr\"uninger, Horn and M\"uhlherr have shown in \cite{GHM16} that in the case $\tilde{A}_1$ over $\FF_2$ the commutation relations are generally very restrictive and that they cannot be significantly more complicated than those constructed by Tits. Thus it is natural to ask whether the commutation relations of any trivalent Moufang twin tree are as stated just before Theorem \ref{Main theorem: Tits construction}. In \cite[Introduction]{GHM16}, Gr\"uninger, Horn and M\"uhlherr announced that they constructed new examples of Moufang twin trees of order $2$ having different commutation relations than those constructed by Tits. In the present paper we provide independently an example with different commutation relations (cf.\ Theorem \ref{Theorem: exotic commutator blueprint} for the precise commutation relations). 
\end{rkintro}

We now turn our attention to the nilpotency class of the groups $U_w$. These groups appear as subgroups of RGD-systems and are generated by suitable root subgroups. It is a consequence of \cite[Theorem A]{GHM16} that in the case $\tilde{A}_1$ the group $U_w$ is nilpotent of class at most $2$ for all $w\in W$, provided that all root groups are isomorphic to $(\FF_p, +)$ for a fixed prime $p$. This result was generalized in \cite{PaDiss21} and includes the cases where $U_{\alpha_s} \cong (\KK_s, +)$ with $\KK_s$ a field of characteristic different from $2$ (cf.\ also \cite{SW08}, \cite{Seg09}).

In \cite[Theorem $1.2$]{Ca07} Caprace has shown that the nilpotency class of the groups $U_w$ in Kac-Moody groups is bounded above by a constant only depending on the generalized Cartan matrix $A$ and not on $w$. We will see that the general situation is very different and the results about Kac-Moody groups do not generalize to arbitrary RGD-systems. Even more, we can construct examples of any rank at least $3$ such that the nilpotency class of the groups $U_w$ can be arbitrarily large (cf.\ Theorem \ref{Theorem: M_nil(n)}\ref{Nilpotency class M_nil(n) bounded} and \cite[Theorem A]{BiRGD}). To make the statement precise, for an RGD-system $\mathcal{D}$ we define $\mathrm{ndeg}(\mathcal{D})$ to be the supremum of the nilpotency classes of the subgroups $U_w$ for all $w\in W$.

\begin{thmintro}\label{Main theorem: n-nilpotent}
	For each $n\in \NN_{\geq 3}$ there exists an RGD-system $\mathcal{D}_n$ with $\mathrm{ndeg}(\mathcal{D}_n) = n-1$.
\end{thmintro}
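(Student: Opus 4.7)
The plan is to deduce Theorem \ref{Main theorem: n-nilpotent} from the referenced Theorem \ref{Theorem: M_nil(n)} together with \cite[Theorem A]{BiRGD}. For each $n \geq 3$, I would first construct, on a universal Coxeter system $(W,S)$ of rank at least $3$, a Weyl-invariant commutator blueprint $\mathcal{M}_{\mathrm{nil}}(n)$ whose commutation rules are engineered to encode an $(n-1)$-step tower of iterated commutators. Concretely, the idea is to single out a distinguished family of roots $\alpha_1, \dots, \alpha_n$ all lying in $\Phi_w$ for a carefully chosen $w \in W$, and to prescribe commutator values of the form $[U_{\alpha_i}, U_{\alpha_j}] \le \langle U_{\alpha_k} \mid k \in K_{i,j} \rangle$ for explicit index sets $K_{i,j}$ designed so that the iterated commutator $[[\dots [U_{\alpha_1}, U_{\alpha_2}], \dots ], U_{\alpha_{n-1}}]$ is non-trivial while every iterated commutator of length $n$ collapses to the identity.

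Two things must then be verified about the prescription. First, Weyl-invariance: the chosen commutator values must be compatible with the natural $W$-action on $\Phi$, so that translating any defining relation by an element of $W$ again lies among the prescribed rules. Second, internal consistency of $\mathcal{M}_{\mathrm{nil}}(n)$ as a commutator blueprint in the sense recalled in Section \ref{Section: commutator blueprints}, which amounts to checking that repeated substitution of the defining rules never produces contradictions. Since $(W,S)$ is universal, \cite[Theorem A]{BiRGD} then applies and yields an RGD-system $\mathcal{D}_n = (G_n, (U_\alpha)_{\alpha \in \Phi})$ over $\FF_2$ realizing precisely these commutation relations.

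To finish I would compute $\mathrm{ndeg}(\mathcal{D}_n)$ by sandwiching it between $n-1$ and $n-1$. The upper bound $\mathrm{ndeg}(\mathcal{D}_n) \leq n-1$ follows by induction on commutator length directly from the defining relations: any length-$n$ iterated commutator of root subgroups inside a fixed $U_w$ reduces, after finitely many substitutions, to a product of trivial terms. For the lower bound, I would exhibit an explicit $w_n \in W$ (coming from a reduced word in three distinguished generators, chosen so that $\alpha_1, \dots, \alpha_{n-1} \in \Phi_{w_n}$) for which the witnessed length-$(n-1)$ iterated commutator is non-trivial in $U_{w_n}$; non-triviality is read off from $\mathcal{M}_{\mathrm{nil}}(n)$ before integration, and then transferred to $\mathcal{D}_n$ via the faithfulness properties of the RGD-axioms.

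The main obstacle will be the precise design and verification of $\mathcal{M}_{\mathrm{nil}}(n)$ in Theorem \ref{Theorem: M_nil(n)}. One must simultaneously ensure Weyl-invariance, consistency of the substitution rules, non-triviality of the targeted length-$(n-1)$ commutator, and vanishing of all longer ones -- a delicate combinatorial bookkeeping problem over the root system, essentially amounting to defining the correct nested family of root subsets that witness the prescribed nilpotency depth. Once that construction is in place, the deduction of Theorem \ref{Main theorem: n-nilpotent} reduces to invoking \cite[Theorem A]{BiRGD}.
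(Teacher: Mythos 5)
Your proposal matches the paper's approach: the paper proves Theorem~\ref{Main theorem: n-nilpotent} exactly by constructing the Weyl-invariant commutator blueprint $\mathcal{M}_{\mathrm{nil}}(n)$ of Definition~\ref{Definition: M_nil n} (which encodes upper-triangular-matrix commutation relations via $(i_1,j_1,i_2,j_2)$-galleries, precisely the ``distinguished family of roots with prescribed nested commutators'' you describe), showing in Theorem~\ref{Theorem: M_nil(n)}\ref{Nilpotency class M_nil(n) bounded} that every $U_w$ is nilpotent of class at most $n-1$ with equality attained for a $(1,2,n-1,n)$-gallery, and then invoking \cite[Theorem A]{BiRGD} to integrate into an RGD-system $\mathcal{D}_n$. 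The only caveat is that your sketch defers the ``delicate combinatorial bookkeeping'' to Theorem~\ref{Theorem: M_nil(n)}, which is precisely where the actual work lies, but the deduction you outline is the correct one.
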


Theorem \ref{Main theorem: n-nilpotent} implies that a generalization of \cite[Theorem A]{GHM16} to higher rank is not possible. At present it is unclear whether for a fixed RGD-system $\mathcal{D}$, the nilpotency class of the groups $U_w$ is bounded above by a constant depending only on $\mathcal{D}$ and not on $w$. The next theorem -- which is a consequence of Theorem \ref{Theorem: M_nil(n)}\ref{Nilpotency class M_nil(n) unbounded} together with \cite[Theorem A]{BiRGD} -- shows that such a constant does not exist in general.

\begin{thmintro}\label{Main theorem: U_w unbounded}
	There exists an RGD-system $\mathcal{D}$ of each rank at least $3$ with $\mathrm{ndeg}(\mathcal{D}) = \infty$.
\end{thmintro}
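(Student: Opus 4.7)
The plan is to obtain this theorem as a direct corollary of two ingredients already isolated in the introduction: the existence statement of Theorem \ref{Theorem: M_nil(n)}\ref{Nilpotency class M_nil(n) unbounded}, which supplies, in each rank $n \geq 3$, a Weyl-invariant commutator blueprint $\mathcal{M}$ of universal type in which the associated groups $U_w(\mathcal{M})$ have nilpotency classes that are unbounded as $w$ ranges over $W$; and the integrability result \cite[Theorem A]{BiRGD}, which, under the hypotheses of universal type and Weyl-invariance, turns such an $\mathcal{M}$ into the commutator blueprint of a genuine RGD-system.

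Concretely I would fix $n \geq 3$ and a universal Coxeter system $(W,S)$ of rank $n$, and apply Theorem \ref{Theorem: M_nil(n)}\ref{Nilpotency class M_nil(n) unbounded} to extract such a blueprint $\mathcal{M}$ of type $(W,S)$. By the principle recorded in the subsection on commutator blueprints of universal type (which is a direct application of \cite[Theorem A]{BiRGD}), Weyl-invariance together with universal type ensure that $\mathcal{M}$ is integrable; pick an RGD-system $\mathcal{D}$ integrating $\mathcal{M}$. By construction of the association from RGD-systems to commutator blueprints, the groups $U_w$ of $\mathcal{D}$ coincide with the combinatorial groups $U_w(\mathcal{M})$, and so have the same nilpotency classes. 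Since the latter are unbounded in $w$, we obtain $\mathrm{ndeg}(\mathcal{D}) = \infty$, as required.

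The entirety of the mathematical difficulty accordingly sits inside Theorem \ref{Theorem: M_nil(n)}\ref{Nilpotency class M_nil(n) unbounded}, which is where I expect the main obstacle. One needs to design commutation data on the root system of $(W,S)$ so that iterated commutators of arbitrarily large depth remain non-trivial inside the associated $U_w$, while simultaneously respecting Weyl-invariance and the internal consistency axioms of a commutator blueprint. A natural approach is to construct $\mathcal{M}$ as a limit or amalgam, across the Cayley graph of $(W,S)$, of the bounded-nilpotency blueprints underlying Theorem \ref{Main theorem: n-nilpotent}, exploiting the third generator available in rank $\geq 3$ to separate layers of deeper and deeper commutators so that no collapse is forced by the blueprint axioms. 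Once such an $\mathcal{M}$ is in hand, the passage from it to Theorem \ref{Main theorem: U_w unbounded} is immediate.
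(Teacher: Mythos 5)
Your proposal is correct and is essentially the paper's own route: the introduction explicitly derives Theorem~\ref{Main theorem: U_w unbounded} by combining Theorem~\ref{Theorem: M_nil(n)}\ref{Nilpotency class M_nil(n) unbounded} (the Weyl-invariant commutator blueprint $\mathcal{M}_{\mathrm{nil}}(\infty)$ of universal type, any rank $\geq 3$, with $U_w$ of unbounded nilpotency class) with the integrability result \cite[Theorem~A]{BiRGD}. Your closing speculation about how $\mathcal{M}_{\mathrm{nil}}(\infty)$ might be built (as a limit or amalgam of the bounded-class blueprints) differs in flavor from the paper's actual construction, which directly encodes the commutation pattern of upper triangular matrices via $(i_1,j_1,i_2,j_2)$-galleries; but that detail lives inside the already-cited Theorem~\ref{Theorem: M_nil(n)} and does not affect the correctness of your deduction of Theorem~\ref{Main theorem: U_w unbounded}.
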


\subsection*{Commutator blueprints of type $\mathbf{(4, 4, 4)}$}

So far, all commutator blueprints mentioned in the introduction are of universal type. We have also constructed commutator blueprints which are not of universal type. Suppose that $(W, S)$ is \emph{of type} $(4, 4, 4)$, that is, $(W, S)$ is of rank $3$ and $o(st) = 4$ for all $s\neq t\in S$. It is a consequence of \cite[Theorem A]{BiRGD} and \cite{BiDiss} that Weyl-invariant commutator blueprints of type $(4, 4, 4)$ are integrable and hence prescribe commutation relations in RGD-systems. This motivates the following theorem:

\begin{thmintro}
	There exist uncountably many Weyl-invariant commutator blueprints of type $(4, 4, 4)$.
\end{thmintro}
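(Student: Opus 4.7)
The plan is to exhibit an injection $T \mapsto \mathcal{M}_T$ from the power set $\mathcal{P}(\NN)$ into the collection of Weyl-invariant commutator blueprints of type $(4,4,4)$. Since $|\mathcal{P}(\NN)| = 2^{\aleph_0}$, this yields the desired uncountable family. The underlying idea is to show that a Weyl-invariant commutator blueprint of type $(4,4,4)$ admits a countable family of genuinely independent binary choices, each corresponding to whether a prescribed commutation relation at a certain prenilpotent pair of roots is trivial or non-trivial.

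The first step is a combinatorial analysis of prenilpotent pairs of roots in the Coxeter system $(W,S)$ of type $(4,4,4)$. Since $W$ is the hyperbolic reflection group of the $(4,4,4)$-triangle, it is infinite, and there are infinitely many $W$-orbits of prenilpotent pairs of roots. The task is to identify a countable family of representatives $\{(\alpha_n, \beta_n)\}_{n \in \NN}$ at which the commutation datum in a Weyl-invariant commutator blueprint may be prescribed independently, in the sense that a change of the commutator expression at $(\alpha_n,\beta_n)$ does not propagate, via the axioms or Weyl-invariance, to force a change at any other $(\alpha_m,\beta_m)$ with $m \neq n$, nor at any of their $W$-translates. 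A natural candidate is a family of pairs enclosing regions of the Davis complex that are sufficiently far apart in the $W$-action and in the rank-$3$ residue structure.

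The second step is, for each $T \subseteq \NN$, to construct $\mathcal{M}_T$ by prescribing at the $W$-orbit of $(\alpha_n, \beta_n)$ a fixed non-trivial commutator expression when $n \in T$, and the trivial one when $n \notin T$, extending by Weyl-invariance to the full set of prenilpotent pairs. The machinery of \cite{BiRGD} and \cite{BiDiss} is then invoked to verify the axioms of a commutator blueprint: the rank-$2$ consistency conditions on residues of type $I_2(4)$, and the rank-$3$ compatibility conditions coming from the full Coxeter system. Granted the independence established in step one, the verification is uniform in $T$. Finally, to distinguish $\mathcal{M}_T$ from $\mathcal{M}_{T'}$ for $T \neq T'$, we pick $n \in T \triangle T'$ and observe that the commutation data prescribed at $(\alpha_n, \beta_n)$ by the two blueprints differ.

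The principal obstacle is step one: locating a countable family of truly non-interacting positions. In sharp contrast with the universal type, where all rank-$2$ residues are trivial (infinite dihedral), the $(4,4,4)$ case has rank-$2$ residues of type $I_2(4)$ and non-trivial rank-$3$ interactions, so Weyl-invariance together with the commutator blueprint axioms can in principle couple commutation data at different prenilpotent pairs. The hard part is therefore to exploit the hyperbolic combinatorics of the Davis complex of type $(4,4,4)$ — in particular, the existence of arbitrarily "deep" configurations of pairs with pairwise disjoint $W$-orbits of rank-$3$ interactions — to guarantee enough independent positions. Once this geometric fact is established, the bookkeeping for the verification of the axioms should follow the pattern of the constructions carried out in \cite{BiDiss} for this Coxeter type.
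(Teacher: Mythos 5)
Your proposal sketches a plan whose high-level shape (injecting $\mathcal{P}(\NN)$ into the set of Weyl-invariant blueprints by prescribing commutation data at a countable family of ``independent'' prenilpotent pairs) agrees with what the paper actually does, but the essential content of the argument is absent. You yourself identify the principal obstacle as ``locating a countable family of truly non-interacting positions'' and then do not locate one; this is not a minor verification to defer but is precisely the theorem. In the paper, this step is carried out concretely: the independent positions are the prenilpotent pairs $\{\alpha,\beta\}$ with $o(r_\alpha r_\beta)=\infty$ linked by a minimal gallery of ``type $(n,r)$'' (Definition \ref{Definition: M(n,J)}), and Lemma \ref{Lemma: unique (n,r)} is the nontrivial combinatorial fact guaranteeing that the parameters $(n,r)$ attached to such a pair are uniquely determined, so that independently choosing data for each $n$ (via the parameters $K$, $\mathcal{J}$, $\mathcal{L}$) does not create conflicts. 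Without such a uniqueness statement the family $\{(\alpha_n,\beta_n)\}$ is not constructed and the map $T\mapsto\mathcal{M}_T$ is not defined.

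A second gap: you propose to prescribe ``a fixed non-trivial commutator expression when $n\in T$ and the trivial one when $n\notin T$'' and extend by Weyl-invariance. But a commutator blueprint must assign $M_{\alpha,\beta}^G$ for \emph{all} $(G,\alpha,\beta)\in\mathcal{I}$, and for the finite-order prenilpotent pairs (those contained in a rank-$2$ residue of type $I_2(4)$) the data is essentially forced rather than free: the paper's construction always sets $M_{\alpha,\beta}^G=(\alpha,\beta)$ whenever $o(r_\alpha r_\beta)<\infty$ and $\vert(\alpha,\beta)\vert=2$, and this is a standing hypothesis of Proposition \ref{Proposition: 2-nilpotent}. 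Your plan does not distinguish the forced positions from the free ones, nor address how the forced data interacts with the free choices. Finally, invoking ``the pattern of \cite{BiDiss}'' for the axiom verification is not a proof; in the paper the verification of (CB1)--(CB3) for $\mathcal{M}(K,\mathcal{J},\mathcal{L})$ occupies Lemma \ref{Lemma: M(K,J,L) infinite order coincide} and Lemma \ref{Lemma: 2-nilpotent prec-ommutator blueprint}, reducing to the sufficient conditions of Theorem \ref{Theorem: pre-com = com} via Proposition \ref{Proposition: 2-nilpotent}, and none of that reasoning appears in your proposal.
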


\subsection*{Overview}

In this paper we construct several families of Weyl-invariant commutator blueprints. In Section \ref{Section: Preliminaries} we fix notation and prove some elementary results about Coxeter systems. In Section \ref{Section: commutator blueprints} we recall the definition of commutator blueprints and we introduce the notion of a \emph{pre-commutator blueprint}. These objects are weaker versions of commutator blueprints which only depend on combinatorial properties. In Lemma \ref{Lemma: Conditions to extend U_w} we have worked out precise conditions on the commutation relations which guarantee $\vert U_w \vert = 2^{\ell(w)}$ and, in particular, that a pre-commutator blueprint is already a commutator blueprint. These conditions can be weakened and it turned out that these weaker conditions imply that the groups $U_w$ are nilpotent of class at most $2$ (cf.\ Theorem \ref{Theorem: pre-com = com}). Our construction of commutator blueprints (which is done in Section \ref{Section: Examples}) is always by constructing pre-commutator blueprints, which satisfy the conditions of either Lemma \ref{Lemma: Conditions to extend U_w} or of Theorem \ref{Theorem: pre-com = com}.

\subsection*{Acknowledgement}

I am very grateful to Timothée Marquis and Bernhard M\"uhlherr for many helpful discussions on the topic. This manuscript was partially written while the author was a PhD student at Justus-Liebig-Universität Gie\ss en. This work was partially supported by a fellowship of the German Academic Exchange Service (DAAD) via the grant 57664192.

\section{Preliminaries}\label{Section: Preliminaries}

\subsection*{Coxeter systems}

Let $(W, S)$ be a Coxeter system and let $\ell$ denote the corresponding length function. For $s, t \in S$ we denote the order of $st$ in $W$ by $m_{st}$. The \emph{Coxeter diagram} corresponding to $(W, S)$ is the labeled graph $(S, E(S))$, where $E(S) = \{ \{s, t \} \mid m_{st}>2 \}$ and where each edge $\{s,t\}$ is labeled by $m_{st}$ for all $s, t \in S$. The \emph{rank} of the Coxeter system is the cardinality of the set $S$. Let $(W, S)$ be of rank $3$ and let $S = \{ r, s, t \}$. Sometimes we will also call $(m_{rs}, m_{rt}, m_{st})$ the \emph{type} of $(W, S)$. If $3 \leq m_{rs}, m_{rt}, m_{st}$ and $(m_{rs}, m_{rt}, m_{st}) \neq (3, 3, 3)$, we call $(W, S)$ \emph{cyclic hyperbolic}.

It is well-known that for each $J \subseteq S$ the pair $(\langle J \rangle, J)$ is a Coxeter system (cf.\ \cite[Ch. IV, §$1$ Theorem $2$]{Bo68}). A subset $J \subseteq S$ is called \emph{spherical} if $\langle J \rangle$ is finite. The Coxeter system is called \emph{$2$-spherical} if $\langle J \rangle$ is finite for each $J \subseteq S$ with $\vert J \vert \leq 2$ (i.e.\ $m_{st} < \infty$); it is called \emph{spherical} if $S$ is spherical. Given a spherical subset $J$ of $S$, there exists a unique element of maximal length in $\langle J \rangle$, which we denote by $r_J$ (cf.\ \cite[Corollary $2.19$]{AB08}).

\subsection*{The chamber system $\mathbf{\Sigma(W, S)}$}

Let $(W, S)$ be a Coxeter system. Defining $w \sim_s w'$ if and only if $w^{-1}w' \in \langle s \rangle$ we obtain a chamber system with chamber set $W$ and equivalence relations $\sim_s$ for $s\in S$, which we denote by $\Sigma(W, S)$. We call two chambers $w, w'$ \textit{$s$-adjacent} if $w \sim_s w'$ and \textit{adjacent} if they are $s$-adjacent for some $s\in S$. A \textit{gallery of length $n$} from $w_0$ to $w_n$ is a sequence $(w_0, \ldots, w_n)$ of chambers where $w_i$ and $w_{i+1}$ are adjacent for each $0 \leq i < n$. A gallery $(w_0, \ldots, w_n)$ is called \textit{minimal} if there exists no gallery from $w_0$ to $w_n$ of length $k<n$ and we denote the length of a minimal gallery from $w_0$ to $w_n$ by $\ell(w_0, w_n)$. Let $G = (w_0, \ldots, w_n)$ be a minimal gallery. Then we call $(s_1, \ldots, s_n)$ the \emph{type} of $G$, where $ s_i := w_{i-1}^{-1} w_i \in S$.

For $J \subseteq S$ we define the \textit{$J$-residue} of a chamber $c\in W$ to be the set $R_J(c) := c \langle J \rangle$. A \textit{residue} $R$ is a $J$-residue for some $J \subseteq S$; we call $J$ the \textit{type} of $R$ and the cardinality of $J$ is called the \textit{rank} of $R$. A residue is called \emph{spherical} if its type is a spherical subset of $S$. Let $R$ be a spherical $J$-residue. Two chambers $x, y \in R$ are called \emph{opposite in $R$} if $\delta(x, y) = r_J$. Two residues $P, Q \subseteq R$ are called \emph{opposite in $R$} if for each $p\in P$ there exists $q\in Q$ such that $p, q$ are opposite in $R$. A \textit{panel} is a residue of rank $1$. It is a fact that for every chamber $x\in W$ and every residue $R$ there exists a unique chamber $z\in R$ such that $\ell(x, y) = \ell(x, z) + \ell(z, y)$ holds for every chamber $y\in R$. The chamber $z$ is called the \textit{projection} of $x$ onto $R$ and is denoted by $z = \proj_R x$.

\begin{example}
	The group $W$ acts on $\Sigma(W, S)$ by multiplication from the left (cf.\ \cite[$29.2$]{We09}).
\end{example}

A subset $\Sigma \subseteq W$ is called \emph{convex} if for any two chambers $c, d \in \Sigma$ and any minimal gallery $(c_0 = c, \ldots, c_k = d)$, we have $c_i \in \Sigma$ for all $0 \leq i \leq k$. Note that residues are convex by \cite[Example $5.44(b)$]{AB08}.

For two residues $R$ and $T$ we define $\proj_T R := \{ \proj_T r \mid r\in R \}$. By \cite[Lemma $5.36(2)$]{AB08} $\proj_T R$ is a residue contained in $T$. The residues $R$ and $T$ are called \emph{parallel} if $\proj_T R = T$ and $\proj_R T = R$.

\begin{lemma}\label{Lemma: parallel implies opposite}
	Let $R$ be a spherical residue of rank $2$ and let $P \neq Q$ be two parallel panels contained in $R$. Then $P$ and $Q$ are opposite in $R$.
\end{lemma}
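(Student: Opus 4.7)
Write $P = \{p_1, p_2\}$ and $Q = \{q_1, q_2\}$, let $J = \{s,t\}$ be the type of $R$, and let $m = m_{st}$, so that the chamber graph of $R$ is a $2m$-cycle and $r_J$ has length $m$. My plan is to extract opposition from the parallelism data in three stages, using only the gate property $\ell(x,y) = \ell(x, \proj_T x) + \ell(\proj_T x, y)$ for $y \in T$ together with the rank-$2$ spherical geometry of $R$. Parallelism immediately forces the restrictions $\proj_Q|_P : P \to Q$ and $\proj_P|_Q : Q \to P$ to be bijections (a surjection between two-element sets is a bijection); after relabeling I may assume $\proj_Q p_i = q_i$. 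Moreover $P \cap Q = \emptyset$: if $P$ and $Q$ shared a chamber $p$, then the other chamber of $P$ would be forced (by well-definedness of projection, ruling out the configuration where two chambers of $Q$ are equidistant to it) to project back to $p$, violating the bijectivity just established.

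Next, I would rule out the ``crossed'' matching $\proj_P q_1 = p_2$, $\proj_P q_2 = p_1$, leaving only the ``aligned'' matching $\proj_P q_i = p_i$. Setting $d := \ell(p_1, q_1)$ and iterating the gate property, the crossed case produces two incompatible expressions for $\ell(p_2, q_2)$, namely $d - 2$ and $d + 2$, which is absurd; the aligned case instead gives the consistent picture
\[ \ell(p_i, q_i) = d, \qquad \ell(p_i, q_{3-i}) = d + 1 \qquad (i = 1, 2). \]

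Finally, I would show $d + 1 = m$, and this is where the rank-$2$ spherical structure does the real work. Since the chamber graph of $R$ is a $2m$-cycle, any two chambers at distance strictly less than $m$ are joined by a \emph{unique} minimal gallery. Assume $d + 1 < m$ for contradiction. Then the concatenation of the $Q$-edge $q_1 \to q_2$ with a minimal gallery $q_2 \to p_2$ of length $d$ is a minimal gallery from $q_1$ to $p_2$ (its length is $d+1 = \ell(q_1, p_2)$), while $\proj_P q_1 = p_1$ provides another such minimal gallery $q_1 \to p_1 \to p_2$ ending with the $P$-edge. Uniqueness identifies these two galleries: the common gallery begins with the $Q$-edge and ends with the $P$-edge, so truncating at $p_1$ yields a minimal gallery from $q_1$ to $p_1$ passing through $q_2$, whence $\ell(q_2, p_1) = d - 1$. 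This contradicts $\ell(p_1, q_2) = d + 1$ (and $d \geq 1$, thanks to $P \cap Q = \emptyset$). Therefore $d + 1 = m$; the equalities $\ell(p_i, q_{3-i}) = m$ then identify $q_{3-i}$ as the unique chamber of $R$ opposite $p_i$, and $P$ and $Q$ are opposite in $R$. The main obstacle is this last stage: extracting the \emph{sharp} equality $d+1=m$ from projection data alone, with only the cyclic combinatorics of the $2m$-gon available as leverage via uniqueness of non-antipodal minimal galleries; the earlier stages are routine manipulations of the gate property.
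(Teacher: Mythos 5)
Your proof is correct, and it takes a genuinely different route from the paper. The paper disposes of this lemma in one line by citing \cite[Lemma~18]{DMVM11} together with \cite[Lemma~$5.107$]{AB08}; you instead give a self-contained argument inside the $2m$-gon. Each approach has something to recommend it: the paper's is shorter and delegates to known structure theory of parallel residues, while yours isolates precisely where rank~$2$ and sphericity enter --- namely through the fact that in a $2m$-cycle a pair of chambers at distance $<m$ has a \emph{unique} minimal gallery, and it is this uniqueness that upgrades the soft projection bookkeeping to the sharp equality $d+1=m$. Your stages 1 and 2 (bijectivity of the projection maps, $P\cap Q=\emptyset$, ruling out the crossed matching by playing the gate property against itself) are clean and correct; the crossed case indeed gives the contradictory pair $\ell(p_2,q_2)=d-2$ and $\ell(p_2,q_2)=d+2$. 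One implicit step worth naming: when you pass between minimal galleries in $\Sigma(W,S)$ and shortest paths in the $2m$-cycle, you are using that residues are convex (noted in the paper's preliminaries), so that minimal galleries between chambers of $R$ do not leave $R$; with that made explicit, the argument is complete. In particular $d\geq 1$ (since $P\cap Q=\emptyset$) ensures that $q_2$ and $p_1$ are genuine interior/endpoint chambers of the two competing galleries, so the uniqueness comparison really does force $\ell(q_2,p_1)=d-1$, contradicting $\ell(p_1,q_2)=d+1$.
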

\begin{proof}
	This is a consequence of \cite[Lemma $18$]{DMVM11} and \cite[Lemma $5.107$]{AB08}.
\end{proof}

\subsection*{Roots and walls}

Let $(W, S)$ be a Coxeter system. A \textit{reflection} is an element of $W$ that is conjugate to an element of $S$. For $s\in S$ we let $\alpha_s := \{ w\in W \mid \ell(sw) > \ell(w) \}$ be the \textit{simple root} corresponding to $s$. A \textit{root} is a subset $\alpha \subseteq W$ such that $\alpha = v\alpha_s$ for some $v\in W$ and $s\in S$. We denote the set of all roots by $\Phi(W, S)$. We note that roots are convex (cf.\ \cite[Proposition $5.81$]{AB08}). The set $\Phi(W, S)_+ := \{ \alpha \in \Phi(W, S) \mid 1_W \in \alpha \}$ is the set of all \textit{positive roots} and $\Phi(W, S)_- := \{ \alpha \in \Phi(W, S) \mid 1_W \notin \alpha \}$ is the set of all \textit{negative roots}. For each root $\alpha \in \Phi(W, S)$ we denote its \textit{opposite root} by $-\alpha$ and we denote the unique reflection which interchanges these two roots by $r_{\alpha}$. For $\alpha \in \Phi(W, S)$ we denote by $\partial \alpha$ (resp.\ $\partial^2 \alpha$) the set of all panels (resp.\ spherical residues of rank $2$) stabilized by $r_{\alpha}$. Furthermore, we define $\mathcal{C}(\partial \alpha) := \bigcup_{P \in \partial \alpha} P$ and $\mathcal{C}(\partial^2 \alpha) := \bigcup_{R \in \partial^2 \alpha} R$. The set $\partial \alpha$ is called the \textit{wall} associated with $\alpha$. Let $G = (c_0, \ldots, c_k)$ be a gallery. We say that $G$ \textit{crosses the wall $\partial \alpha$} if there exists $1 \leq i \leq k$ such that $\{ c_{i-1}, c_i \} \in \partial \alpha$. It is a basic fact that a minimal gallery crosses a wall at most once (cf.\ \cite[Lemma $3.69$]{AB08}).

\begin{convention}
	For the rest of this paper we let $(W, S)$ be a Coxeter system of finite rank and we define $\Phi := \Phi(W, S)$ (resp.\ $\Phi_+ := \Phi(W, S)_+$ and $\Phi_- := \Phi(W, S)_-$). Moreover, we assume that $m_{st} \in \{2, 3, 4, 6, 8, \infty\}$ for all $s\neq t \in S$.
\end{convention}

A pair $\{ \alpha, \beta \} \subseteq \Phi$ of roots is called \emph{prenilpotent}, if $\alpha \cap \beta \neq \emptyset \neq (-\alpha) \cap (-\beta)$. For a prenilpotent pair $\{ \alpha, \beta \}$ of roots we will write $\left[ \alpha, \beta \right] := \{ \gamma \in \Phi \mid \alpha \cap \beta \subseteq \gamma \text{ and } (-\alpha) \cap (-\beta) \subseteq (-\gamma) \}$ and $(\alpha, \beta) := \left[ \alpha, \beta \right] \backslash \{ \alpha, \beta \}$. A pair $\{ \alpha, \beta \}$ of roots is called \emph{nested} if $\alpha \subseteq \beta$ or $\beta \subseteq \alpha$.

Let $(c_0, \ldots, c_k)$ and $(d_0 = c_0, \ldots, d_k = c_k)$ be two minimal galleries from $c_0$ to $c_k$ and let $\alpha \in \Phi$. Then $\partial \alpha$ is crossed by the minimal gallery $(c_0, \ldots, c_k)$ if and only if it is crossed by the minimal gallery $(d_0, \ldots, d_k)$. For $\alpha_1, \ldots, \alpha_k \in \Phi$ we say that a minimal gallery $G = (c_0, \ldots, c_k)$ \textit{crosses the sequence of roots} $(\alpha_1, \ldots, \alpha_k)$, if $c_{i-1} \in \alpha_i$ and $c_i \notin \alpha_i$ for all $1\leq i \leq k$. In this case we say that $G$ is a minimal gallery \emph{between} $\alpha_1$ and $\alpha_k$.

We denote the set of all minimal galleries $(c_0 = 1_W, \ldots, c_k)$ starting at $1_W$ by $\mathrm{Min}$. For $w\in W$ we denote the set of all $G \in \mathrm{Min}$ of type $(s_1, \ldots, s_k)$ with $w = s_1 \cdots s_k$ by $\mathrm{Min}(w)$. For $w\in W$ with $\ell(sw) = \ell(w) -1$ we let $\mathrm{Min}_s(w)$ be the set of all $G \in \mathrm{Min}(w)$ of type $(s, s_2, \ldots, s_k)$. We extend this notion to the case $\ell(sw) = \ell(w) +1$ by defining $\mathrm{Min}_s(w) := \mathrm{Min}(w)$. Let $w\in W, s\in S$ and $G = (c_0, \ldots, c_k) \in \mathrm{Min}_s(w)$. If $\ell(sw) = \ell(w) -1$, then $c_1 = s$ and we define $sG := (sc_1 = 1_W, \ldots, sc_k) \in \mathrm{Min}(sw)$. If $\ell(sw) = \ell(w) +1$, we define $sG := (1_W, sc_0 = s, \ldots, sc_k) \in \mathrm{Min}(sw)$.

\begin{lemma}\label{Lemma: residue and root}
	Let $R$ be a spherical residue of $\Sigma(W, S)$ of rank $2$ and let $\alpha \in \Phi$. Then exactly one of the following hold:
	\begin{enumerate}[label=(\alph*)]
		\item $R \subseteq \alpha$;
		
		\item $R \subseteq (-\alpha)$;
		
		\item $R \in \partial^2 \alpha$;
	\end{enumerate}
\end{lemma}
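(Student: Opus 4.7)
The plan is to establish the trichotomy by separately checking mutual exclusivity and existence. For mutual exclusivity, I would first note that (a) and (b) cannot both hold since $R \neq \emptyset$ and $\alpha \cap (-\alpha) = \emptyset$. If (c) held together with (a), then applying $r_\alpha$ would give $R = r_\alpha R \subseteq r_\alpha \alpha = -\alpha$, contradicting (a); the same argument rules out (b) and (c) holding simultaneously. So at most one of the three alternatives can occur.

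For existence, I would assume that neither (a) nor (b) holds and deduce (c). Under this assumption there exist chambers $c \in R \cap \alpha$ and $d \in R \cap (-\alpha)$. Using the convexity of $R$, choose a minimal gallery $(c = c_0, c_1, \ldots, c_n = d)$ contained entirely in $R$. Since $c_0 \in \alpha$ but $c_n \notin \alpha$, there is some index $i$ with $c_{i-1} \in \alpha$ and $c_i \notin \alpha$. The panel $P$ containing the two adjacent chambers $c_{i-1}, c_i$ then lies in $\partial \alpha$, and by the convexity of residues (panels being rank-$1$ residues) it satisfies $P \subseteq R$.

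To conclude $R \in \partial^2 \alpha$, it remains to show that $r_\alpha R = R$. Because $r_\alpha$ swaps the two chambers of $P \in \partial \alpha$, we have $r_\alpha c_{i-1} = c_i \in R$. The left multiplication action of $W$ on $\Sigma(W, S)$ preserves Weyl distance and hence residues together with their types, so $r_\alpha R$ is another residue of the same type $J$ as $R$; since it contains the chamber $c_i \in R$, and since a $J$-residue is uniquely determined by any of its chambers (it is a left coset of $\langle J \rangle$), we obtain $r_\alpha R = R$, i.e.\ (c) holds.

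There is no real obstacle: the argument is a standard trichotomy, whose only non-cosmetic ingredients are the convexity of roots and residues, the characterisation of $\partial \alpha$ as the set of panels swapped by $r_\alpha$, and the fact that the $W$-action on $\Sigma(W, S)$ is type-preserving. All of these are recalled in the preliminaries, so the proof should be very short.
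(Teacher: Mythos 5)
Your proof is correct and is essentially the standard argument for this trichotomy: mutual exclusivity from $r_\alpha$ stabilizing $R$ while swapping $\alpha$ and $-\alpha$, and existence via a minimal gallery inside $R$ crossing the wall, whose crossing panel forces $r_\alpha R = R$ because left multiplication is type-preserving and $J$-residues are determined by any of their chambers. Note that the paper itself gives no proof for this lemma---it simply cites \cite[Lemma~2.2]{BiCoxGrowth}---so there is nothing in the paper to compare against; your argument is the natural self-contained one (one small remark: the inclusion $P \subseteq R$ follows immediately from $c_{i-1}, c_i \in R$ and $|P| = 2$ in $\Sigma(W,S)$, rather than from any convexity consideration).
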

\begin{proof}
	This is \cite[Lemma $2.2$]{BiCoxGrowth}.
\end{proof}

\begin{lemma}\label{CM06Prop2.7}
	Let $R$ and $T$ be two spherical residues of $\Sigma(W, S)$. Then the following are equivalent
	\begin{enumerate}[label=(\roman*)]
		\item $R$ and $T$ are parallel;
		
		\item a reflection of $\Sigma(W, S)$ stabilizes $R$ if and only if it stabilizes $T$;
		
		\item there exist two sequences $R_0 = R, \ldots, R_n = T$ and $T_1, \ldots, T_n$ of residues of spherical type such that for each $1 \leq i \leq n$ the rank of $T_i$ is equal to $1+\mathrm{rank}(R)$, the residues $R_{i-1}, R_i$ are contained and opposite in $T_i$ and moreover, we have $\proj_{T_i} R = R_{i-1}$ and $\proj_{T_i} T = R_i$.
	\end{enumerate}
\end{lemma}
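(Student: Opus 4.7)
The plan is to prove the equivalence via the cyclic chain (iii) $\Rightarrow$ (i) $\Rightarrow$ (ii) $\Rightarrow$ (iii). Throughout, I would rely on two standing facts: a reflection $r_\alpha$ stabilizes a spherical residue $S$ if and only if $S$ contains a panel on $\partial\alpha$, and the projection onto a spherical residue is an idempotent map whose image is convex. Lemma \ref{Lemma: residue and root} and Lemma \ref{Lemma: parallel implies opposite} will be invoked repeatedly.

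For (iii) $\Rightarrow$ (i), I would induct on $n$ and reduce to the single step: two residues $R_0, R_1$ of type $J$ contained and opposite in a spherical residue $T_1$ of type $J \cup \{s\}$ must be parallel. Standard spherical building theory gives an involutive bijection between $R_0$ and $R_1$ sending each chamber to its opposite in $T_1$; a short argument identifies this opposite with $\proj_{R_1} x$ for $x \in R_0$, yielding $\proj_{R_1} R_0 = R_1$ and symmetrically. The general case follows from transitivity of parallelism, which is a routine projection chase using convexity of residues.

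For (i) $\Rightarrow$ (ii), suppose $R \parallel T$, so the restriction of $\proj_T$ to $R$ is a bijection onto $T$. If $r_\alpha$ stabilizes $R$, then $R$ contains a panel $P \in \partial\alpha$ with chambers $c, c'$; set $d := \proj_T c$ and $d' := \proj_T c'$. Parallelism forces $d, d'$ to be adjacent across a panel $Q \subseteq T$ of the same type as $P$, and the two minimal galleries $c \to d$ and $c' \to d'$, together with the rule that a minimal gallery crosses each wall at most once, force $Q \in \partial\alpha$. Hence $r_\alpha$ stabilizes $T$; the converse is symmetric.

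The main obstacle is (ii) $\Rightarrow$ (iii), which I would settle by induction on $d(R, T) := \min\{\ell(x, y) \mid x \in R, y \in T\}$. The base case $d(R, T) = 0$ gives a common chamber $c$; since the stabilizer of $c\langle J \rangle$ is generated by the reflections $\{c s c^{-1} \mid s \in J\}$, condition (ii) forces the types of $R$ and $T$ to agree, and hence $R = T$, so the empty sequence works. For the inductive step, pick $c \in R$ and $d \in T$ realizing $d(R, T)$, let $s$ be the first letter of a minimal gallery from $c$ to $d$ (necessarily not in the type $J$ of $R$), and set $T_1 := c\langle J \cup \{s\}\rangle$ with $R_1$ the $J$-residue in $T_1$ opposite to $R_0 := R$. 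The delicate step is the sphericity of $J \cup \{s\}$, which I would derive from (ii) by arguing that a non-spherical extension would produce a reflection in $\langle J \cup \{s\}\rangle$ stabilizing one of $R, T$ but not the other. Lemma \ref{Lemma: parallel implies opposite} then ensures that $(R_0, R_1, T_1)$ has the required form, and one verifies that $R_1$ still satisfies (ii) with $T$ while $d(R_1, T) < d(R, T)$, so that the induction hypothesis supplies the remaining sequence. I expect the sphericity of $J \cup \{s\}$ together with the strict decrease of $d(R_1, T)$ to be the genuine technical hurdles.
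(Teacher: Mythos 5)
The paper does not actually prove this lemma; it only cites \cite[Proposition 2.7]{CM06}, so there is no internal argument to compare your proposal against. Assessing your sketch on its own terms, the overall cyclic structure and the inductions are a reasonable outline, but there are two real problems. First, in (iii)~$\Rightarrow$~(i) you invoke ``transitivity of parallelism,'' which in a Coxeter complex is essentially the equivalence (i)~$\Leftrightarrow$~(ii) that you have not yet established; you should instead use the gate conditions $\proj_{T_i}R=R_{i-1}$, $\proj_{T_i}T=R_i$ built into (iii) to compose projections step by step (these give $\proj_{R_i}x=\proj_{R_i}(\proj_{T_i}x)=\proj_{R_i}(\proj_{R_{i-1}}x)$ for $x\in R$, which is exactly what the induction needs). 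Second, and more seriously, your argument for the sphericity of $J\cup\{s\}$ does not hold up. The reflections in $c\langle J\cup\{s\}\rangle c^{-1}$ that stabilize $R$ are precisely those in $c\langle J\rangle c^{-1}$, whether or not $J\cup\{s\}$ is spherical, and by hypothesis (ii) each of them already stabilizes $T$; so hypothesis (ii) produces no offending reflection, and your claimed contradiction never materializes. What one actually has to show is something like: the wall $\partial\alpha$ between $c$ and $cs$ separates $R\subseteq\alpha$ from $T\subseteq-\alpha$, and for every root $\beta$ whose wall cuts $R$ the pair $\{\alpha,\beta\}$ is prenilpotent but not nested (hence $o(r_\alpha r_\beta)<\infty$), and then one needs a nontrivial result about when a spherical parabolic $\langle J\rangle$ together with one more generator $s$ stays finite. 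That step is the crux of the lemma and your sketch does not address it.

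You could also lighten the load considerably by reorganizing. The implication (ii)~$\Rightarrow$~(i) has a short direct proof that avoids sphericity entirely: if $c\in R$, $d:=\proj_T c$ and $\proj_R d\neq c$, then the first step of a minimal gallery from $c$ towards $d$ stays inside $R$ and crosses some wall $\partial\alpha$ cutting $R$; by (ii) this wall also cuts $T$, so there is $t\in T$ on the same side as $c$, and then the minimal gallery $c\to d\to t$ (minimal since $d=\proj_T c$) crosses $\partial\alpha$ twice, a contradiction. With (i)~$\Leftrightarrow$~(ii) in hand, the remaining work is (i)~$\Rightarrow$~(iii), which is still where the sphericity of $J\cup\{s\}$ must be established, but one now has both parallelism and the equality of stabilizing reflection sets available as tools.
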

\begin{proof}
	This is \cite[Proposition $2.7$]{CM06}.
\end{proof}

\begin{lemma}\label{CM05Lem2.3}
	Let $\alpha \in \Phi$ be a root and let $x, y \in \alpha \cap \mathcal{C}(\partial \alpha)$. Then there exists a minimal gallery $(c_0 = x, \ldots, c_k = y)$ such that $c_i \in \mathcal{C}(\partial^2 \alpha)$ for each $0 \leq i \leq k$. Moreover, for every $1 \leq i \leq k$ there exists $L_i \in \partial^2 \alpha$ with $\{ c_{i-1}, c_i \} \subseteq L_i$.
\end{lemma}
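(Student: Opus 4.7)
My plan is to apply Lemma \ref{CM06Prop2.7} to the panels of $\partial \alpha$ containing $x$ and $y$, and then assemble the desired gallery by concatenating minimal galleries inside the rank-$2$ residues of the resulting chain.

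Let $P$ and $Q$ be the panels of $\partial \alpha$ containing $x$ and $y$, respectively. Since the only non-trivial reflection stabilizing a given panel is the unique reflection swapping its two chambers, both $P$ and $Q$ are stabilized precisely by $r_\alpha$; by Lemma \ref{CM06Prop2.7}(ii) they are therefore parallel. Apply Lemma \ref{CM06Prop2.7}(iii) to obtain sequences of panels $R_0 = P, R_1, \ldots, R_n = Q$ and rank-$2$ spherical residues $T_1, \ldots, T_n$ with $R_{i-1}, R_i$ opposite in $T_i$. Opposite panels in a spherical rank-$2$ residue share their stabilizing reflection, so they are parallel; by transitivity each $R_i$ is parallel to $P$, hence stabilized by $r_\alpha$, so $R_i \in \partial \alpha$. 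Moreover, since the $r_\alpha$-stable panel $R_{i-1}$ is contained in $T_i$, the image $r_\alpha T_i$ is a residue of the same type as $T_i$ containing $R_{i-1}$; uniqueness forces $r_\alpha T_i = T_i$, so $T_i \in \partial^2 \alpha$.

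For $0 \le i \le n$, let $x_i$ denote the unique chamber of $R_i$ lying in $\alpha$; then $x_0 = x$, $x_n = y$, and both $x_{i-1}$ and $x_i$ lie in the convex set $T_i \cap \alpha$. Choose a minimal gallery $\Gamma_i$ from $x_{i-1}$ to $x_i$ inside $T_i$. Every chamber of $\Gamma_i$ lies in $T_i \in \partial^2 \alpha$, and every consecutive pair of chambers of $\Gamma_i$ is contained in $L_i := T_i$. The concatenation $\Gamma$ of $\Gamma_1, \ldots, \Gamma_n$ is then a gallery from $x$ to $y$ satisfying both displayed properties, provided $\Gamma$ is minimal. (The degenerate case $n = 0$ corresponds to $P = Q$, forcing $x = y$; the gallery is $(x)$, and $x \in \mathcal{C}(\partial^2 \alpha)$ as soon as there is some $s' \neq s_P$ with $m_{s_P s'} < \infty$, via the $\{s_P, s'\}$-residue at $x$, where $s_P$ is the type of $P$.)

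The main obstacle is the minimality of $\Gamma$. Its length equals $\sum_{i=1}^n (m_i - 1)$, where $m_i$ is the diameter of $T_i$, and the triangle inequality gives $\ell(x,y) \le \sum_i (m_i - 1)$; so the task is the reverse inequality. I would establish this by a wall-counting argument: for each $i$, each of the $m_i - 1$ reflections that stabilize $T_i$ and differ from $r_\alpha$ has a wall crossing $\partial \alpha$ transversally at $T_i$ and therefore separates $x$ from $y$; moreover, no reflection $r_\beta \neq r_\alpha$ can stabilize two of the residues $T_i, T_j$ for $i \neq j$, since this would yield an alternative parallel chain between $R_{i-1}$ and $R_j$ contradicting the chain supplied by Lemma \ref{CM06Prop2.7}(iii). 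This produces $\sum_i (m_i - 1)$ pairwise distinct walls separating $x$ from $y$, giving $\ell(x,y) \ge \sum_i (m_i - 1)$, and hence $\Gamma$ is minimal.
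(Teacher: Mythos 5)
Your overall strategy is sound and almost certainly mirrors the cited source: pass to the panels $P, Q \in \partial\alpha$ through $x$ and $y$, invoke Lemma \ref{CM06Prop2.7} to obtain the chain $R_0 = P, \ldots, R_n = Q$ and $T_1, \ldots, T_n$, verify $R_i \in \partial\alpha$ and $T_i \in \partial^2\alpha$, and concatenate minimal galleries $\Gamma_i$ from $x_{i-1}$ to $x_i$ inside each $T_i$. All of that checks out, including the verification that $T_i$ is $r_\alpha$-stable via uniqueness of residues of a given type through a chamber.

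The gap is in your proof of minimality. You assert that no reflection $r_\beta \neq r_\alpha$ can stabilize two of the $T_i$'s, with the justification that this ``would yield an alternative parallel chain \ldots contradicting the chain supplied by Lemma~\ref{CM06Prop2.7}(iii).'' But Lemma~\ref{CM06Prop2.7}(iii) is an existence statement, not a uniqueness statement, so the existence of another chain is not, by itself, a contradiction. Moreover, the paper's own Lemma~\ref{Lemma: residues in the 2-boundary are parallel}(b) only guarantees $|\partial^2\alpha \cap \partial^2\beta| \le 1$ under the extra hypothesis that all rank-$3$ standard parabolics are infinite; without that hypothesis, two distinct residues of $\partial^2\alpha$ can indeed share a second stabilizing reflection, so your claim needs a real argument (and its current form would not survive in a setting such as a finite rank-$3$ parabolic). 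A cleaner route, which sidesteps the wall-counting entirely, is to use the projection data already built into Lemma~\ref{CM06Prop2.7}(iii): from $\proj_{T_i} P = R_{i-1}$ and the convexity of $\alpha$ one gets $\proj_{T_i}(x) = x_{i-1}$, and then the gate property gives $\ell(x, x_i) = \ell(x, x_{i-1}) + \ell(x_{i-1}, x_i)$; iterating yields $\ell(x, y) = \sum_{i=1}^n \ell(x_{i-1}, x_i)$, which is exactly the length of your concatenated gallery $\Gamma$, so $\Gamma$ is minimal. I recommend replacing the wall-counting paragraph with this projection argument.

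Two minor points. First, your formula for the length of $\Gamma_i$ implicitly uses that $x_i$ is \emph{not} the chamber opposite $x_{i-1}$ in $T_i$; this holds because the opposite chamber lies in $-\alpha$, whereas $x_i \in \alpha$, so $\ell(x_{i-1}, x_i) = m_i - 1$ rather than $m_i$ --- worth a sentence. Second, in the degenerate case $n = 0$ the conclusion $x \in \mathcal{C}(\partial^2\alpha)$ genuinely requires $\partial^2\alpha \neq \emptyset$, which you correctly flag; this is automatic in every setting in which the lemma is actually applied in the paper, but it is good that you noted it.
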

\begin{proof}
	This is a consequence of \cite[Lemma $2.3$]{CM05} and its proof.
\end{proof}

\begin{lemma}\label{Lemma: residues in the 2-boundary are parallel}
	Let $\alpha, \beta \in \Phi, \alpha \neq \pm \beta$ be two roots and let $R, T \in \partial^2 \alpha \cap \partial^2 \beta$.
	\begin{enumerate}[label=(\alph*)]
		\item The residues $R$ and $T$ are parallel.
		
		\item If $\vert \langle J \rangle \vert = \infty$ holds for all $J \subseteq S$ containing three elements, then $R=T$.
	\end{enumerate}
\end{lemma}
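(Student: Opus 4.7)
The plan is to derive both parts from the parallelism criterion of Lemma \ref{CM06Prop2.7}. For (a) I would verify condition (i) directly. The key observation is that every element of $W$ which stabilises a residue $T$ commutes with the projection map $\proj_T$, because it acts as an isometry of $\Sigma(W,S)$ preserving $T$ setwise. Applied to $r_\alpha$ and $r_\beta$, which by hypothesis stabilise both $R$ and $T$, this shows that $\proj_T R$ --- a residue of $T$ by \cite[Lemma $5.36(2)$]{AB08} --- is itself stabilised by $r_\alpha$ and $r_\beta$.

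A single chamber is never fixed by a reflection, and a panel $P$ is stabilised by exactly one reflection, namely the unique one swapping its two chambers. Since $r_\alpha\neq r_\beta$ both stabilise $\proj_T R$, this subresidue of $T$ cannot have rank $0$ or $1$, so it must have rank $2$, and therefore $\proj_T R = T$. The symmetric argument yields $\proj_R T = R$, so $R$ and $T$ are parallel.

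For (b) I would apply characterisation (iii) of Lemma \ref{CM06Prop2.7} to the parallel pair $(R,T)$ from part (a). If $R \neq T$, the sequence provided has length $n\geq 1$, and then $T_1$ is a residue of spherical type whose rank equals $1 + \mathrm{rank}(R) = 3$. Its type $J \subseteq S$ therefore satisfies $|J|=3$ with $\langle J\rangle$ finite, contradicting the standing assumption. Hence $n=0$, i.e.\ $R=T$.

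I expect the only mildly subtle step to be the first part of (a), namely ruling out that $\proj_T R$ is a chamber or a panel; but this should follow from nothing more than $r_\alpha \neq r_\beta$ combined with the elementary description of reflections on panels recalled above. Everything else is a straightforward application of the referenced lemmas.
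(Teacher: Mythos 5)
Your proof is correct and self-contained. The paper itself simply refers to \cite[Lemma $2.5$]{BiCoxGrowth}, so there is no in-text argument to compare against; what you supply is a clean standalone derivation.

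For (a), the chain of observations holds up: $W$-equivariance of $\proj_T$ under any $g$ stabilising $T$ gives $g(\proj_T R)=\proj_T(gR)=\proj_T R$, so $\proj_T R$ is a residue of $T$ stabilised by both $r_\alpha$ and $r_\beta$. Since $W$ acts freely on chambers, no reflection fixes a chamber, and a panel $\{w,ws\}$ is stabilised by the single reflection $wsw^{-1}$; as $\alpha\neq\pm\beta$ forces $r_\alpha\neq r_\beta$, the subresidue $\proj_T R$ cannot have rank $0$ or $1$, hence equals $T$. The symmetric argument gives $\proj_R T=R$. For (b), parallelism plus Lemma \ref{CM06Prop2.7}(iii) produces, whenever $R\neq T$, a spherical residue $T_1$ of rank $1+\mathrm{rank}(R)=3$, which is precisely what the hypothesis forbids. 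Both steps are airtight. One small stylistic remark: you announce that (a) will be derived from ``the parallelism criterion of Lemma \ref{CM06Prop2.7}'' and then ``verify condition (i) directly,'' but condition (i) is just the statement that $R$ and $T$ are parallel, so Lemma \ref{CM06Prop2.7} is not actually invoked for (a) --- you verify parallelism from its definition. That is perfectly fine, just slightly at odds with the framing sentence.
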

\begin{proof}
	This is \cite[Lemma $2.5$]{BiCoxGrowth}.
\end{proof}

\begin{lemma}\label{Lemma: nested infinite order}
	Let $\alpha, \beta \in \Phi, \alpha \neq \pm \beta$, be two roots.
	\begin{enumerate}[label=(\alph*)]
		\item The following are equivalent:
		\begin{enumerate}[label=(\roman*)]
			\item Either $\{ \alpha, \beta \}$ is nested or $\{ -\alpha, \beta \}$ is nested.
			
			\item We have $o(r_{\alpha} r_{\beta}) = \infty$.
			
			\item We have $\partial^2 \alpha \cap \partial^2 \beta = \emptyset$.
		\end{enumerate}
		Moreover, if $o(r_{\alpha} r_{\beta}) = \infty$ and $\{ \alpha, \beta \}$ is a prenilpotent pair, then $\{ \alpha, \beta \}$ is nested.
		
		\item If $o(r_{\alpha} r_{\beta}) < \infty$ and $\gamma \in (\alpha, \beta)$, then $\partial^2 \alpha \cap \partial^2 \beta \cap \partial^2 \gamma \neq \emptyset$ and $o(r_{\alpha} r_{\gamma}), o(r_{\beta} r_{\gamma}) < \infty$.
	\end{enumerate}
\end{lemma}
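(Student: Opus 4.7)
The plan is to prove (a) by the cycle (i) $\Rightarrow$ (ii) $\Rightarrow$ (iii) $\Rightarrow$ (i), deduce the moreover clause from prenilpotency, and derive (b) from (a) by restricting to a common spherical rank-$2$ residue. The easy directions come first. For (i) $\Rightarrow$ (ii), I assume without loss of generality $\alpha \subsetneq \beta$; then $r_\beta(\alpha) \subsetneq r_\beta(\beta) = -\beta \subsetneq -\alpha$, so $r_\alpha r_\beta(\alpha) \subsetneq r_\alpha(-\alpha) = \alpha$, and iterating yields a strictly descending chain $\bigl((r_\alpha r_\beta)^k \alpha\bigr)_{k \geq 0}$ of roots. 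This forces $(r_\alpha r_\beta)^k \neq \mathrm{id}$ for all $k \geq 1$, hence $o(r_\alpha r_\beta) = \infty$. For (ii) $\Rightarrow$ (iii), I argue contrapositively: if $R \in \partial^2 \alpha \cap \partial^2 \beta$, then writing $R = c\langle J \rangle$ with $J$ spherical of size $2$, the stabilizer $\mathrm{Stab}_W(R) = c\langle J \rangle c^{-1}$ is finite and contains both $r_\alpha, r_\beta$, so $\langle r_\alpha, r_\beta \rangle$ is finite.

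The main obstacle is (iii) $\Rightarrow$ (i), which I prove contrapositively. Assuming all four intersections $\epsilon \alpha \cap \epsilon' \beta$ with $\epsilon, \epsilon' \in \{+, -\}$ are non-empty, I construct a spherical rank-$2$ residue stabilized by both $r_\alpha$ and $r_\beta$. The idea is to pick a triple $(c, P_\alpha, P_\beta)$ with $c \in \alpha \cap \beta$, $P_\alpha \in \partial \alpha$ and $P_\beta \in \partial \beta$ minimizing $\ell(c, P_\alpha) + \ell(c, P_\beta)$; a standard minimality argument forces $c \in P_\alpha \cap P_\beta$, and the $\{s,t\}$-residue $R$ through $c$ (where $s, t$ are the types of $P_\alpha, P_\beta$) is then stabilized by $r_\alpha$ and $r_\beta$. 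To exclude the non-spherical case $m_{st} = \infty$, I observe that in an infinite dihedral residue any two distinct walls enclose only three non-empty quadrants (compare the description of $\Phi$ for $D_\infty$), which would contradict the assumption that all four global quadrants are non-empty after projection onto $R$. Once (a) is in hand, the moreover clause is immediate: if $\{-\alpha, \beta\}$ were nested, then either $-\alpha \subseteq \beta$ (giving $(-\alpha) \cap (-\beta) = \emptyset$) or $\beta \subseteq -\alpha$ (giving $\alpha \cap \beta = \emptyset$), both contradicting prenilpotency; hence $\{\alpha, \beta\}$ is the nested pair.

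For (b), by (a) I pick $R \in \partial^2 \alpha \cap \partial^2 \beta$. Since $r_\alpha \neq r_\beta$, the walls $\partial \alpha \cap R$ and $\partial \beta \cap R$ are two distinct diameters of the $2m$-gon $R$ (with $m = o(r_\alpha r_\beta)$), partitioning it into four non-empty regions, so both $R \cap \alpha \cap \beta$ and $R \cap (-\alpha) \cap (-\beta)$ are non-empty. The conditions $\alpha \cap \beta \subseteq \gamma$ and $(-\alpha) \cap (-\beta) \subseteq -\gamma$, which follow from $\gamma \in [\alpha, \beta]$, then yield chambers of $R$ in both $\gamma$ and $-\gamma$; by Lemma \ref{Lemma: residue and root}, $R \in \partial^2 \gamma$, and hence $R \in \partial^2 \alpha \cap \partial^2 \beta \cap \partial^2 \gamma$. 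Finally, applying (a)(iii) $\Rightarrow$ (ii) to the pairs $\{\alpha, \gamma\}$ and $\{\beta, \gamma\}$ (both admissible since $\gamma \neq \pm\alpha, \pm\beta$: the equalities $\gamma = -\alpha$ or $\gamma = -\beta$ would violate prenilpotency of $\{\alpha,\beta\}$) yields $o(r_\alpha r_\gamma), o(r_\beta r_\gamma) < \infty$, completing the proof.
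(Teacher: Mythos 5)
Your proposal follows essentially the same outline as the paper: the cycle (i) $\Rightarrow$ (ii) $\Rightarrow$ (iii) $\Rightarrow$ (i), the moreover clause from prenilpotency, and (b) via a common residue and Lemma \ref{Lemma: residue and root}. The implications (i) $\Rightarrow$ (ii), (ii) $\Rightarrow$ (iii), the moreover clause, and part (b) are all fine; your stabilizer argument for (ii) $\Rightarrow$ (iii) and your direct prenilpotency argument for the moreover clause are cleaner than the paper's citations.

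The one step that needs more care is (iii) $\Rightarrow$ (i). The ``standard minimality argument'' you invoke --- minimizing $\ell(c,P_\alpha)+\ell(c,P_\beta)$ over $c\in\alpha\cap\beta$, $P_\alpha\in\partial\alpha$, $P_\beta\in\partial\beta$, and concluding $c\in P_\alpha\cap P_\beta$ --- does not obviously go through. Replacing $c$ by the first chamber $c'$ on a minimal gallery toward $\proj_{P_\alpha}c$ decreases $\ell(\cdot,P_\alpha)$ by one, but $c'$ may fall out of $\beta$, or $\ell(c',P_\beta)$ may increase by one, so the sum need not decrease strictly and the induction does not close as stated. A way to close the gap within your framework: by non-emptiness of all four quadrants and convexity of $\alpha$, there exist panels $P,P'\in\partial\alpha$ with $P\subseteq\beta$ and $P'\subseteq-\beta$; then Lemma \ref{CM05Lem2.3} provides a minimal gallery $(c_0,\dots,c_k)$ inside $\mathcal{C}(\partial^2\alpha)$ connecting $\alpha\cap P$ to $\alpha\cap P'$ together with residues $L_i\in\partial^2\alpha$ containing $\{c_{i-1},c_i\}$; this gallery starts in $\beta$ and ends in $-\beta$, so some $L_i$ contains chambers of both $\beta$ and $-\beta$, whence $L_i\in\partial^2\alpha\cap\partial^2\beta$ by Lemma \ref{Lemma: residue and root}. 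Also note $\mathcal{C}(\partial^2 \alpha)$ is nonempty here precisely because $m_{st}\neq\infty$ is what you must eventually conclude; but $L_i$ is automatically spherical by definition of $\partial^2\alpha$, so the $D_\infty$ exclusion you sketch is unnecessary in this version. The paper itself defers this step to \cite[Proposition 29.24]{We09} and Lemma \ref{CM06Prop2.7}, so the gap is a matter of rigor rather than a wrong idea.

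One tiny slip in (b): you identify $R$ as a $2m$-gon with $m=o(r_\alpha r_\beta)$; in fact $R$ has type $\{s,t\}$ with $|R| = 2m_{st}$, and $o(r_\alpha r_\beta)$ only divides $m_{st}$. This does not affect the argument, since all you need is that the two distinct reflections give four non-empty quadrants in the spherical residue.
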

\begin{proof}
	The implication $(i) \Rightarrow (ii)$ follows exactly as in \cite[Proposition $3.165$]{AB08}. Now suppose $(ii)$ and assume that there exists $R \in \partial^2 \alpha \cap \partial^2 \beta$. As $R$ is finite, there exists $k \in \NN$ such that $(r_{\alpha} r_{\beta})^k$ fixes a chamber in $R$, i.e.\ $(r_{\alpha} r_{\beta})^k w = (r_{\alpha} r_{\beta})^k(w) = w$ for some $w\in R$. But this implies $(r_{\alpha} r_{\beta})^k = 1$. As $o(r_{\alpha} r_{\beta}) = \infty$, we obtain a contradiction. Now suppose that non of $\{ \alpha, \beta \}, \{ -\alpha, \beta \}$ is nested. Then we have $\alpha \not\subseteq \beta, \beta \not\subseteq \alpha$ and $(-\alpha) \not\subseteq \beta, \beta \not\subseteq (-\alpha)$. This implies that non of $\alpha \cap (-\beta), \beta \cap (-\alpha), (-\alpha) \cap (-\beta), \beta \cap \alpha$ is the empty set. Arguing as in the proof of \cite[Proposition $29.24$]{We09} and using Lemma \ref{CM06Prop2.7}, there exists $R \in \partial^2 \alpha \cap \partial^2 \beta$ and we are done. The second part of $(a)$ follows from $(a)$ and \cite[Lemma $8.42(c)$]{AB08}.
	
	To show $(b)$ we note that by $(a)$ there exists $R \in \partial^2 \alpha \cap \partial^2 \beta$. We deduce $\emptyset \neq R \cap \alpha \cap \beta \subseteq \gamma$ and $\emptyset \neq R \cap (-\alpha) \cap (-\beta) \subseteq (-\gamma)$. If follows from Lemma \ref{Lemma: residue and root} that $R \in \partial^2 \gamma$. In particular, $R \in \partial^2 \alpha \cap \partial^2 \beta \cap \partial^2 \gamma$. We deduce $o(r_{\alpha} r_{\gamma}) < \infty$ and $o(r_{\beta} r_{\gamma}) < \infty$ from $(a)$.
\end{proof}

\begin{lemma}\label{Lemma: residue cut by intersection reflections}
	Let $\alpha, \beta, \gamma \in \Phi$ be three pairwise distinct and pairwise non-opposite roots such that $\partial^2 \alpha \cap \partial^2 \beta \cap \partial^2 \gamma \neq \emptyset$ (e.g.\ $\alpha \neq \pm \beta, o(r_{\alpha} r_{\beta}) < \infty,\gamma \in (\alpha, \beta)$). Then the following hold:
	\begin{enumerate}[label=(\alph*)]
		\item $\partial^2 \alpha \cap \partial^2 \beta = \partial^2 \alpha \cap \partial^2 \gamma$;
		
		\item $\left( (\alpha, \beta) \cup (-\alpha, \beta) \right) \cap \{ \gamma, -\gamma \} \neq \emptyset$.	
	\end{enumerate}
\end{lemma}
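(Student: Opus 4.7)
The plan is to fix a residue $R \in \partial^2\alpha \cap \partial^2\beta \cap \partial^2\gamma$, which exists by hypothesis, and carry out all the geometry inside $R$. For (a), let $R' \in \partial^2\alpha \cap \partial^2\beta$ be arbitrary. Since $\alpha \neq \pm\beta$, Lemma \ref{Lemma: residues in the 2-boundary are parallel}(a) makes $R$ and $R'$ parallel, and Lemma \ref{CM06Prop2.7} then forces $r_\gamma$ to stabilise $R'$ because it stabilises $R$. A non-trivial reflection that stabilises a spherical rank-$2$ residue must actually split it---otherwise the residue and its image under the reflection would be disjoint---so Lemma \ref{Lemma: residue and root} yields $R' \in \partial^2\gamma$. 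This proves $\partial^2\alpha \cap \partial^2\beta \subseteq \partial^2\alpha \cap \partial^2\gamma$, and the symmetric argument gives the reverse inclusion.

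For (b), I would view $R$ as a cycle of $2m$ chambers with $m = m_{st} \geq 3$ (the stabiliser of $R$ contains the three pairwise distinct reflections $r_\alpha, r_\beta, r_\gamma$, and a dihedral group of order $2m$ has only $m$ reflections, so $m \geq 3$). The three walls of $r_\alpha, r_\beta, r_\gamma$ are pairwise distinct in this polygon and partition the $2m$ chambers into six non-empty sectors. Labelling each sector by its sign pattern $(\epsilon_1, \epsilon_2, \epsilon_3) \in \{+, -\}^3$ recording membership in $\alpha, \beta, \gamma$, exactly six of the eight possible patterns occur (opposite sectors carry opposite patterns), so the two patterns that do \emph{not} occur form an antipodal pair. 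A routine case distinction over the four possible such missing antipodal pairs shows that in each case one of the four roots $\pm\gamma$ satisfies---\emph{when restricted to chambers of $R$}---the defining inclusions for lying in either $[\alpha, \beta]$ or $[-\alpha, \beta]$.

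The main obstacle is upgrading these within-$R$ containments to the global containments demanded by the definition of $[\alpha, \beta]$. For this I would prove the following projection principle: for every chamber $c \in W$ and every root $\delta$ with $R \in \partial^2\delta$, one has $c \in \delta$ if and only if $\proj_R c \in \delta$. Indeed, if the two sides disagreed, picking any $d \in \delta \cap R$ on the same side as $c$ would produce a minimal gallery $c \to \proj_R c \to d$ (minimal by the defining property of $\proj_R c$) that crosses $\partial\delta$ at least twice, contradicting the fact that minimal galleries cross each wall at most once. Applying this principle to $\alpha$, $\beta$ and $\gamma$ promotes each within-$R$ inclusion above to a global one and produces, in the four cases respectively, one of $\gamma \in (\alpha, \beta)$, $-\gamma \in (\alpha, \beta)$, $\gamma \in (-\alpha, \beta)$, or $-\gamma \in (-\alpha, \beta)$; the hypothesis $\gamma \neq \pm\alpha, \pm\beta$ keeps the containment inside the open interval rather than at an endpoint.
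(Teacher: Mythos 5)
Your part (a) argument is essentially the paper's: fix $R$ in the triple intersection, take an arbitrary $R' \in \partial^2\alpha \cap \partial^2\beta$, invoke Lemma~\ref{Lemma: residues in the 2-boundary are parallel}(a) to make $R, R'$ parallel and Lemma~\ref{CM06Prop2.7} to transfer stabilization by $r_\gamma$. (Note that $\partial^2\gamma$ is \emph{defined} as the set of spherical rank-$2$ residues stabilized by $r_\gamma$, so the detour through Lemma~\ref{Lemma: residue and root} is superfluous: $r_\gamma$ stabilizing $R'$ already gives $R'\in\partial^2\gamma$.)

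Your part (b) is correct but takes a genuinely different route. The paper argues by contradiction: assuming all four of $\pm\gamma$ lie outside both $(\alpha,\beta)$ and $(-\alpha,\beta)$, it applies a convexity claim four times to produce chambers of $R$ realising all four sign patterns $(\pm,+,\pm)$ for $(\alpha,\beta,\gamma)$, extracts from this two parallel panels $P, Q \in \partial\gamma$ lying inside $\beta\cap R$ on opposite sides of $\alpha$, and then uses Lemma~\ref{Lemma: parallel implies opposite} together with the fact that opposite chambers of a spherical residue cover it by minimal galleries to force $R\subseteq\beta$, contradicting $R\in\partial^2\beta$. You instead argue constructively: the three distinct reflections cut $R$ into six non-empty sectors realising six of the eight sign patterns, the two missing patterns form an antipodal pair, and each of the four possible missing pairs pins down precisely which of $\pm\gamma$ lies in $[\alpha,\beta]$ or $[-\alpha,\beta]$ \emph{locally}, after which your projection principle (a correct consequence of the gate property of $\proj_R$, together with the fact that minimal galleries cross walls at most once) promotes the local containments to global ones, and pairwise distinctness/non-oppositeness of the three roots keeps $\pm\gamma$ out of the endpoints. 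Your version is more constructive and gives strictly more information --- it identifies exactly which of the four alternatives holds, rather than merely showing one must --- at the modest cost of a four-way case check and an auxiliary projection lemma; the paper's version is shorter because it only needs a single contradiction and can lean on the already-proved convexity claim.
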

\begin{proof}
	Let $R \in \partial^2 \alpha \cap \partial^2 \beta \cap \partial^2 \gamma$ be a residue, let $\delta \in \{ \beta, \gamma \}$ and let $T \in \partial^2 \alpha \cap \partial^2 \delta$. It suffices to show that $T \in \partial^2 \alpha \cap \partial^2 \beta \cap \partial^2 \gamma$. Using Lemma \ref{Lemma: residues in the 2-boundary are parallel}, we deduce that $R$ and $T$ are parallel. Then Lemma \ref{CM06Prop2.7} implies that a reflection of $\Sigma(W, S)$ stabilizes $R$ if and only if it stabilizes $T$. As $r_{\alpha}, r_{\beta}, r_{\gamma}$ stabilize $R$, they also stabilize $T$ and assertion $(a)$ follows. Before we show $(b)$ we prove the following claim:
	
	\emph{Claim:} $\gamma \notin (\alpha, \beta) \Rightarrow \alpha \cap \beta \cap (-\gamma) \cap R \neq \emptyset$.
	
	We have $\alpha \cap \beta \not\subseteq \gamma$ or $(-\alpha) \cap (-\beta) \not\subseteq (-\gamma)$. In the first case we have $\alpha \cap \beta \cap (-\gamma) \neq \emptyset$ and, as roots are convex, the claim follows from \cite[Lemma $5.45$]{AB08}. Thus we can assume $(-\alpha) \cap (-\beta) \not\subseteq (-\gamma)$. As roots are convex, \cite[Lemma $5.45$]{AB08} implies $(-\alpha) \cap (-\beta) \cap \gamma \cap R \neq \emptyset$. Let $x$ be contained in this set and let $y \in R$ be opposite to $x$ in $R$. Then $y \in \alpha \cap \beta \cap (-\gamma) \cap R$ and the claim follows.
	
	We are now in the position to prove $(b)$. We assume $(\alpha, \beta) \cap \{ \gamma, -\gamma \} = \emptyset = (-\alpha, \beta) \cap \{ \gamma, -\gamma \}$. By the above we deduce the following:
	\allowdisplaybreaks
	\begin{align*}
		&x \in \alpha \cap \beta \cap (-\gamma) \cap R &&\text{and} &&x' \in \alpha \cap \beta \cap \gamma \cap R, \\
		&y \in (-\alpha) \cap \beta \cap (-\gamma) \cap R &&\text{and} &&y' \in (-\alpha) \cap \beta \cap \gamma \cap R
	\end{align*}
	As residues and roots are convex, there exist $P, Q \in \partial \gamma$ such that $P \subseteq \alpha \cap \beta \cap R$ and $Q \subseteq (-\alpha) \cap \beta \cap R$. As $P \subseteq \alpha$ and $Q \subseteq (-\alpha)$, we have $P \neq Q$ and Lemma \ref{Lemma: parallel implies opposite} implies that there exist $p\in P, q\in Q$ which are opposite in $R$. Using \cite[Proposition $5.4$]{We03}, every chamber in $R$ lies on a minimal gallery from $p$ to $q$. As roots are convex and $p, q \in \beta$, we infer $R \subseteq \beta$, which is a contradiction to $R \in \partial^2 \beta$.
\end{proof}

\begin{lemma}\label{Lemma: interval of roots}
	Suppose $\alpha \neq \pm\beta \in \Phi$ with $o(r_{\alpha} r_{\beta}) < \infty$ and let $\gamma \in (\alpha, \beta)$. Then $\{ -\alpha, \gamma \}$ is a prenilpotent pair and we have $\beta \in (-\alpha, \gamma)$. 
\end{lemma}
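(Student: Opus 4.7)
The plan is to first extract from the hypotheses that the reflections $r_{\alpha}$ and $r_{\gamma}$ have finite product order, which forces $\{\alpha,\gamma\}$ to be un-nested (in particular, $\{-\alpha,\gamma\}$ is prenilpotent), and then to verify the containment conditions defining $(-\alpha,\gamma)$ by a direct set-theoretic chase using the definition of $[\alpha,\beta]$.

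First, I would invoke Lemma \ref{Lemma: nested infinite order}(b) applied to the prenilpotent pair $\{\alpha,\beta\}$ with $\gamma\in(\alpha,\beta)$ and $o(r_{\alpha}r_{\beta})<\infty$. This yields $o(r_{\alpha}r_{\gamma})<\infty$. By the equivalence of (i) and (ii) in Lemma \ref{Lemma: nested infinite order}(a), neither $\{\alpha,\gamma\}$ nor $\{-\alpha,\gamma\}$ is nested. The un-nestedness of $\{\alpha,\gamma\}$ is equivalent to saying $\alpha\not\subseteq\gamma$ and $\gamma\not\subseteq\alpha$, i.e.\ $\alpha\cap(-\gamma)\neq\emptyset$ and $(-\alpha)\cap\gamma\neq\emptyset$. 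This is exactly the prenilpotency of $\{-\alpha,\gamma\}$. Note also that $\beta\neq -\alpha$ (by the hypothesis $\alpha\neq\pm\beta$) and $\beta\neq\gamma$ (since $\gamma\in(\alpha,\beta)=[\alpha,\beta]\setminus\{\alpha,\beta\}$).

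It then remains to verify the two defining containments for $\beta\in[-\alpha,\gamma]$, namely $(-\alpha)\cap\gamma\subseteq\beta$ and $\alpha\cap(-\gamma)\subseteq -\beta$. For the first, pick $x\in(-\alpha)\cap\gamma$. If $x\notin\beta$ then $x\in(-\alpha)\cap(-\beta)\subseteq -\gamma$ by the definition of $\gamma\in[\alpha,\beta]$, contradicting $x\in\gamma$; hence $x\in\beta$. For the second, pick $x\in\alpha\cap(-\gamma)$. If $x\notin -\beta$ then $x\in\alpha\cap\beta\subseteq\gamma$, again by $\gamma\in[\alpha,\beta]$, contradicting $x\in -\gamma$; hence $x\in -\beta$. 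Combined with the previous paragraph this gives $\beta\in[-\alpha,\gamma]\setminus\{-\alpha,\gamma\}=(-\alpha,\gamma)$.

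There is essentially no obstacle here: once the finiteness of $o(r_{\alpha}r_{\gamma})$ is pulled out of Lemma \ref{Lemma: nested infinite order}(b), the prenilpotency statement and both inclusions are formal consequences of the definitions. The only minor subtlety is keeping track of the difference between ``nested'' and ``prenilpotent'': one must translate un-nestedness of $\{\alpha,\gamma\}$ into the two non-empty intersections that certify prenilpotency of $\{-\alpha,\gamma\}$.
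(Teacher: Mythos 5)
Your proof is correct and follows essentially the same two-step structure as the paper's: establish that $\{-\alpha,\gamma\}$ is prenilpotent, then chase the two defining containments of $[-\alpha,\gamma]$ directly from $\alpha\cap\beta\subseteq\gamma$ and $(-\alpha)\cap(-\beta)\subseteq -\gamma$. The only divergence is in the first step: the paper appeals to Lemma~\ref{Lemma: nested infinite order} together with an external reference (\cite[Lemma $8.42(3)$]{AB08}) to assert prenilpotency, whereas you derive $o(r_{\alpha}r_{\gamma})<\infty$ from Lemma~\ref{Lemma: nested infinite order}(b) and then unpack the (i)$\Leftrightarrow$(ii) equivalence of part (a) to get un-nestedness of $\{\alpha,\gamma\}$, which you correctly translate into prenilpotency of $\{-\alpha,\gamma\}$. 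Your version is self-contained, which is a small improvement; strictly speaking you should note that $\gamma\neq\pm\alpha$ before invoking Lemma~\ref{Lemma: nested infinite order}(a) (this follows since $\gamma\neq\alpha$ by definition of $(\alpha,\beta)$, and $\gamma=-\alpha$ would force $\alpha\cap\beta\subseteq\alpha\cap(-\alpha)=\emptyset$, contradicting prenilpotency of $\{\alpha,\beta\}$), but this is a trivial omission. The contradiction-based phrasing of the two inclusion checks is logically identical to the paper's direct computation using $\gamma\subseteq\alpha\cup\beta$ and $-\gamma\subseteq(-\alpha)\cup(-\beta)$.
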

\begin{proof}
	Note that by Lemma \ref{Lemma: nested infinite order} and \cite[Lemma $8.42(3)$]{AB08} the pair $\{ -\alpha, \gamma \}$ is prenilpotent. Thus we have to show that $(-\alpha) \cap \gamma \subseteq \beta$ and $\alpha \cap (-\gamma) \subseteq (-\beta)$. As $\gamma \in (\alpha, \beta)$, we have $\alpha \cap \beta \subseteq \gamma$ and $(-\alpha) \cap (-\beta) \subseteq (-\gamma)$. In particular, we have $(-\gamma) \subseteq (-\alpha) \cup (-\beta)$ and $\gamma \subseteq \alpha \cup \beta$. We compute:
	\allowdisplaybreaks
	\begin{align*}
		(-\alpha) \cap \gamma &\subseteq (-\alpha) \cap (\alpha \cup \beta) \subseteq (-\alpha) \cap \beta \subseteq \beta \\
		\alpha \cap (-\gamma) &\subseteq \alpha \cap ((-\alpha) \cup (-\beta)) \subseteq \alpha \cap (-\beta) \subseteq (-\beta) \qedhere
	\end{align*}
\end{proof}

\subsection*{Reflection and combinatorial triangles in $\mathbf{\Sigma(W, S)}$}

A \textit{reflection triangle} is a set $R$ of three reflections such that the order of $tt'$ is finite for all $t, t' \in R$ and such that $\bigcap_{t\in R} \partial^2 \beta_t = \emptyset$, where $\beta_t$ is one of the two roots associated with the reflection $t$. Note that $\partial^2 \beta_t = \partial^2 (-\beta_t)$. A set of three roots $T$ is called \textit{combinatorial triangle} (or simply \textit{triangle}) if the following hold:

\begin{enumerate}[label=(CT\arabic*)]
	\item The set $\{ r_{\alpha} \mid \alpha \in T \}$ is a reflection triangle.
	
	\item For each $\alpha \in T$, there exists $\sigma \in \partial^2 \beta \cap \partial^2 \gamma$ with $\sigma \subseteq \alpha$, where $\{ \beta, \gamma \} = T \backslash \{ \alpha \}$.
\end{enumerate}

\begin{remark}\label{Remark: reflection triangle plus orientation yields triangle}
	Let $R$ be a reflection triangle. Then there exist three roots $\beta_1, \beta_2, \beta_3 \in \Phi$ such that $R = \{ r_{\beta_1}, r_{\beta_2}, r_{\beta_3} \}$. Let $\{i, j, k\} = \{1, 2, 3\}$. As $o(r_{\beta_i} r_{\beta_j}) < \infty$, there exists $\sigma_k \in \partial^2 \beta_i \cap \partial^2 \beta_j$ by Lemma \ref{Lemma: nested infinite order}. Since $R$ is a reflection triangle, we have $\sigma_k \notin \partial^2 \beta_k$ and Lemma \ref{Lemma: residue and root} yields $\sigma_k \subseteq \beta_k$ or $\sigma_k \subseteq -\beta_k$. Let $\epsilon_k \in \{+, -\}$ with $\sigma_k \subseteq \epsilon_k \beta_k$ and define $\alpha_k := \epsilon_k \beta_k$. Then $\{ \alpha_1, \alpha_2, \alpha_3 \}$ is a triangle, which induces the reflection triangle $R$.
\end{remark}

\begin{lemma}\label{reflectiontrianglechamber}
	Suppose that $(W, S)$ is $2$-spherical and let $T$ be a triangle.
	\begin{enumerate}[label=(\alph*)]
		\item If the Coxeter diagram is the complete graph, then $(-\alpha, \beta) = \emptyset$ for all $\alpha \neq \beta \in T$.
		
		\item If $(W, S)$ is cyclic hyperbolic, then $T$ contains a unique chamber, i.e.\ $\vert \bigcap_{\alpha \in T} \alpha \vert = 1$.
	\end{enumerate}
\end{lemma}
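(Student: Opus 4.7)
The plan is to prove both parts by contradiction, combining the combinatorial triangle conditions (CT1)--(CT2) with Lemmas \ref{Lemma: nested infinite order}, \ref{Lemma: residues in the 2-boundary are parallel}, and \ref{Lemma: residue cut by intersection reflections}. First I would write $T = \{\alpha, \beta, \delta\}$ and, via (CT2), fix three ``vertex residues'': $\sigma_\alpha \in \partial^2 \beta \cap \partial^2 \delta$ with $\sigma_\alpha \subseteq \alpha$, and analogously $\sigma_\beta \in \partial^2 \alpha \cap \partial^2 \delta$ with $\sigma_\beta \subseteq \beta$, and $\sigma_\delta \in \partial^2 \alpha \cap \partial^2 \beta$ with $\sigma_\delta \subseteq \delta$.

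For (a), I would assume toward a contradiction that there exists $\gamma \in (-\alpha, \beta)$ for some $\alpha \neq \beta$ in $T$. A short direct check, using $\alpha \neq \beta$, rules out $\gamma \in \{\pm \alpha, \pm \beta\}$, so $\alpha, \beta, \gamma$ are pairwise distinct and pairwise non-opposite. Since $o(r_\alpha r_\beta) < \infty$ by (CT1), Lemma \ref{Lemma: nested infinite order}(b) applied to $(-\alpha, \beta)$ gives $\partial^2 \alpha \cap \partial^2 \beta \cap \partial^2 \gamma \neq \emptyset$, and Lemma \ref{Lemma: residue cut by intersection reflections}(a) then forces $\sigma_\delta \in \partial^2 \gamma$. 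Next I would analyse the three walls $\partial \alpha, \partial \beta, \partial \gamma$ restricted to the rank-$2$ residue $\sigma_\delta$: the completeness of the Coxeter diagram makes $\sigma_\delta$ of type $\{s, s'\}$ with $m_{ss'} \geq 3$, so all four quadrants cut out by $\partial \alpha$ and $\partial \beta$ inside $\sigma_\delta$ are non-empty. The orientation data $\sigma_\delta \subseteq \delta$, $(-\alpha) \cap \beta \subseteq \gamma$, and $\alpha \cap (-\beta) \subseteq -\gamma$ then pin down the position of $\partial \gamma$ inside $\sigma_\delta$. Repeating the same analysis at $\sigma_\alpha$ and $\sigma_\beta$, via Lemma \ref{Lemma: residue cut by intersection reflections}(b) applied to the triples $\{\alpha, \delta, \gamma\}$ and $\{\beta, \delta, \gamma\}$, transfers the information to those vertex residues and yields inclusions incompatible with $\sigma_\alpha \subseteq \alpha$ or $\sigma_\beta \subseteq \beta$, delivering the contradiction.

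For (b), I would assume $\vert \bigcap_{\epsilon \in T} \epsilon \vert \geq 2$. As an intersection of convex roots, $\bigcap T$ is convex, and any minimal gallery between two of its chambers stays inside it, so there must be adjacent $c \neq c' \in \bigcap T$. The wall $\partial \gamma$ through their common panel satisfies $\gamma \notin \{\pm \alpha, \pm \beta, \pm \delta\}$, because $c$ and $c'$ lie on the same side of every wall from $T$. Since $(W, S)$ is cyclic hyperbolic of rank $3$, $\vert \langle S \rangle \vert = \infty$, and Lemma \ref{Lemma: residues in the 2-boundary are parallel}(b) ensures that any two distinct non-opposite walls share at most one rank-$2$ residue. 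I would then case-split on the three orders $o(r_\gamma r_\epsilon)$ for $\epsilon \in T$: when all three are finite, Lemma \ref{Lemma: residue cut by intersection reflections}(b) applied to triples containing $\gamma$ forces $\gamma$ to lie in $\{\pm \alpha, \pm \beta, \pm \delta\}$ via its position in the unique shared residues with $\sigma_\alpha, \sigma_\beta, \sigma_\delta$; when some order is infinite, $\partial \gamma$ and $\partial \epsilon$ are disjoint in the $2$-boundary by Lemma \ref{Lemma: nested infinite order}(a), but chasing $\partial \gamma$ through the triangle region by means of Lemma \ref{CM05Lem2.3} shows it must eventually cross $\partial \epsilon$ in order to exit the convex set $\bigcap T$, a contradiction.

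The main obstacle will be, in part (a), the careful combinatorial bookkeeping inside a single dihedral residue, where three reflections interact under three orientation constraints and completeness of the diagram must be invoked at just the right moment to guarantee the required quadrants are populated. In part (b), the subtle step is ruling out the ``parallel'' case $o(r_\gamma r_\epsilon) = \infty$: this is precisely where the exclusion of the Euclidean $(3, 3, 3)$ type via cyclic hyperbolicity becomes crucial, since in the $(3, 3, 3)$ affine setting parallel walls inside a triangle region genuinely do occur and no contradiction can be derived.
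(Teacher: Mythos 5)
The paper does not give a self-contained argument here: it proves assertion (a) by citing \cite[Proposition $2.3$]{Bi22} and assertion (b) by appealing to the classification of hyperbolic triangle regions in \cite{Fe98}. You are therefore attempting a genuinely different route, namely a direct combinatorial derivation from (CT1)--(CT2) and the paper's own residue lemmas. That is a legitimate thing to try, but as written it has real gaps.

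For (a), the chain breaks when you try to ``repeat the same analysis at $\sigma_\alpha$ and $\sigma_\beta$, via Lemma \ref{Lemma: residue cut by intersection reflections}(b) applied to the triples $\{\alpha, \delta, \gamma\}$ and $\{\beta, \delta, \gamma\}$.'' Lemma \ref{Lemma: residue cut by intersection reflections} has $\partial^2 \alpha \cap \partial^2 \delta \cap \partial^2 \gamma \neq \emptyset$ as a hypothesis, and this is equivalent (via Lemma \ref{Lemma: nested infinite order}) to $o(r_\gamma r_\delta) < \infty$ plus a compatibility of residues. You have only established $\partial^2 \alpha \cap \partial^2 \beta \cap \partial^2 \gamma \neq \emptyset$. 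Since (CT1) forces $\partial^2 \alpha \cap \partial^2 \beta \cap \partial^2 \delta = \emptyset$, one actually expects $r_\gamma$ and $r_\delta$ to behave like the ``third side'' relative to a root between the other two, i.e.\ $o(r_\gamma r_\delta) = \infty$ (compare Lemma \ref{Lemma: Triangle and infinite order}(a), which says exactly this for $\beta \in (\alpha_1, \alpha_2)$). In that case the information does \emph{not} transfer to $\sigma_\alpha$ or $\sigma_\beta$, and your contradiction evaporates. Note also that Lemma \ref{Lemma: Triangle and infinite order} is itself proved in the paper using Lemma \ref{reflectiontrianglechamber}(a), so you cannot lean on it here without circularity; a standalone argument would have to re-derive the needed interaction of $\gamma$ with $\delta$ from scratch.

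For (b), the final step --- ``chasing $\partial \gamma$ through the triangle region by means of Lemma \ref{CM05Lem2.3} shows it must eventually cross $\partial \epsilon$ in order to exit the convex set $\bigcap T$'' --- is not an argument. A wall is a collection of panels, so ``exiting a convex chamber set'' is not a well-formed move, and Lemma \ref{CM05Lem2.3} only produces a gallery inside $\mathcal{C}(\partial^2 \alpha)$ joining two chambers on the same side of a single wall; it does not say anything about a wall being forced to intersect another wall. What is really being invoked in the paper is the Felikson classification \cite{Fe98} of all (reflection) triangle regions in rank-$3$ hyperbolic Coxeter systems, from which one reads off that in the cyclic hyperbolic types (excluding $(3,3,3)$) the triangle is a single chamber. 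Reproving that classification in a few lines of wall bookkeeping is not realistic, and the case split on $o(r_\gamma r_\epsilon)$ you sketch does not substitute for it. Your remark about the $(3,3,3)$ obstruction is correct in spirit, but the actual mechanism that excludes the non-Euclidean cases is the cited classification, not a local convexity argument.
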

\begin{proof}
	Assertion $(a)$ is \cite[Proposition $2.3$]{Bi22} and assertion $(b)$ is a consequence of the classification in \cite{Fe98} (cf.\ Figure $8$ in $\S 5.1$ in loc.\ cit.).
\end{proof}

\begin{lemma}\label{Lemma: Triangle and infinite order}
	Suppose that $(W, S)$ is $2$-spherical and that the Coxeter diagram is the complete graph. Let $\{ \alpha_1, \alpha_2, \alpha_3 \}$ be a triangle and let $\beta \in (\alpha_1, \alpha_2)$.
	\begin{enumerate}[label=(\alph*)]
		\item We have $o(r_{\beta} r_{\alpha_3}) = \infty$ and $-\alpha_3 \subseteq \beta$.
		
		\item If $(W, S)$ is cyclic hyperbolic, then $(-\alpha_3, \beta) = \emptyset$.
	\end{enumerate}
\end{lemma}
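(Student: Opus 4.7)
The plan is to prove part~(a) by first establishing $o(r_\beta r_{\alpha_3}) = \infty$, then deducing $-\alpha_3 \subseteq \beta$ by ruling out the other three nesting configurations via Lemma~\ref{Lemma: nested infinite order}(a). Setup: since $\beta \in (\alpha_1, \alpha_2)$ with $o(r_{\alpha_1}r_{\alpha_2}) < \infty$, Lemma~\ref{Lemma: nested infinite order}(b) provides a residue $R_0 \in \partial^2\alpha_1 \cap \partial^2\alpha_2 \cap \partial^2\beta$, and Lemma~\ref{Lemma: residue cut by intersection reflections}(a) then yields
\[
Q := \partial^2\alpha_1 \cap \partial^2\alpha_2 = \partial^2\alpha_1 \cap \partial^2\beta = \partial^2\alpha_2 \cap \partial^2\beta.
\]
The reflection triangle hypothesis forces $Q \cap \partial^2\alpha_3 = \emptyset$.

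Assume for contradiction $o(r_\beta r_{\alpha_3}) < \infty$. By Lemma~\ref{Lemma: nested infinite order}(a) there is $R \in \partial^2\beta \cap \partial^2\alpha_3$. If $R \in \partial^2\alpha_i$ for some $i \in \{1,2\}$, then $R \in Q \cap \partial^2\alpha_3 = \emptyset$, so Lemma~\ref{Lemma: residue and root} instead forces $R \subseteq \varepsilon_1 \alpha_1 \cap \varepsilon_2 \alpha_2$ for some $\varepsilon_i \in \{+,-\}$. Since $R \in \partial^2\beta$ meets both $\beta$ and $-\beta$, and $\alpha_1 \cap \alpha_2 \subseteq \beta$ together with $-\alpha_1 \cap -\alpha_2 \subseteq -\beta$, the cases $\varepsilon_1 = \varepsilon_2$ are excluded; without loss of generality $R \subseteq \alpha_1 \cap -\alpha_2$. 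I would finish the contradiction by applying Lemma~\ref{CM05Lem2.3} to the root $\beta$ to construct a minimal gallery between chambers of $R \cap \beta \cap \mathcal{C}(\partial\beta)$ and $R_0 \cap \beta \cap \mathcal{C}(\partial\beta)$ whose intermediate rank-$2$ residues all lie in $\partial^2\beta$, then invoking Lemma~\ref{CM06Prop2.7} to conclude that $R$ and $R_0$ are parallel; parallelism transfers the stabilizing reflections $r_{\alpha_1}, r_{\alpha_2}$ of $R_0$ onto $R$, placing $R$ in $Q$, which contradicts $Q \cap \partial^2\alpha_3 = \emptyset$. This parallelism step is the main obstacle; if it is too delicate to run in the general $2$-spherical setting, an alternative is to exploit $r_\beta \in \langle r_{\alpha_1}, r_{\alpha_2}\rangle$ (which follows from $R_0 \in Q \cap \partial^2\beta$) to conjugate $R$ into $\partial^2\alpha_i$ and then use the adjacency $(-\alpha_i, \alpha_j) = \emptyset$ from Lemma~\ref{reflectiontrianglechamber}(a) to force a contradiction with $R \in \partial^2\alpha_3$.

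With $o(r_\beta r_{\alpha_3}) = \infty$ in hand, Lemma~\ref{Lemma: nested infinite order}(a) yields one of four nesting configurations for $\{\pm\beta, \alpha_3\}$. The three unwanted cases $\beta \subseteq \alpha_3$, $\alpha_3 \subseteq \beta$, $\beta \subseteq -\alpha_3$ are ruled out respectively by exhibiting a chamber in $\alpha_1 \cap \alpha_2 \cap -\alpha_3$ (the edge region at $\partial\alpha_3$), in $-\alpha_1 \cap -\alpha_2 \cap \alpha_3$ (the vertex region at $v_{12}$), and in $\alpha_1 \cap \alpha_2 \cap \alpha_3$ (the triangle interior); each region is non-empty because the residue $\sigma_k$ furnished by the combinatorial triangle axiom ($k = 1, 2, 3$) contributes chambers to all four $\pm\alpha_i \cap \pm\alpha_j$ combinations while lying in $\alpha_k$. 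The surviving case is $-\alpha_3 \subseteq \beta$, completing~(a). For part~(b), suppose $\gamma \in (-\alpha_3, \beta)$ in the cyclic hyperbolic case, so $-\alpha_3 \subsetneq \gamma \subsetneq \beta$; by Lemma~\ref{Lemma: nested infinite order}(a), $o(r_{\alpha_3}r_\gamma) = o(r_\beta r_\gamma) = \infty$. Using Lemma~\ref{reflectiontrianglechamber}(b) (unique chamber $c \in \bigcap_i \alpha_i$) together with Lemma~\ref{Lemma: residues in the 2-boundary are parallel}(b) (applicable since all rank-$3$ parabolics in cyclic hyperbolic are infinite), I would show that $\gamma$ itself must lie in $(\alpha_1, \alpha_2)$ by analyzing the position of $c$ relative to $\partial\gamma$ and exploiting the uniqueness provided by cyclic hyperbolicity; applying part~(a) to $\gamma$ then gives $-\alpha_3 \subseteq \gamma$, and contrasting this with the strict inclusion $\gamma \subsetneq \beta$ and the rigid dihedral structure at $v_{12}$ (in particular the adjacency $(-\alpha_1, \alpha_2) = \emptyset$ from Lemma~\ref{reflectiontrianglechamber}(a)) should yield the contradiction. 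The hardest step in~(b) is forcing $\gamma \in (\alpha_1, \alpha_2)$ from the nesting data alone.
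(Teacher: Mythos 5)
Your proposal diverges substantially from the paper's proof, and both halves contain genuine gaps.

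For part (a), the key step you attempt — deducing a contradiction from $o(r_\beta r_{\alpha_3}) < \infty$ by showing that the residue $R \in \partial^2\beta \cap \partial^2\alpha_3$ is parallel to $R_0 \in \partial^2\alpha_1 \cap \partial^2\alpha_2 \cap \partial^2\beta$ — does not work as stated. Lemma~\ref{CM05Lem2.3} produces a gallery along $\mathcal{C}(\partial^2\beta)$ connecting the two residues, but it does not produce the sequence of \emph{rank-$3$ spherical} residues that Lemma~\ref{CM06Prop2.7}(iii) demands in order to conclude parallelism; in the ambient rank-$3$ non-spherical Coxeter systems considered here, no such sequence exists. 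More fundamentally, Lemma~\ref{Lemma: residues in the 2-boundary are parallel}(a) only gives parallelism when both residues lie in $\partial^2\alpha \cap \partial^2\beta$ for a \emph{common} pair of distinct roots $\alpha \neq \pm\beta$, which $R$ and $R_0$ do not share. Two residues both in $\partial^2\beta$ are not, in general, parallel. Your fallback suggestion ($r_\beta \in \langle r_{\alpha_1}, r_{\alpha_2}\rangle$ and conjugating $R$) is too vague to close the gap. The paper's argument is quite different and you miss its key ingredients: it first records, via Lemma~\ref{Lemma: interval of roots}, that $\alpha_2 \in (-\alpha_1, \beta)$ and $\alpha_1 \in (-\alpha_2, \beta)$; it then shows (using Remark~\ref{Remark: reflection triangle plus orientation yields triangle} together with a careful residue/root-containment analysis) that some orientation $\{\epsilon\alpha_1, \beta, \alpha_3\}$, and then $\{\beta, \alpha_2, \alpha_3\}$, is a combinatorial triangle, and invokes Lemma~\ref{reflectiontrianglechamber}(a) — the crucial $(-\alpha, \beta) = \emptyset$ property for triangles in the complete-graph case, which you never use — to contradict $\alpha_2 \in (-\alpha_1, \beta)$ and $\alpha_1 \in (-\alpha_2, \beta)$. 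Your treatment of the nesting once $o(r_\beta r_{\alpha_3}) = \infty$ is granted is essentially fine, though the paper reaches the same conclusion more economically: it shows $\{-\alpha_3, \beta\}$ is prenilpotent (hence nested by Lemma~\ref{Lemma: nested infinite order}) and only needs to exclude $\beta \subseteq -\alpha_3$.

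For part (b), your plan hinges on showing that any $\gamma \in (-\alpha_3, \beta)$ must lie in $(\alpha_1, \alpha_2)$, but this is not justified and I do not see why it should hold: $-\alpha_3 \subsetneq \gamma \subsetneq \beta$ together with $\alpha_1 \cap \alpha_2 \subseteq \beta$ gives $\alpha_1 \cap \alpha_2 \cap \gamma \neq \emptyset$, not $\alpha_1 \cap \alpha_2 \subseteq \gamma$. The paper instead makes no such claim; it takes the unique chamber $y \in \bigcap_i \alpha_i$ from Lemma~\ref{reflectiontrianglechamber}(b), the unique $R \in \partial^2\alpha_1 \cap \partial^2\alpha_2$ from Lemma~\ref{Lemma: residues in the 2-boundary are parallel}(b), and a minimal gallery from the chamber $x$ across $\partial\alpha_3$ from $y$ through $R$ to the chamber opposite $y$ in $R$. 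Any $\gamma \in (-\alpha_3, \beta)$ would have to be one of the walls crossed inside $R$, forcing $R \in \partial^2\gamma$; but $\gamma \subsetneq \beta$ with $o(r_\gamma r_\beta) = \infty$ makes $\partial^2\gamma \cap \partial^2\beta = \emptyset$ (Lemma~\ref{Lemma: nested infinite order}(a)), contradicting $R \in \partial^2\gamma \cap \partial^2\beta$. This concrete gallery construction inside $R$ is the missing idea.
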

\begin{proof}
	Let $i \in \{1, 2\}$ and assume that $o(r_{\alpha_3} r_{\beta}) < \infty$. Then $\{ r_{\alpha_i}, r_{\beta}, r_{\alpha_3} \}$ is a reflection triangle by Lemma \ref{Lemma: nested infinite order}$(b)$ and \ref{Lemma: residue cut by intersection reflections}$(a)$. We will apply Remark \ref{Remark: reflection triangle plus orientation yields triangle} to determine the triangle which induces $\{ r_{\alpha_1}, r_{\beta}, r_{\alpha_3} \}$. We note that $\alpha_2 \in (-\alpha_1, \beta)$ and $\alpha_1 \in (-\alpha_2, \beta)$ hold by Lemma \ref{Lemma: interval of roots}.
	
	There exist $R \in \partial^2 \alpha_1 \cap \partial^2 \alpha_2 = \partial^2 \alpha_1 \cap \partial^2 \beta$ with $R \subseteq \alpha_3$ and $R' \in \partial^2 \alpha_1 \cap \partial^2 \alpha_3$ with $R' \subseteq \alpha_2$. Thus $\emptyset \neq R' \cap \alpha_1 \subseteq \alpha_1 \cap \alpha_2 \subseteq \beta$ and Lemma \ref{Lemma: residue and root} yields $R \subseteq \beta$. This implies that there exists $\epsilon \in \{+, -\}$ such that $\{ \epsilon \alpha_1, \beta, \alpha_3 \}$ is a triangle. If $\epsilon = +$, then $(-\alpha_1, \beta) = \emptyset$ by Lemma \ref{reflectiontrianglechamber}. But this is a contradiction to the fact that $\alpha_2 \in (-\alpha_1, \beta)$. Thus we have $\epsilon = -$. We show that $\{ \beta, \alpha_2, \alpha_3 \}$ is a triangle.
	
	There exists $R_3 \in \partial^2 \alpha_1 \cap \partial^2 \alpha_2 = \partial^2 \beta \cap \partial^2 \alpha_2$ with $R_3 \subseteq \alpha_3$. There exists $R_1 \in \partial^2 \alpha_2 \cap \partial^2 \alpha_3$ with $R_1 \subseteq \alpha_1$. In particular, $\emptyset \neq R_1 \cap \alpha_2 \subseteq \alpha_1 \cap \alpha_2 \subseteq \beta$. Now, as $\{ -\alpha_1, \beta, \alpha_3 \}$ is a triangle, there exists $T \in \partial^2 \beta \cap \partial^2 \alpha_3$ with $T \subseteq (-\alpha_1)$. Thus $\emptyset \neq \beta \cap T \subseteq \beta \cap (-\alpha_1) \subseteq \alpha_2$. We infer that $\{ \beta, \alpha_2, \alpha_3 \}$ is a triangle and hence $(-\alpha_2, \beta) = \emptyset$ by Lemma \ref{reflectiontrianglechamber}. But this is again a contradiction. We conclude $o(r_{\alpha_3} r_{\beta}) = \infty$.
	
	Note that $\emptyset \neq R_3 \cap (-\beta) \subseteq \alpha_3$ and $\emptyset \neq R_1 \cap \alpha_2 \cap (-\alpha_3) \subseteq \alpha_1 \cap \alpha_2 \subseteq \beta$. In particular, $\{-\alpha_3, \beta\}$ is a prenilpotent pair and by Lemma \ref{Lemma: nested infinite order} it is nested. We have $\emptyset \neq R_3 \cap \beta \subseteq \alpha_3$. But this implies $\beta \not\subseteq (-\alpha_3)$. We deduce $(-\alpha_3) \subseteq \beta$. This finishes the proof of $(a)$.
	
	Let $\{x, y\} \in \partial \alpha_3$ such that $\bigcap_{i=1}^3 \alpha_i = \{y\}$ (cf.\ Lemma \ref{reflectiontrianglechamber}) and let $R \in \partial^2 \alpha_1 \cap \partial^2 \alpha_2$. By Lemma \ref{Lemma: residues in the 2-boundary are parallel}$(b)$ $R$ is unique and hence $y\in R$. Let $d\in R$ be opposite to $y$ in $R$ and let $(c_0 = x, c_1 = y, \ldots, c_n = d)$ be a minimal gallery. Then $c_i \in R$ for each $1 \leq i \leq n$. Let $(\beta_1, \ldots, \beta_n)$ be the sequence of roots crossed by $(c_0, \ldots, c_n)$. Then $\beta_1 = -\alpha_3$ and $o(r_{\beta_i} r_{\beta}) < \infty$ for each $2 \leq i \leq n$ by Lemma \ref{Lemma: nested infinite order}. Assume $(-\alpha_3, \beta) \neq \emptyset$. Then \cite[Lemma $3.69$]{AB08} implies that for each $\gamma \in (-\alpha_3, \beta)$ there exists $2 \leq i \leq n-1$ with $\gamma = \beta_i$. As $\gamma \subsetneq \beta$, this is a contradiction and hence $(-\alpha_3, \beta) = \emptyset$.
\end{proof}

\begin{lemma}\label{wordsincoxetergroup}
	Assume that $(W, S)$ is $2$-spherical and that the Coxeter diagram is the complete graph. Suppose $w\in W$ and $s\neq t \in S$ with $\ell(ws) = \ell(w) +1 = \ell(wt)$ and suppose $w' \in \langle s, t\rangle$ with $\ell(w') \geq 2$. Then we have $\ell(ww'r) = \ell(w) + \ell(w') +1$ for each $r\in S\backslash \{s, t\}$.
	
	If, moreover, $m_{pq} \neq 3$ for all $p, q \in S$, then we have $\ell(ww'rf) = \ell(w) + \ell(w') +2$ for each $r\in S \backslash \{s, t\}$ and $f\in S \backslash \{ r \}$.
\end{lemma}
\begin{proof}
	The first part is \cite[Corollary $2.8$]{BiCoxGrowth} and the second part is a consequence of the first.
\end{proof}

\begin{lemma}\label{projRiinduction}
	Assume that $(W, S)$ is $2$-spherical and that the Coxeter diagram is the complete graph. Let $\alpha \in \Phi_+$ be a root and let $P, Q \in \partial \alpha$. Let $P_0 = P, \ldots, P_n = Q$ and $R_1, \ldots, R_n$ be as in Lemma \ref{CM06Prop2.7}. If $\proj_{R_i} 1_W = \proj_{P_{i-1}} 1_W$ for some $1 \leq i \leq n$, then $\proj_{R_n} 1_W = \proj_{P_{n-1}}1_W$.
\end{lemma}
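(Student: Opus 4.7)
The plan is to proceed by induction on $n - i$. The base case $i = n$ is trivial, and the induction reduces to a single-step claim: if the property $\proj_{R_j} 1_W = \proj_{P_{j-1}} 1_W$ holds at index $j = i < n$, then it also holds at $j = i+1$. Iterating this propagates the property to $j = n$.

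For the one-step claim, set $x := \proj_{R_i} 1_W = \proj_{P_{i-1}} 1_W$, write $\{s,t\}$ for the type of $R_i$, and put $m := m_{st}$. My first observation is that $R_i \neq R_{i+1}$: by Lemma \ref{CM06Prop2.7}(iii), $\proj_{R_i} P = P_{i-1}$ and $\proj_{R_{i+1}} P = P_i$, and these are distinct since opposite panels of $R_i$ are distinct. Both $R_i$ and $R_{i+1}$ contain $P_i$; denoting the type of $P_i$ by $u' \in \{s,t\}$, uniqueness of residues of a given type through a chamber forces the type of $R_{i+1}$ to be $\{u', t'\}$ with $t' \in S \setminus \{s,t\}$.

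Next I set $y := \proj_{P_i} x$. Composition of projections (the gated property applied to $P_i \subseteq R_i$) gives $\proj_{P_i} 1_W = y$. Since $P_{i-1}$ and $P_i$ are opposite panels in the spherical residue $R_i$, their distance is $m - 1$, so $y = xu$ for some $u \in \langle s,t\rangle$ with $\ell(u) = m - 1$; convexity of $R_i$ then yields $\ell(y) = \ell(x) + m - 1$. Because the Coxeter diagram is complete and $(W,S)$ is $2$-spherical, the convention $m_{st} \in \{2,3,4,6,8,\infty\}$ forces $m \in \{3,4,6,8\}$, hence $\ell(u) \geq 2$. Since $x = \proj_{R_i} 1_W$, also $\ell(xs) = \ell(x) + 1 = \ell(xt)$, so the hypotheses of Lemma \ref{wordsincoxetergroup} are in force with $w = x$, $w' = u$, and $r = t'$, yielding $\ell(y t') = \ell(x) + \ell(u) + 1 = \ell(y) + 1$. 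On the other hand, $\ell(y u') = \ell(y) + 1$ because $y = \proj_{P_i} 1_W$ is strictly closer to $1_W$ than its panel-partner $y u' \in P_i$. Both neighbours of $y$ inside the spherical rank-$2$ residue $R_{i+1}$ are therefore farther from $1_W$ than $y$, so local minimality in the $2m$-gon forces $y = \proj_{R_{i+1}} 1_W$; together with $y = \proj_{P_i} 1_W$ this closes the induction.

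The main subtlety I anticipate is the type bookkeeping at the start: establishing that $R_i \neq R_{i+1}$ and pinning down $t' \in S \setminus \{s,t\}$, which is exactly what allows Lemma \ref{wordsincoxetergroup} to deliver the length bound $\ell(y t') = \ell(y) + 1$. Once this is in hand, everything else is a direct length computation plus the unimodal (hence locally-to-globally minimal) behaviour of $\ell(1_W, \cdot)$ on a spherical rank-$2$ residue.
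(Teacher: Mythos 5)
Your proof is correct and follows essentially the same route as the paper's: induction on $n-i$, the gate (projection) property of $\proj_{R_i}1_W$, and Lemma~\ref{wordsincoxetergroup} applied to the chamber $\proj_{P_i}1_W = (\proj_{R_i}1_W)\,u$ with $\ell(u)=m-1\geq 2$, concluding that both neighbours of $\proj_{P_i}1_W$ inside $R_{i+1}$ increase length. You spell out the type bookkeeping ($R_i\neq R_{i+1}$, the type of $R_{i+1}$ is $\{u',t'\}$ with $t'\notin\{s,t\}$) a bit more explicitly than the paper; the only minor imprecision is attributing $\ell(y)=\ell(x)+m-1$ to convexity of $R_i$ when what is really used is that $x$ is the gate of $R_i$ from $1_W$.
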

\begin{proof}
	We will show the hypothesis by induction on $n-i$. If $n-i = 0$, then there is nothing to show. Thus we suppose $n-i >0$. Let $J_i$ be the type of $R_i$ and let $w := \proj_{R_i} 1_W = \proj_{P_{i-1}} 1_W \in P_{i-1}$. As $P_{i-1} \neq P_i$ are contained and opposite in $R_i$ by Lemma \ref{CM06Prop2.7}, there exists $w' \in P_i$ such that $w$ and $w'$ are opposite in $R_i$, i.e.\ $w' = wr_{J_i}$. For $t\in S$ with $J_i \cap J_{i+1} = \{t\}$ we have $R_i \cap R_{i+1} = P_i = \P_t(w') = \P_t(wr_{J_i})$. As $w = \proj_{R_i} 1_W$, we deduce $\ell( \proj_{P_i} 1_W ) = \ell(w (r_{J_i} t)) \geq \ell(w) +2$. Using Lemma \ref{wordsincoxetergroup}, we infer $\ell((\proj_{P_i} 1_W)s) = \ell(\proj_{P_i} 1_W) +1$ for $s \in J_{i+1} \backslash J_i$ and hence $\proj_{R_{i+1}} 1_W = \proj_{P_i} 1_W$. Using induction the claim follows.
\end{proof}

\begin{lemma}\label{Lemma: residues in pairs}
	Assume that $(W, S)$ is $2$-spherical and that the Coxeter diagram is the complete graph. Let $w\in W$ and $r, s, t\in S$ be pairwise distinct such that $\ell(ws) = \ell(w) +1 = \ell(wt)$. If $\ell(wsr) = \ell(w)$, then $\ell(wsrt) = \ell(w) +1$.
\end{lemma}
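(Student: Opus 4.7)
The plan is to compute $\ell(wsrt)$ directly by first factorizing $ws$ through the longest element of the spherical parabolic $\langle r,s\rangle$ and then invoking Lemma \ref{wordsincoxetergroup}. To start, I would observe that $s$ is trivially a right descent of $ws$, and that $r$ is also a right descent since $\ell(wsr)=\ell(w)<\ell(ws)$ by hypothesis. Because $\langle r,s\rangle$ is finite by $2$-sphericality, the standard coset decomposition of $W$ relative to $\langle r,s\rangle$ then yields a reduced factorization $ws = v\cdot r_{\{r,s\}}$ with $\ell(v) = \ell(ws)-m_{rs} = \ell(w)+1-m_{rs}$.

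The main point is to verify that $r,s\notin D_R(v)$. If $\ell(vs)=\ell(v)-1$, write $v=v's$ reduced; since $s$ is a left descent of the longest element $r_{\{r,s\}}$, we have $\ell(s\cdot r_{\{r,s\}})=m_{rs}-1$, hence
\[
\ell(ws) \;=\; \ell(v'\cdot s\cdot r_{\{r,s\}}) \;\leq\; \ell(v') + m_{rs}-1 \;=\; \ell(w)-1,
\]
contradicting $\ell(ws)=\ell(w)+1$. A symmetric argument rules out $r$, so $\ell(vr)=\ell(vs)=\ell(v)+1$.

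With these in place, I would apply Lemma \ref{wordsincoxetergroup} to the element $v$ with the two elongating letters $r,s$ (in the role of the lemma's $s,t$) and the generator $t\in S\setminus\{r,s\}$ (in the role of the lemma's $r$), taking $w' := r_{\{r,s\}}\cdot r\in\langle r,s\rangle$. The complete diagram forces $m_{rs}\geq 3$, so $\ell(w') = m_{rs}-1\geq 2$, and the lemma gives
\[
\ell(v\cdot w'\cdot t) \;=\; \ell(v) + \ell(w') + 1 \;=\; (\ell(w)+1-m_{rs}) + (m_{rs}-1) + 1 \;=\; \ell(w)+1.
\]
Since $v\cdot w'\cdot t \;=\; v\cdot r_{\{r,s\}}\cdot r\cdot t \;=\; ws\cdot r\cdot t \;=\; wsrt$, this produces the required identity $\ell(wsrt)=\ell(w)+1$.

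The only substantive step is the middle one (verifying that $r,s$ both elongate $v$); the rest is a standard descent-set factorization and a direct invocation of Lemma \ref{wordsincoxetergroup}. No contradiction-based argument and no residue-theoretic machinery beyond the initial descent decomposition are needed.
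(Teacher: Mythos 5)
Your proof is correct and follows essentially the same route as the paper, which simply states that the lemma is ``a consequence of Lemma~\ref{wordsincoxetergroup}''; you have supplied the intermediate bookkeeping. The decomposition $ws = v\cdot r_{\{r,s\}}$, the length computation for $r_{\{r,s\}}\cdot r$, and the application of Lemma~\ref{wordsincoxetergroup} with $(v;\,r,s;\,r_{\{r,s\}}r;\,t)$ in place of $(w;\,s,t;\,w';\,r)$ are all valid. One small streamlining: the ``main point'' you verify by contradiction --- that $r,s\notin D_R(v)$ --- is automatic once you invoke the parabolic coset decomposition, since $\{r,s\}\subseteq D_R(ws)$ forces the $\langle r,s\rangle$-component of $ws$ to equal $r_{\{r,s\}}$ and the complementary factor $v$ to be the minimal coset representative, which by definition is lengthened by every generator in $\{r,s\}$. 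Your contradiction argument is nevertheless correct, just redundant.
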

\begin{proof}
	This is a consequence of Lemma \ref{wordsincoxetergroup}.
\end{proof}

\begin{lemma}\label{Lemma: not both down}
	Assume that $(W, S)$ is $2$-spherical and that $m_{st} \geq 4$ for all $s\neq t \in S$. Suppose $w \in W$ and $s\neq t \in S$ with $\ell(ws) = \ell(w) +1 = \ell(wt)$. Then we have $\ell(w) +2 \in \{ \ell(wsr), \ell(wtr) \}$ for all $r\in S \backslash \{s, t\}$.
\end{lemma}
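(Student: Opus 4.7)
The plan is to argue by contradiction, assuming $\ell(wsr) = \ell(w) = \ell(wtr)$ and setting $n := \ell(w)$. The strategy is to derive two incompatible values for $\ell(wrt)$, one obtained from an analysis inside the $\{s,r\}$-residue of $w$ combined with Lemma \ref{wordsincoxetergroup}, and the other from a symmetric analysis inside the $\{t,r\}$-residue of $w$.

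First I would analyze the residue $R := w\langle s, r\rangle$. Let $z := \proj_R(1_W)$ and $\sigma := z^{-1} w \in \langle s, r\rangle$. The projection property translates our three length conditions into the dihedral conditions $\ell(\sigma) = k$, $\ell(\sigma s) = k+1$, $\ell(\sigma sr) = k$, where $k := n - \ell(z)$. The key dihedral observation is that $\sigma s$ admits a reduced expression ending in $s$ (the obvious one obtained by appending $s$ to any reduced expression of $\sigma$) and, because $\ell(\sigma sr) < \ell(\sigma s)$, also one ending in $r$; in the finite dihedral $\langle s, r\rangle$ the unique element with both $s$ and $r$ as right descents is the longest element $w_0^{(sr)}$ of length $m_{sr}$. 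Hence $\sigma s = w_0^{(sr)}$, so $k = m_{sr} - 1$, $\ell(z) = n - m_{sr} + 1$, and the unique right descent of $\sigma$ is $r$, giving $\ell(\sigma r) = m_{sr} - 2$.

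Next I would feed the chamber $z$ into Lemma \ref{wordsincoxetergroup}. Since $z$ is the projection onto $R$, we have $\ell(zs) = \ell(z)+1 = \ell(zr)$; the hypothesis $m_{sr} \geq 4$ ensures $\ell(\sigma r) = m_{sr}-2 \geq 2$, so the lemma applies with $w' := \sigma r \in \langle s, r\rangle$ and external letter $t \in S \setminus \{s,r\}$, yielding
\[
\ell(z \cdot \sigma r \cdot t) = \ell(z) + \ell(\sigma r) + 1 = (n - m_{sr} + 1) + (m_{sr} - 2) + 1 = n.
\]
Since $z\sigma = w$, the left-hand side equals $\ell(wrt)$; thus $\ell(wrt) = n$.

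Finally I would run the analogous analysis inside $R' := w\langle t, r\rangle$ using the second assumption $\ell(wtr) = n$. The same dihedral argument gives $\sigma' := (\proj_{R'}(1_W))^{-1} w$ with $\ell(\sigma') = m_{tr} - 1$, alternating in $\{t,r\}$ and ending in $r$; then successively appending $r$ and $t$ each cancels one letter, producing $\ell(\sigma' r) = m_{tr} - 2$ and $\ell(\sigma' r t) = m_{tr} - 3$. The projection property then forces
\[
\ell(wrt) = \ell(\proj_{R'}(1_W)) + \ell(\sigma' r t) = (n - m_{tr} + 1) + (m_{tr} - 3) = n - 2,
\]
contradicting $\ell(wrt) = n$. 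The main obstacle is the first step---the precise dihedral identification $\sigma s = w_0^{(sr)}$ together with the computation $\ell(\sigma r) = m_{sr} - 2$; everything afterwards is bookkeeping. The hypothesis $m_{st} \geq 4$ intervenes in two crucial places: it makes the Coxeter diagram complete (so Lemma \ref{wordsincoxetergroup} is available), and it guarantees $\ell(\sigma r) \geq 2$, which is exactly the length condition needed to invoke that lemma.
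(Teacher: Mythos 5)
Your proof is correct. The paper does not prove this lemma directly; it simply cites an external reference (\cite[Lemma $2.9$]{BiCoxGrowth}), so there is no in-paper argument to compare against. Your argument supplies a self-contained derivation (modulo Lemma \ref{wordsincoxetergroup}, which the paper also just cites), and the key steps all hold up: the projection of $1_W$ onto the rank-$2$ residue $w\langle s,r\rangle$ translates the three length conditions into dihedral data; the element $\sigma s$ having two right descents in the finite dihedral group $\langle s,r\rangle$ forces it to be the longest element, from which $\ell(\sigma) = m_{sr}-1$ and the unique right descent of $\sigma$ being $r$ follow; Lemma \ref{wordsincoxetergroup} is applicable because $m_{sr} \geq 4$ gives $\ell(\sigma r) = m_{sr}-2 \geq 2$ and completeness of the diagram; and the two resulting values $\ell(wrt)=n$ (via the $\{s,r\}$-residue plus the external-letter lemma) and $\ell(wrt)=n-2$ (via direct projection to the $\{t,r\}$-residue, valid since $wrt$ lies in that residue) are indeed incompatible. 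One small remark on your closing sentence: completeness of the Coxeter diagram only requires $m_{pq} \geq 3$; the genuine bottleneck forcing $m_{pq} \geq 4$ is the bound $\ell(\sigma r) \geq 2$ needed to invoke Lemma \ref{wordsincoxetergroup}.
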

\begin{proof}
	This is \cite[Lemma $2.9$]{BiCoxGrowth}.
\end{proof}

\begin{lemma}\label{mingallinrep}
	Suppose that $m_{st} = 4$ for all $s\neq t\in S$. Let $r, s, t \in S$ be pairwise distinct and let $p \in S \backslash \{s, t\}$. Let $H = (d_0, \ldots, d_4)$ be a minimal gallery of type $(r, s, t, p)$ and let $\alpha \in \Phi$ with $\{ d_0, d_1 \} \in \partial \alpha$ and $d_0 \in \alpha$. Let $\beta$ be a root containing $d_0$ and $\{ \{d_2, d_3\}, \{d_3, d_4\} \} \cap \partial \beta \neq \emptyset$ Then we have $\alpha \subsetneq \beta$.
\end{lemma}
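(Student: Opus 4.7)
The plan is to prove $\alpha\subsetneq\beta$ by left-translating the gallery so that $d_0=1_W$, identifying $\alpha$ and $\beta$ explicitly, and then excluding any chamber $w\in\alpha\setminus\beta$ via length inequalities provided by Lemma \ref{wordsincoxetergroup}. After translation one has $d_1=r$, $d_2=rs$, $d_3=rst$, $d_4=rstp$ and $\alpha=\alpha_r$; reading off which positive root has the panel prescribed in $\partial\beta$ on its wall yields $\beta=rs\alpha_t$ if $\{d_2,d_3\}\in\partial\beta$ (Case~1), and $\beta=rst\alpha_p$ if $\{d_3,d_4\}\in\partial\beta$ (Case~2). Both roots contain $1_W$ and their walls differ, so $\alpha\neq\pm\beta$, and it therefore suffices to prove $\alpha\subseteq\beta$.

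I argue by contradiction: suppose $w\in\alpha\setminus\beta$, put $u:=w^{-1}$ and $v:=ur$. Taking inverses in the defining inequalities of $w\in\alpha$ and $w\notin\beta$ translates the hypotheses into
\begin{align*}
\ell(vr) &< \ell(v), & \ell(vst) &< \ell(vs) && \text{(Case~1),} \\
\ell(vr) &< \ell(v), & \ell(vstp) &< \ell(vst) && \text{(Case~2),}
\end{align*}
so $r$ is a right descent of $v$. I split according to the right descents of $v$ in $\{s,t\}$. If $s$ is also a right descent, the standard parabolic decomposition gives $v=v'\cdot rsrs$ with $\ell(v')=\ell(v)-4$ and $\ell(v'r)=\ell(v's)=\ell(v')+1$; using $rsrs\cdot st=rsrt$ together with Lemma \ref{wordsincoxetergroup} (first part, applied with $w'=rsr$ and $r'=t$) one obtains $\ell(vst)=\ell(v')+4=\ell(v)$, contradicting Case~1 directly, while the second part with $f=p$ yields $\ell(vstp)=\ell(v)+1>\ell(vst)$, contradicting Case~2. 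If neither $s$ nor $t$ is a right descent of $v$, then $\ell(vs)=\ell(vt)=\ell(v)+1$, and the first part of Lemma \ref{wordsincoxetergroup} with $w'=st$ and $r'\in\{r,p\}$ gives $\ell(v\cdot st\cdot r')=\ell(v)+3$, forcing $\ell(vst)\geq\ell(v)+2$ and hence a contradiction in Case~1; choosing $r'=p$ moreover gives $\ell(vstp)=\ell(v)+3>\ell(vst)$, contradicting Case~2. Finally, if $t$ is a right descent but $s$ is not, then $v=v'\cdot rtrt$ analogously; the second part of Lemma \ref{wordsincoxetergroup} (with $w'=rtrt$, $r'=s$, $f=t$) gives $\ell(vst)=\ell(v')+6=\ell(v)+2$, contradicting Case~1, and the same lemma with $f=p$ shows that $t$ and $p$ are not right descents of $vs$; a second application of Lemma \ref{wordsincoxetergroup} to $vs$ (with $s'=t$, $t'=p$, $w'=tp$ and $r'\in S\setminus\{t,p\}$, nonempty since $|S|\geq 3$) then yields $\ell(vstp)=\ell(v)+3>\ell(vst)$, contradicting Case~2.

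The main technical obstacle is the last sub-case in Case~2, where $v$ has both $r$ and $t$ as right descents: there Lemma \ref{wordsincoxetergroup} must be iterated, first at $v'$ (to compute $\ell(vs)$, $\ell(vst)$ and $\ell(vsp)$) and then at $vs$ (to reach $\ell(vstp)$), since a single application of the lemma controls lengths only up to two extra letters. Throughout, the hypothesis $m_{st}=4$—in particular $m_{st}\neq 3$—is essential, as the argument repeatedly relies on the second part of Lemma \ref{wordsincoxetergroup}.
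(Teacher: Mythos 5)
Your proof is correct, but it follows a genuinely different route from the paper's. The paper proves the lemma \emph{geometrically}: it places everything in the canonical linear (Tits) representation of $(W,S)$, writes down the vectors corresponding to $\alpha$ and $\beta$ (after the same normalization $d_0=1_W$ you use), computes the symmetric bilinear form $(e_r,\sigma(rs)(e_t))$ and $(e_r,\sigma(rst)(e_p))$ explicitly — each turns out to exceed $1$ — and then invokes \cite[Lemma~2.77]{AB08} to conclude $o(r_\alpha r_\beta)=\infty$. Since $\{\alpha,\beta\}$ is a prenilpotent pair of positive roots, Lemma~\ref{Lemma: nested infinite order} then yields nestedness. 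Your proof stays entirely inside the combinatorics of the length function: you normalize to $d_0=1_W$, identify $\alpha=\alpha_r$ and $\beta\in\{rs\alpha_t,\,rst\alpha_p\}$, and rule out any chamber in $\alpha\setminus\beta$ by a right-descent case analysis driven by repeated applications of Lemma~\ref{wordsincoxetergroup}, using the parabolic decomposition $v=v'\cdot r_{\{r,s\}}$ (resp.\ $v'\cdot r_{\{r,t\}}$) where $r_{\{r,s\}}=rsrs$ since $m_{rs}=4$. What the paper's argument buys is brevity and the stronger intermediate conclusion that $r_\alpha r_\beta$ has infinite order, obtained by a two-line bilinear-form calculation; what yours buys is a self-contained combinatorial proof that avoids the linear representation entirely, at the cost of a three-way case split and iterated length bookkeeping. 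Both approaches hinge on $m_{st}=4$ concretely: the paper through the explicit value $\cos(\pi/4)=\tfrac{\sqrt2}{2}$ pushing the form past $1$, yours through the explicit shape of the longest rank-$2$ element and, in the Case~2 branches, through the second part of Lemma~\ref{wordsincoxetergroup}, which requires $m_{pq}\neq 3$.
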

\begin{proof}
	We use the canonical linear representation of $(W, S)$ (cf.\ \cite[Ch.\ $2.5$]{AB08}). Let $V := \RR^S$ be the vector space over $\RR$ with standard basis $(e_s)_{s\in S}$ and let $(\cdot, \cdot)$ be the symmetric bilinear form on $V$ given by
	\allowdisplaybreaks
	\begin{align*}
		(e_s, e_t) := -\cos\left( \frac{\pi}{m_{st}} \right) = \begin{cases*}
			1 & \text{if } s=t, \\
			-\frac{\sqrt{2}}{2} & \text{else}.
		\end{cases*}
	\end{align*}
	Then $W$ acts on $V$ via $\sigma: W \to \mathrm{GL}(V), s \mapsto \left( \sigma_s: V \to V, x \mapsto x - 2(x, e_s) e_s \right)$ and $(\cdot, \cdot)$ is invariant under this action. Let $\alpha$ and $\beta$ be as in the statement. Without loss of generality we can assume $\alpha = \alpha_r$ and $\beta \in \{ rs\alpha_t, rst\alpha_p \}$. At first, we consider the case $\beta = rs\alpha_t$. We compute:
	\allowdisplaybreaks
	\begin{align*}
		(e_r, \sigma(rs)(e_t)) &= ( e_r, \sigma_r( \sigma_s(e_t) ) ) = ( \sigma_r(e_r), \sigma_s(e_t) ) = (-e_r, e_t + \sqrt{2} e_s) = \frac{\sqrt{2}}{2} +1 >1
	\end{align*}
	Now we assume $\beta = rst\alpha_p$ and we compute:
	\allowdisplaybreaks
	\begin{align*}
		(e_r, \sigma(rst)(e_p)) &= (e_r, \sigma_r(\sigma_s(\sigma_t(e_p)))) \\
		&= ( \sigma_s(-e_r), \sigma_t(e_p) ) \\
		&= (-e_r -2(-e_r, e_s)e_s, e_p -2(e_p, e_t)e_t) \\
		&= -(e_r, e_p) + 2(e_p, e_t)(e_r, e_t) + 2(e_r, e_s)(e_s, e_p) -4(e_r, e_s)(e_p, e_t)(e_s, e_t) \\
		&= -(e_r, e_p) +1 +1 +\sqrt{2} >1
	\end{align*}
	Using \cite[Lemma $2.77$]{AB08} we obtain that $o(r_{\alpha} r_{\beta}) = \infty$. As $\{ \alpha, \beta \}$ is a prenilpotent pair, Lemma \ref{Lemma: nested infinite order} yields that $\{ \alpha, \beta \}$ is a pair of nested roots and hence $\alpha \subsetneq \beta$.
\end{proof}

\subsection*{Roots in Coxeter systems}

\begin{convention}
	For the rest of this section we assume that $(W, S)$ is $2$-spherical and $m_{st} \geq 4$ for all $s\neq t \in S$.
\end{convention}

Let $R$ be a residue and let $\alpha \in \Phi_+$. Then we call $\alpha$ a \textit{simple root of $R$} if there exists $P \in \partial \alpha$ such that $P \subseteq R$ and $\proj_R 1_W = \proj_P 1_W$. In this case $R$ is also stabilized by $r_{\alpha}$ and hence $R \in \partial^2 \alpha$. For a positive root $\alpha \in \Phi_+$ we define $k_{\alpha} := \min \{ k\in \NN \mid \exists\, (c_0, \ldots, c_k) \in \mathrm{Min}: c_k \notin \alpha \}$. We remark that $k_{\alpha} = 1$ if and only if $\alpha$ is a simple root.

\begin{remark}\label{Remark: any non-simple root is non-simple root of some residue}
	Let $\alpha \in \Phi_+$ be a positive root with $k_{\alpha} >1$. Let $(c_0, \ldots, c_k) \in \mathrm{Min}$ be a minimal gallery with $k = k_{\alpha}$ and $c_k \notin \alpha$. Then $\{ c_{k-1}, c_k \} \in \partial \alpha$ and $\alpha$ is not a simple root of the rank $2$ residue containing $c_{k-2}, c_{k-1}, c_k$. In particular, there exists $R \in \partial^2 \alpha$ such that $\alpha$ is not a simple root of $R$.
\end{remark}

Let $\alpha \in \Phi_+$ be a root with $k_{\alpha} >1$ and let $R \in \partial^2 \alpha$ be a residue such that $\alpha$ is not a simple root of $R$. Let $P \neq P' \in \partial \alpha$ be the two parallel panels contained in $R$. Then $\ell(1_W, \proj_P 1_W) \neq \ell(1_W, \proj_{P'} 1_W)$ and we can assume that $\ell(1_W, \proj_P 1_W) < \ell(1_W, \proj_{P'} 1_W)$. Let $G = (c_0, \ldots, c_k) \in \mathrm{Min}$ be of type $(s_1, \ldots, s_k)$ such that $c_i = \proj_R 1_W$ for some $0 \leq i \leq k$, $c_i, \ldots, c_k \in R$, $c_{k-1} = \proj_P 1_W$ and $c_k \in P \backslash \{c_{k-1}\}$. For $P \neq Q := \{ x, y \} \in \partial \alpha$ with $x\in \alpha$ and $y \notin \alpha$ we let $P_0 = P, \ldots, P_n = Q$ and $R_1, \ldots, R_n$ be as in Lemma \ref{CM06Prop2.7}.

\begin{lemma}\label{corbasisroot}
	We have $\proj_{R_n} 1_W = \proj_{P_{n-1}} 1_W$, if at least one of the following holds:
	\begin{enumerate}[label=(\alph*)]
		\item $R_1 \neq R$ and $\ell(s_1 \cdots s_{k-1}r) = k$, where $\{r, s_k\}$ is the type of $R_1$.
		
		\item $n>1$.
	\end{enumerate}
\end{lemma}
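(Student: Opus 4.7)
The plan is to invoke Lemma \ref{projRiinduction}, which propagates the equality $\proj_{R_i} 1_W = \proj_{P_{i-1}} 1_W$ from any single index $i \in \{1, \ldots, n\}$ up to $i = n$. Consequently it suffices, in each of the two cases, to exhibit some such index.

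For case (a) I take $i = 1$. Since $P_0 = P = \{c_{k-1}, c_k\}$ has type $s_k$ and is contained in $R_1$, the residue $R_1$ of type $\{r, s_k\}$ coincides with $c_{k-1} \langle r, s_k \rangle$. By minimality of $G$ we have $\ell(c_{k-1} s_k) = \ell(c_k) = k = \ell(c_{k-1}) + 1$, and by the hypothesis of (a), $\ell(c_{k-1} r) = \ell(s_1 \cdots s_{k-1} r) = k = \ell(c_{k-1}) + 1$. Thus both generators of $R_1$ lengthen $c_{k-1}$, so $c_{k-1}$ is the unique shortest chamber of $R_1$ and $\proj_{R_1} 1_W = c_{k-1} = \proj_{P_0} 1_W$, as required.

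For case (b), with $n > 1$, I split according to whether $R_1 \neq R$ or $R_1 = R$. If $R_1 \neq R$, one aims to verify the length hypothesis of (a) automatically, reducing to the argument above; the input should be Lemma \ref{Lemma: not both down} applied at $c_{k-1}$, which (using the standing hypothesis $m_{st} \geq 4$ and $\ell(c_{k-1} s_k) = \ell(c_{k-1}) + 1$) excludes two simultaneous length drops and forces $\ell(c_{k-1} r) = \ell(c_{k-1}) + 1$. If instead $R_1 = R$, then $P_1 = P'$ is the panel of $R$ opposite to $P$ inside $R$, and one uses $i = 2$: writing $\{s_k, r_2\}$ for the type of $R_2$ (with $r_2 \neq r_0$ since $R_2 \neq R_1 = R$) and $u := \proj_{P'} 1_W \in \alpha$, one checks that both $u \mapsto u s_k$ (which crosses $P'$ from $\alpha$ into $-\alpha$) and $u \mapsto u r_2$ lengthen $u$, so that $u$ is the unique shortest chamber of $R_2$ and $\proj_{R_2} 1_W = u = \proj_{P_1} 1_W$. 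Lemma \ref{projRiinduction} then finishes the argument.

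The main obstacle will be the $R_1 = R$ sub-case of (b): verifying that the second generator $r_2$ of $R_2$ lengthens the chamber $u = \proj_{P'} 1_W$. The leverage comes from $R$ and $R_2$ being distinct rank-$2$ residues both containing $P' \in \partial \alpha$, combined with the hypothesis $m_{st} \geq 4$, which via Lemma \ref{Lemma: not both down} prevents two simultaneous length decreases and so pins down the projection onto $R_2$.
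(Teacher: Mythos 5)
Your case (a) is correct and matches the paper's reasoning: since $R_1 \neq R$, the length hypothesis says both generators $s_k, r$ of the type of $R_1$ increase the length of $c_{k-1}$, so $c_{k-1}=\proj_{R_1}1_W$, and Lemma~\ref{projRiinduction} propagates this to $i=n$.

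However, there is a genuine gap in the sub-case $R_1\neq R$ of (b). You claim that Lemma~\ref{Lemma: not both down} ``excludes two simultaneous length drops and forces $\ell(c_{k-1}r)=\ell(c_{k-1})+1$.'' That lemma has the form: \emph{if} $\ell(ws)=\ell(w)+1=\ell(wt)$ for two distinct $s,t$, \emph{then} at least one of $wsr, wtr$ is longer still. At $c_{k-1}$ you only know one generator lengthens ($s_k$, via $c_k$), while $s_{k-1}$ shortens ($c_{k-2}$ precedes $c_{k-1}$ in $G$), so the lemma's hypothesis is not met and it says nothing about the third generator $r$. Indeed $\ell(s_1\cdots s_{k-1}r)=k-2$ is a genuine possibility. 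The paper handles this case separately: it uses Lemma~\ref{wordsincoxetergroup} to deduce $k-1=i+1$ (where $c_i=\proj_R c_0$), then replaces $G$ by the minimal gallery running from $c_0$ through $d:=\proj_{R_1}c_0$ to $c_{k-1},c_k$, which puts one in the $R_1=R$ sub-case. Your ``automatic'' reduction to (a) therefore fails and a whole case is missing.

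In the $R_1=R$ sub-case your conclusion $\ell(ur_2)>\ell(u)$ is correct, but the cited lemma is again the wrong tool. The fact you need is exactly Lemma~\ref{wordsincoxetergroup}: write $u=w\,w'$ with $w:=\proj_R 1_W=c_i$ and $w'\in\langle\text{type of }R\rangle$; both generators of the type of $R$ lengthen $w$, and $\ell(w')=\ell(u)-i\geq k-(k-2)=2$, so the lemma yields $\ell(ur_2)=\ell(u)+1$ for any $r_2$ outside the type of $R$. Lemma~\ref{Lemma: not both down} does not apply because one of the two generators of $R$ shortens $u$ (as $u\neq\proj_R 1_W$). Also, the symbol $r_0$ in ``$r_2\neq r_0$'' is undefined in your text; you presumably mean the non-$s_k$ generator of the type of $R$.
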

\begin{proof}
	Suppose $R_1 \neq R$ and $\ell(s_1 \cdots s_{k-1}r) = k$. Then $\proj_{R_1} c_0 = \proj_{P_0} c_0$ and the claim follows from Lemma \ref{projRiinduction}. Now we suppose $n>1$. Assume that $R_1 = R$. Then Lemma \ref{wordsincoxetergroup} implies $\proj_{R_2} 1_W = \proj_{P_1} 1_W$ and the claim follows from Lemma \ref{projRiinduction}. Now we suppose $R_1 \neq R$. If $\ell(s_1 \cdots s_{k-1}r) = k$, the claim follows from assertion $(a)$. Thus we can assume that $\ell(s_1 \cdots s_{k-1}r) = k-2$. Then Lemma \ref{wordsincoxetergroup} yields $k-1 = i+1$, where $c_i = \proj_R c_0$. Define $d := \proj_{R_1} c_0$ and replace $G$ by a minimal gallery $(d_0 = c_0, \ldots, d, c_{k-1}, c_k)$. Now we are in the case $R_1 = R$ and the claim follows.
\end{proof}

\begin{lemma}\label{Lemma:Palpha}
	We have $k = k_{\alpha}$ and the panel $P_{\alpha} := P$ is the unique panel in $\partial \alpha$ with the property $\ell(1_W, \proj_{P_{\alpha}} 1_W) = k_{\alpha} -1$.
\end{lemma}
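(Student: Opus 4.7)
The plan is to reduce both assertions of the lemma to the single stronger statement that for every $Q \in \partial\alpha$ with $Q \neq P$, one has $\ell(1_W, \proj_Q 1_W) > \ell(1_W, \proj_P 1_W)$. Since the gallery $G$ already witnesses $k_\alpha \leq k = \ell(1_W, \proj_P 1_W) + 1$, this strict inequality forces $\ell(1_W, \proj_P 1_W) = k_\alpha - 1$ and identifies $P$ as the unique minimizing panel, yielding $k = k_\alpha$ and uniqueness simultaneously.

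To establish the stronger statement, I would fix $Q \neq P$ in $\partial\alpha$, choose a chain $P = P_0, \ldots, P_n = Q$ with associated rank-2 residues $R_1, \ldots, R_n$ as in Lemma \ref{CM06Prop2.7}, and induct on $n$. In the inductive step $n \geq 2$, Lemma \ref{corbasisroot}(b) gives $\proj_{R_n} 1_W = \proj_{P_{n-1}} 1_W$; since $P_{n-1}$ and $P_n$ are opposite parallel panels in the rank-2 residue $R_n$ whose Coxeter integer $m$ satisfies $m \geq 4$, the opposite-panel distance inside $R_n$ is $m - 1 \geq 3$, which yields $\ell(1_W, \proj_Q 1_W) = \ell(1_W, \proj_{P_{n-1}} 1_W) + (m - 1)$. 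The inductive hypothesis $\ell(1_W, \proj_{P_{n-1}} 1_W) \geq \ell(1_W, \proj_P 1_W)$ then closes the step with a gain of at least $3$.

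The base case $n = 1$ splits into three sub-cases. If $R_1 = R$ then $Q = P'$ and the inequality is built into the setup. If $R_1 \neq R$ and the ascent condition $\ell(c_{k-1} r) = k$ of Lemma \ref{corbasisroot}(a) holds (where $\{r, s_k\}$ is the type of $R_1$), then $\proj_{R_1} 1_W = \proj_P 1_W$ and the same dihedral argument in $R_1$ produces the gap. If instead $\ell(c_{k-1} r) = k - 2$, I would work directly inside $R_1$: first observe $c_{k-1} r \in \alpha$ (since $r_\alpha$ swaps the $s_k$-panel at $c_{k-1}$ and so cannot also swap the $r$-panel for $r \neq s_k$), then identify $\proj_{R_1} 1_W = c_{k-1} r$, and finally measure along the half-path $\alpha \cap R_1$ to obtain $\ell(1_W, \proj_Q 1_W) = (k - 2) + (m' - 2) \geq k > \ell(1_W, \proj_P 1_W)$, where $m' \geq 4$ is the Coxeter integer of $R_1$.

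The hard part will be precisely this last sub-case. Since Lemma \ref{corbasisroot}(a) is unavailable, the projection $\proj_{R_1} 1_W$ has to be pinned down by hand, and one must rule out that it retreats even deeper inside $R_1$ than $c_{k-1} r$ (for instance to $c_{k-1} r s_k r$ of length $k - 4$, which for $m' = 4$ would push $\ell(1_W, \proj_Q 1_W)$ below $\ell(1_W, \proj_P 1_W)$ and destroy the inequality). This exclusion uses the standing hypothesis $m_{st} \geq 4$ together with a reduced-word argument on $c_{k-1}$: its two descents at $s_{k-1}$ and $r$ force the longest element $w_0(\{s_{k-1}, r\})$ to appear as a right factor, which in turn constrains how far $\proj_{R_1} 1_W$ can retreat inside $R_1$ and thereby forces the value $\ell(\proj_{R_1} 1_W) = k - 2$ used above.
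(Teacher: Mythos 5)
Your overall strategy matches the paper's exactly: reduce to the strict inequality $\ell(1_W, \proj_Q 1_W) > \ell(1_W, \proj_P 1_W)$ for $Q \neq P$, then induct on the length $n$ of the chain $P_0=P,\ldots,P_n=Q$ via Lemma~\ref{corbasisroot}. Where you add real value is the base case $n=1$, which the paper merely asserts ("$\geq k$"); your three sub-cases are the right decomposition, and your worry about $\proj_{R_1}1_W$ retreating beyond $c_{k-1}r$ is genuine — as you note, a retreat of even one more step into $R_1$ would, when $m'=4$, already push $\ell(1_W,\proj_Q 1_W)$ down to $k-2$ and falsify the lemma, so ruling it out is essential, not cosmetic.

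The one place where the proposal is underdeveloped is the closing paragraph. The observation that $c_{k-1}=w_1\cdot r_J$ with $J=\{s_{k-1},r\}$ and $w_1=\proj_{R_J}1_W$ is correct, but by itself a ``reduced-word argument'' does not settle the length of $c_{k-1}rs_k$: in a generic Coxeter group $c_{k-1}r=w_1(r_Jr)$ could perfectly well have $s_k$ as a descent. What makes the conclusion go through is Lemma~\ref{wordsincoxetergroup} (a non-elementary consequence of the complete-graph hypothesis, not a manipulation of reduced words): apply it with $w=w_1$, $\{s,t\}=J$, $w'=r_Jr\in\langle J\rangle$ of length $m-1\geq 3\geq 2$, and $r=s_k\notin J$, to get $\ell(c_{k-1}rs_k)=\ell(w_1)+(m-1)+1=k-1$. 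This is exactly the missing step that pins down $\proj_{R_1}1_W=c_{k-1}r$ and makes your formula $(k-2)+(m'-2)\geq k$ legitimate. (Equivalently, one can first deduce $c_{k-2}=\proj_R 1_W$ from Lemma~\ref{wordsincoxetergroup} and then apply Lemma~\ref{Lemma: residues in pairs} with $w=c_{k-2}$.) Once you cite one of these two lemmas explicitly, the argument is complete and gives a fuller account of the base case than the paper does.
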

\begin{proof}
	We have $\ell(1_W, \proj_P 1_W) = k-1$. Thus it suffices to show $\ell(1_W, \proj_Q 1_W) >k-1$. For $n=1$ we obtain $\ell(1_W, \proj_Q 1_W) \geq k$. Now we assume $n>1$. Then we have $\proj_{R_n} 1_W = \proj_{P_{n-1}} 1_W$ by Lemma \ref{corbasisroot}. Since $Q \subseteq R_n$, we deduce $\ell(1_W, \proj_Q 1_W) \geq \ell(1_W, \proj_{R_n} 1_W) = \ell(1_W, \proj_{P_{n-1}} 1_W)$ and the claim follows by induction.
\end{proof}

\begin{lemma}\label{projgal}
	We define $R_{\alpha, Q}$ to be the residue $R_1$ if $R \neq R_1$ and $\ell(s_1 \cdots s_{k-1} r) = k-2$. In all other cases, we define $R_{\alpha, Q} := R$. Then there exists a minimal gallery $H = (d_0 = c_0, \ldots, d_m = \proj_Q c_0, y)$ with the following properties:
	\begin{itemize}
		\item There exists $0 \leq i \leq m$ such that $d_i = \proj_{R_{\alpha, Q}} 1_W$.
		
		\item For each $i+1 \leq j \leq m$ there exists $L_j \in \partial^2 \alpha$ with $\{ c_{j-1}, c_j \} \subseteq L_j$. In particular, we have $d_j \in \mathcal{C}(\partial^2\alpha)$.
	\end{itemize}
\end{lemma}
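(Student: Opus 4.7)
The plan is to build $H$ as a concatenation of three segments: a minimal gallery from $1_W$ to $\proj_{R_{\alpha, Q}} 1_W$, a middle part traversing the chain of rank-$2$ residues $R_1, R_2, \ldots, R_n$ from Lemma~\ref{CM06Prop2.7} via the gates $g_{j-1} := \proj_{P_{j-1}} 1_W$, and the final adjacency step across $Q$ from $g_n = \proj_Q 1_W$ to $y$. The cornerstone is the identification $\proj_{R_j} 1_W = g_{j-1}$: once this holds for one index $j$, Lemma~\ref{projRiinduction} propagates it along the chain, and the gate property of $g_{j-1}$ in $R_j$ then gives
\[
\ell(1_W, g_j) \;=\; \ell(1_W, g_{j-1}) + \ell(g_{j-1}, g_j).
\]
Concatenating minimal galleries inside each $R_j$ therefore produces a minimal gallery whose consecutive pairs each lie in some $R_j$, hence in $\partial^2 \alpha$.

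In the case $R_{\alpha, Q} = R$ I would take $d_j := c_j$ for $0 \leq j \leq i_0$ (writing $c_{i_0} = \proj_R 1_W$) and continue through the chain. If $R = R_1$, first use a minimal gallery inside $R = R_1$ from $c_{i_0} = \proj_{R_1} 1_W$ to $g_1$; Lemma~\ref{wordsincoxetergroup} (as in the proof of Lemma~\ref{corbasisroot}(b)) supplies $\proj_{R_2} 1_W = g_1$ whenever $n > 1$, after which Lemma~\ref{projRiinduction} carries the identification through $R_2, \ldots, R_n$. If instead $R \neq R_1$ and $\ell(s_1 \cdots s_{k-1} r) = k$, I would use the full prefix $(c_0, \ldots, c_{k-1})$ of $G$; the pairs between $c_{i_0}$ and $c_{k-1}$ lie inside $R \in \partial^2 \alpha$, the argument for Lemma~\ref{corbasisroot}(a) gives $\proj_{R_1} 1_W = c_{k-1}$, and Lemma~\ref{projRiinduction} then propagates as before.

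In the remaining case $R_{\alpha, Q} = R_1 \neq R$ with $\ell(s_1 \cdots s_{k-1} r) = k-2$, I would borrow the maneuver already used in the proof of Lemma~\ref{corbasisroot}: Lemma~\ref{wordsincoxetergroup} forces $i_0 = k-2$, and $d := \proj_{R_1} 1_W$ has length $k-2$ and is adjacent to $c_{k-1}$ in $R_1$, so $G$ may be replaced by a minimal gallery $G' = (c_0, \ldots, c_{k-3}, d, c_{k-1}, c_k)$ whose terminal rank-$2$ residue is $R_1 \in \partial^2 \alpha$. I would then set $d_{k-2} := d = \proj_{R_{\alpha, Q}} 1_W$, take a minimal gallery inside $R_1$ from $d$ to $g_1$, and continue through $R_2, \ldots, R_n$ as above. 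The replacement places us in the ``$R_1 = R$'' situation of Lemma~\ref{corbasisroot}, so Lemma~\ref{projRiinduction} applies for $n > 1$; the sub-case $n = 1$ is handled directly, since $\proj_Q 1_W = g_1 \in R_1$ and the gate property of $d$ in $R_1$ already renders the concatenation minimal.

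The main obstacle is the bookkeeping in this final case: one must verify that, after replacing $G$ by $G'$, the gate identification really does propagate along the chain $R_2, \ldots, R_n$, so that the total length of $H$ equals $\ell(1_W, \proj_Q 1_W) + 1 = \ell(1_W, y)$. Once this is confirmed by invoking Lemma~\ref{projRiinduction} at index $2$, the pair-in-$\partial^2 \alpha$ conclusion is automatic: every consecutive pair of $H$ past $d_i$ lies in one of $R$, $R_1, R_2, \ldots, R_n$, all of which belong to $\partial^2 \alpha$ by construction.
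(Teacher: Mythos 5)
Your argument is correct and follows the paper's proof in all essentials: the same case split on $R_{\alpha,Q}$, the same replacement $G\rightsquigarrow G'=(c_0,\ldots,c_{k-3},d,c_{k-1},c_k)$ in the case $R_{\alpha,Q}=R_1\neq R$ with $\ell(s_1\cdots s_{k-1}r)=k-2$, and the same reliance on Lemma~\ref{corbasisroot}, Lemma~\ref{projRiinduction} and Lemma~\ref{wordsincoxetergroup} to supply the gate identifications $\proj_{R_j}1_W=\proj_{P_{j-1}}1_W$ that drive the minimality argument. The only difference is cosmetic: you build the middle leg of $H$ by hand, chaining minimal galleries through $R_1,\ldots,R_n$ between consecutive gates, whereas the paper establishes the three-term length decomposition
\[
\ell(c_0,\proj_Q c_0)=\ell(c_0,\proj_{R_{\alpha,Q}}c_0)+\ell(\proj_{R_{\alpha,Q}}c_0,d)+\ell(d,\proj_Q c_0)
\]
and then simply quotes Lemma~\ref{CM05Lem2.3} to produce a suitable minimal gallery from $d$ to $\proj_Q c_0$ inside $\mathcal{C}(\partial^2\alpha)$.
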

\begin{proof}
	We define 
	\[ d := \begin{cases}
		\proj_{P_0} c_0 & \text{if } R \neq R_1 \text{ and } \ell(s_1 \cdots s_{k-1}r) = k, \\
		\proj_{P_1} c_0 & \text{else}.
	\end{cases} \]
	We first show that $\ell(c_0, \proj_Q c_0) = \ell(c_0, \proj_{R_{\alpha, Q}} c_0) +  \ell(\proj_{R_{\alpha, Q}} c_0, d) + \ell( d, \proj_Q c_0 )$. By definition we have $R_{\alpha, Q} = R_{\alpha, P_i}$ for all $1 \leq i \leq n$. We prove the hypothesis by induction on $n$. Suppose first $n=1$ and that one of the following hold:
	\begin{itemize}
		\item $R = R_1$;
		
		\item $R\neq R_1$ and $\ell(s_1 \cdots s_{k-1}r) = k-2$;
	\end{itemize}
	Then $Q = P_1 \subseteq R_{\alpha, Q}, d = \proj_Q c_0$ and the claim follows. We prove the case $R \neq R_1$ and $\ell(s_1 \cdots s_{k-1}r) = k$ together with the case $n>1$ simultaneously. Lemma \ref{corbasisroot} provides in both cases $\proj_{R_n} c_0 = \proj_{P_{n-1}} c_0$. If $n>1$, we have $R_{\alpha, Q} = R_{\alpha, P_{n-1}}$; if $n=1$ we have $P_{n-1} = P_0 \subseteq R_{\alpha, Q}$ and $d = \proj_{P_{n-1}} c_0$. This is used in the third equation below. We compute the following:
	\allowdisplaybreaks
	\begin{align*}
		\ell(c_0, \proj_Q c_0) &= \ell(c_0, \proj_{R_n} c_0) + \ell(\proj_{R_n}c_0, \proj_Q c_0) \\
		&= \ell(c_0, \proj_{P_{n-1}} c_0) + \ell(\proj_{P_{n-1}} c_0, \proj_Q c_0) \\
		&= \ell(c_0, \proj_{R_{\alpha, Q}} c_0) + \ell(\proj_{R_{\alpha, Q}} c_0, d) + \ell(d, \proj_{P_{n-1}} c_0) \\
		& \qquad + \ell(\proj_{P_{n-1}} c_0, \proj_Q c_0) \\
		&\geq \ell(c_0, \proj_{R_{\alpha, Q}} c_0) + \ell(\proj_{R_{\alpha, Q}} c_0, d) + \ell(d, \proj_Q c_0) \\
		&\geq \ell(c_0, \proj_Q c_0)
	\end{align*}
	Thus concatenating a minimal gallery from $c_0$ to $\proj_{R_{\alpha, Q}} c_0$, a minimal gallery from $\proj_{R_{\alpha, Q}} c_0$ to $d$ and a minimal gallery from $d$ to $\proj_Q c_0$ yields a minimal gallery from $c_0$ to $\proj_Q c_0$. Using Lemma \ref{CM05Lem2.3} there exists a minimal gallery from $d$ to $\proj_Q c_0$ such that every chamber of this gallery is contained in $\mathcal{C}(\partial^2 \alpha)$ and for two adjacent chambers there exists a residue in $\partial^2 \alpha$ containing both. Since $R_{\alpha, Q} \in \{R, R_1\} \subseteq \partial^2 \alpha$ and, as $R_{\alpha, Q}$ is convex, each chamber of a minimal gallery from $\proj_{R_{\alpha, Q}} c_0$ to $d$ is contained in $R_{\alpha, Q}$ the claim follows.
\end{proof}

\begin{remark}
	In the lemma we use the following notation: For a minimal gallery $G = (c_0, \ldots, c_k)$, $k \geq 1,$ we denote the unique root containing $c_{k-1}$ but not $c_k$ by $\alpha_G$.
\end{remark}

\begin{lemma}\label{prenilpotentrootsintersectinoneresidue}
	Suppose that $(W, S)$ is of type $(4, 4, 4)$ and suppose $S = \{ s_{k-1}, s_k, r \}$. Let $\beta \in \Phi_+ \backslash \{ \alpha_s \mid s\in S \}$ be a root with $k_{\beta} \leq k$, $o(r_{\alpha} r_{\beta}) < \infty$ and $R \notin \partial^2 \beta$. Moreover, we assume that $\ell(s_1 \cdots s_{k-1}r) = k$. Then one of the following hold:
	\begin{enumerate}[label=(\alph*)]
		\item $\beta = \alpha_F$, where $F$ is the minimal gallery of type $(s_1, \ldots, s_{k-1}, r)$;
		
		\item $\beta = \alpha_F$, where $F$ is the minimal gallery of type $(s_1, \ldots, s_{k-2}, s_k, s_{k-1}, r)$, and we have $\ell(s_1 \cdots s_{k-2} s_k r) = k-2$.
	\end{enumerate}
\end{lemma}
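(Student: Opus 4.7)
The plan is to pin down a single residue $R_\beta \in \partial^2\alpha \cap \partial^2\beta$, localise $R_\beta$ using $k_\beta \leq k$ to one of two explicit rank-$2$ residues, and then identify $\beta$ in each case.

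\textbf{Step 1 (the residue $R_\beta$).} Since $R \in \partial^2\alpha \setminus \partial^2\beta$, we have $\alpha \neq \pm\beta$. Combined with $o(r_\alpha r_\beta) < \infty$, Lemma~\ref{Lemma: nested infinite order}(a) gives $\partial^2\alpha \cap \partial^2\beta \neq \emptyset$. In type $(4,4,4)$ the only $J \subseteq S$ with $|J|=3$ is $S$ itself and $|W| = \infty$, so Lemma~\ref{Lemma: residues in the 2-boundary are parallel}(b) makes this intersection a single residue $R_\beta$, necessarily distinct from $R$.

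\textbf{Step 2 (localising $R_\beta$).} Because $r_\alpha$ fixes $P = P_\alpha$ setwise and left multiplication preserves residue types, both rank-$2$ residues through $P$, namely $R$ and $T := c_{k-1}\langle s_k, r\rangle$, lie in $\partial^2\alpha$; similarly those through $P'$ are $R$ and $T' := (c_{k-2}s_ks_{k-1})\langle s_k, r\rangle$. I claim $R_\beta \in \{T, T'\}$. If not, both panels $Q, Q' \subseteq R_\beta \cap \partial\alpha$ lie strictly beyond $P$ and $P'$ on the wall. Using Lemma~\ref{wordsincoxetergroup} at $c_{k-1}$ (with the hypothesis $\ell(s_1\cdots s_{k-1}r) = k$) to pin down projection lengths inside $T$ and $T'$, and iterating outward via Lemma~\ref{corbasisroot} and Lemma~\ref{projgal}, one obtains $\ell(\proj_Q 1_W) \geq k+2$ for any such $Q$. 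Feeding a length-$4$ minimal gallery of type $(s_k, r, s_{k-1}, s_k)$ starting at $c_{k-1}$ into Lemma~\ref{mingallinrep} then forces $\alpha \subsetneq \beta$ whenever $R_\beta$ lies this far out, which by Lemma~\ref{Lemma: nested infinite order}(a) contradicts $o(r_\alpha r_\beta) < \infty$.

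\textbf{Step 3 (reading off $\beta$).} Suppose first $R_\beta = T$. The wall $\partial\beta \cap T$ is one of the three walls of $T$ distinct from the $s_k$-wall containing $P$. Enumerating the three candidates and computing in each the length of the closest projection to $1_W$, the constraint $k_\beta \leq k$ together with Lemma~\ref{Lemma: not both down} applied at $c_{k-1}$ rules out all but the $r$-wall through $\{c_{k-1}, c_{k-1}r\}$. The hypothesis $\ell(s_1\cdots s_{k-1}r) = k$ then ensures that $(c_0, \ldots, c_{k-1}, c_{k-1}r)$ is a minimal gallery whose final crossing is this panel, so $\beta = \alpha_F$ for $F$ of type $(s_1,\ldots,s_{k-1},r)$, which is case (a). Suppose instead $R_\beta = T'$. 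A parallel enumeration at $T'$, keyed on $\proj_{T'} 1_W = c_{k-2}s_ks_{k-1}$, singles out the $r$-wall through $\{c_{k-2}s_ks_{k-1}, c_{k-2}s_ks_{k-1}r\}$; passing into the adjacent residue $c_{k-2}s_ks_{k-1}\langle s_{k-1}, r\rangle$ (which must not lie in $\partial^2\alpha$ by uniqueness of $R_\beta$) shows that $k_\beta = k$ is realised precisely when $\ell(c_{k-2}s_kr) = k-2$, which is the stated side condition. Under this condition the sequence $(c_0,\ldots,c_{k-2}, c_{k-2}s_k, c_{k-2}s_ks_{k-1}, c_{k-2}s_ks_{k-1}r)$ is a minimal gallery (apply Lemma~\ref{wordsincoxetergroup} at $c_{k-2}$ for minimality of the tail), and $\beta = \alpha_F$ for $F$ of that type, giving case (b).

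\textbf{Main obstacle.} The delicate part is Step 2: excluding residues $R_\beta$ further along $\partial\alpha$ than $T$ or $T'$. Projection lengths along the wall are not strictly monotone in the combinatorial distance from $P_\alpha$, so no naive metric argument on the wall graph suffices, and in particular the closest $\partial\beta$-panel to $1_W$ need not lie inside $R_\beta$. The cleanest route is to convert ``further along $\partial\alpha$'' into the nesting $\alpha \subsetneq \beta$ via Lemma~\ref{mingallinrep} applied to length-$4$ minimal galleries through $c_{k-1}$, and then turn that nesting into a contradiction with $o(r_\alpha r_\beta) < \infty$ via Lemma~\ref{Lemma: nested infinite order}(a).
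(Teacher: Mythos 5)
Your Steps~1 and the general plan (force $\partial^2\alpha\cap\partial^2\beta$ to be a single residue via Lemmas~\ref{Lemma: nested infinite order}(a) and~\ref{Lemma: residues in the 2-boundary are parallel}(b), localise it, read off $\beta$) agree in broad outline with the paper's proof, and you have correctly identified Step~2 as the crux. But Step~2 as written is not a proof: the inequality $\ell(\proj_Q 1_W)\geq k+2$ is asserted with only a gesture at ``iterating outward via Lemma~\ref{corbasisroot} and Lemma~\ref{projgal}'', and the passage from that to $\alpha\subsetneq\beta$ via Lemma~\ref{mingallinrep} does not go through directly. Lemma~\ref{mingallinrep} requires $\partial\beta$ to meet the specific length-$4$ gallery starting at $c_{k-1}$ in its last two panels; if $R_\beta$ is several residues out along $\partial\alpha$, $\partial\beta$ need not meet that particular gallery at all, and there is no stated transitivity principle that lets you chain the nestings. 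You flag exactly this issue in your ``Main obstacle'' paragraph, but leave it unresolved.

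The paper does not prove $R_\beta\in\{T,T'\}$ as an a priori fact and then enumerate. Instead it fixes a residue $C\in\partial^2\alpha\cap\partial^2\beta$, builds an explicit minimal gallery $H$ (via Lemma~\ref{projgal}) from $c_0$ through $\proj_{R_{\beta,Q''}}c_0$ to the far side of a $\partial\beta$-panel in $C$, extends it to reach a chamber $e\in C$ with $\ell(e)=\ell(\proj_C c_0)+4$, and then case-splits on whether $R=R_1$ (the first residue encountered travelling along $\partial\alpha$ from $P$ towards $C$). In each case it shows $H$ must cross $\partial\alpha_{G''}$ (resp.\ $\partial\alpha_{G'}$), extracts a residue in $\partial^2\beta\cap\partial^2\alpha_{G''}$ (resp.\ $\partial^2\beta\cap\partial^2\alpha_{G'}$), and concludes that $\{r_\alpha,r_{\alpha_{G''}},r_\beta\}$ (resp.\ $\{r_\alpha,r_{\alpha_{G'}},r_\beta\}$) is a reflection triangle. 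Lemma~\ref{reflectiontrianglechamber}(b) then pins the triangle's single chamber, which identifies $\beta$; the side condition $\ell(s_1\cdots s_{k-2}s_k r)=k-2$ in case~(b) comes out because the alternative would force $k_\beta=k+1$. The one genuinely ``bad'' subcase (where $(d_0,\ldots,d_{m-2})$ already crosses $\partial\alpha_{G'}$) is killed by a nesting argument of the flavour you propose, but the nesting obtained is $-\alpha\subseteq\beta$ (from Lemma~\ref{mingallinrep} applied to the non-simple roots of $R_{\beta,Q''}$), not $\alpha\subsetneq\beta$, and it is deployed only in that local subcase, not as the global exclusion mechanism.

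Step~3 is also underspecified: for the wall enumeration inside $T$ (and $T'$) you invoke Lemma~\ref{Lemma: not both down}, but the panels of the two ``wrong'' walls inside $T$ could a priori have $P_\beta$ outside $T$, so a projection computation confined to $T$ does not by itself bound $k_\beta$. So the proposal is a reasonable sketch of a plausible alternative route, but both of its substantive steps are gaps, and the paper's argument via $H$, reflection triangles, and Lemma~\ref{reflectiontrianglechamber}(b) is genuinely different and is what actually closes those gaps.
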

\begin{proof}
	Recall that $\alpha = \alpha_G$. As $R \in \partial^2 \alpha$, we have $\alpha \neq \pm \beta$. By Lemma \ref{Lemma: nested infinite order}$(a)$ there exists $C \in \partial^2 \alpha \cap \partial^2 \beta$. Then there exists a panel $Q' \in \partial \alpha$ which is contained in $C$. We let $\proj_{Q'} c_0 \neq y \in Q'$. Let $P_i, R_i$ as before (with $P_n = Q'$), let $G' := (c_0, \ldots, c_{k-1})$ and let $G'' := (c_0, \ldots, c_k, c_{k+1})$ be the minimal gallery of type $(s_1, \ldots, s_k, s_{k-1})$. Let $E$ be a minimal gallery from $c_0$ to $y$ as in Lemma \ref{projgal}. We can extend this minimal gallery (if necessary) to a minimal gallery from $c_0$ to $e \in C$, where $\ell(e) = \ell(\proj_C c_0) +4$. Let $Q'' \in \partial \beta$ be a panel contained in $C$ and let $\proj_{Q''} c_0 \neq y' \in Q''$. Let $H = (d_0 = c_0, \ldots, d_{m-2} = \proj_{R_{\beta, Q''}} c_0, \ldots, d_q := \proj_{Q''} d_0, d_{q+1} := y')$ be a minimal gallery as in Lemma \ref{projgal}. Then $m = k_{\beta} \leq k$. As before, we can extend $H$ (if necessary) to a minimal gallery from $d_0$ to $e$. Note that $R \neq C$ by assumption, and since $R \in \partial^2 \alpha_{G'} \cap \partial^2 \alpha_{G''}, R \notin \partial^2 \beta$ we have $\alpha_{G'} \neq \pm \beta \neq \alpha_{G''}$.
	\begin{enumerate}[label=(\roman*)]
		\item Assume that $R = R_1$: Since $R \in \partial^2 \alpha_{G''} \cap \partial^2 \alpha, C \in \partial^2 \alpha$ and $\alpha_{G''} \neq \pm \beta$, Lemma \ref{Lemma: residues in the 2-boundary are parallel}$(b)$ implies $C \notin \partial^2 \alpha_{G''}$ and hence the gallery $H$ has to cross the wall $\partial \alpha_{G''}$. Assume that $(d_0, \ldots, d_{m-2})$ crosses the wall $\partial \alpha_{G''}$. Let $1 \leq j \leq m-2$ be such that $\{ d_{j-1}, d_j \} \in \partial \alpha_{G''}$. Then $k = k_{\alpha_{G''}} \leq j \leq m-2 \leq k-2$ which is a contradiction. Thus the gallery $(d_0, \ldots, d_{m-2})$ does not cross the wall $\partial \alpha_{G''}$ and hence $(d_{m-1}, \ldots, d_{q+1})$ has to cross the wall $\partial \alpha_{G''}$. Let $m \leq j \leq q+1$ be such that $\{ d_{j-1}, d_j \} \in \partial \alpha_{G''}$. By Lemma \ref{projgal} there exists $L \in \partial^2 \beta$ such that $\{ d_{j-1}, d_j \} \subseteq L$. Then $L \in \partial^2 \beta \cap \partial^2 \alpha_{G''}$ and hence $o(r_{\alpha_{G''}} r_{\beta}) < \infty$. As $\partial^2 \alpha \cap \partial^2 \alpha_{G''} = \{R\} \neq \{C\} = \partial^2 \alpha \cap \partial^2 \beta$ (cf.\ Lemma \ref{Lemma: residues in the 2-boundary are parallel}$(b)$), we have $\partial^2 \alpha \cap \partial^2 \beta \cap \partial^2 \alpha_{G''} = \emptyset$ and hence $\{ r_{\alpha}, r_{\alpha_{G''}}, r_{\beta} \}$ is a reflection triangle. As $\proj_R c_0 \in R \cap \beta \neq \emptyset$ and $R \notin \partial^2 \beta$, we deduce $R \subseteq \beta$. As $e \in C \cap (-\alpha_{G''}) \neq \emptyset$ and $C \notin \partial^2 \alpha_{G''}$, we deduce $C \subseteq (-\alpha_{G''})$. As $L \in \partial^2 \alpha_{G''} \cap \partial^2 \beta, \{ d_{j-1}, d_j \} \subseteq L \cap \alpha$ and $L \notin \partial^2 \alpha$, we deduce $L \subseteq \alpha$. Thus $T := \{ \alpha, -\alpha_{G''}, \beta \}$ is a triangle. For $d \in W$ with $\delta(c_{k-2}, d) = s_k s_{k-1}$ we have $d\in \bigcap_{\gamma \in T} \gamma$ and Lemma \ref{reflectiontrianglechamber}$(b)$ implies $\bigcap_{\gamma \in T} \gamma = \{ d \}$. If $\ell(s_1 \cdots s_{k-2}s_kr) = k$, then $k_{\beta} = k+1$. Thus $\ell(s_1 \cdots s_{k-2}s_kr) = k-2$ and $(b)$ follows.
		
		\item Assume that $R \neq R_1$: Since $R \in \partial^2 \alpha_{G'} \cap \partial^2 \alpha$ and $R \neq C \in \partial^2 \alpha$, Lemma \ref{Lemma: residues in the 2-boundary are parallel}$(b)$ implies $C \notin \partial^2 \alpha_{G'}$ and hence $H$ has to cross the wall $\partial \alpha_{G'}$. Suppose that $(d_0, \ldots, d_{m-2})$ does not cross the wall $\partial^2 \alpha_{G'}$. Replacing $\alpha_{G''}$ by $\alpha_{G'}$ in $(i)$ we obtain that $T := \{ \alpha, -\alpha_{G'}, \beta \}$ is a triangle. Using Lemma \ref{reflectiontrianglechamber}$(b)$, we have $\bigcap_{\gamma \in T} \gamma = \{ c_{k-1} \}$ and hence $(a)$ follows. Now we suppose that $(d_0, \ldots, d_{m-2})$ crosses the wall $\partial \alpha_{G'}$ and let $1 \leq j \leq m-2$ be such that $P' := \{ d_{j-1}, d_j \} \in \partial \alpha_{G'}$. Note that $1 \leq m-2 \leq k-2$ and hence $k \geq 3$. Let $Z$ be the $\{ s_{k-1}, r \}$-residue containing $c_{k-2}$. Then $\alpha_{G'}$ is not a simple root of $Z$ and hence $k_{\alpha_{G'}} \in \{ k-2, k-1 \}$. This implies $k-2 \leq k_{\alpha_{G'}} \leq j \leq m-2 \leq k-2$. Lemma \ref{Lemma:Palpha} implies $P' = P_{\alpha_{G'}}$ and hence $P'$ is contained in $Z$. Moreover, we have $j = m-2$ and $R_{\beta, Q''} = R_{\{r, s_k\}}(d_j)$. Both non-simple roots of $R_{\beta, Q''}$ contain $-\alpha$ by Lemma \ref{mingallinrep}. As one of them is equal to $\beta$, we have a contradiction. \qedhere
	\end{enumerate}
\end{proof}

\section{Commutator blueprints}\label{Section: commutator blueprints}

\begin{convention}
	From now on we assume $m_{st} \in \{2, 3, 4, \infty\}$ for all $s\neq t\in S$.
\end{convention}

We let $\mathcal{P}$ be the set of prenilpotent pairs of positive roots. For $w\in W$ we define $\Phi(w) := \{ \alpha \in \Phi_+ \mid w \notin \alpha \}$. Let $G = (c_0, \ldots, c_k) \in \mathrm{Min}$ and let $(\alpha_1, \ldots, \alpha_k)$ be the sequence of roots crossed by $G$. We define $\Phi(G) := \{ \alpha_i \mid 1 \leq i \leq k \}$. Using the indices we obtain an ordering $\leq_G$ on $\Phi(G)$ and, in particular, on $[\alpha, \beta] = [\beta, \alpha] \subseteq \Phi(G)$ for all $\alpha, \beta \in \Phi(G)$. Note that $\Phi(G) = \Phi(w)$ holds for every $G \in \mathrm{Min}(w)$. We abbreviate $\mathcal{I} := \{ (G, \alpha, \beta) \in \mathrm{Min} \times \Phi_+ \times \Phi_+ \mid \alpha, \beta \in \Phi(G), \alpha \leq_G \beta \}$. 

Given a family $\left(M_{\alpha, \beta}^G \right)_{(G, \alpha, \beta) \in \mathcal{I}}$, where $M_{\alpha, \beta}^G \subseteq (\alpha, \beta)$ is ordered via $\leq_G$. For $w\in W$ we define the group $U_w$ via the following presentation:
\[ U_w := \left\langle \{ u_{\alpha} \mid \alpha \in \Phi(w) \} \;\middle|\; \begin{cases*}
	\forall \alpha \in \Phi(w): u_{\alpha}^2 = 1, \\
	\forall (G, \alpha, \beta) \in \mathcal{I}, G \in \mathrm{Min}(w): [u_{\alpha}, u_{\beta}] = \prod\nolimits_{\gamma \in M_{\alpha, \beta}^G} u_{\gamma}
\end{cases*} \right\rangle
\]
Here the product $\prod\nolimits_{\gamma \in M_{\alpha, \beta}^G} u_{\gamma}$ is understood to be ordered via the ordering $\leq_G$, i.e.\ if $(G, \alpha, \beta) \in \mathcal{I}$ with $G \in \mathrm{Min}(w)$ and $M_{\alpha, \beta}^G = \{ \gamma_1 \leq_G \ldots \leq_G \gamma_k \} \subseteq (\alpha, \beta) \subseteq \Phi(G)$, then $\prod\nolimits_{\gamma \in M_{\alpha, \beta}^G} u_{\gamma} = u_{\gamma_1} \cdots u_{\gamma_k}$. Note that there could be $G, H \in \mathrm{Min}(w), \alpha, \beta \in \Phi(w)$ with $\alpha \leq_G \beta$ and $\beta \leq_H \alpha$. In this case we have two commutation relations, namely 
\begin{align*}
	&[u_{\alpha}, u_{\beta}] = \prod\nolimits_{\gamma \in M_{\alpha, \beta}^G} u_{\gamma} &&\text{and} &&[u_{\beta}, u_{\alpha}] = \prod\nolimits_{\gamma \in M_{\beta, \alpha}^H} u_{\gamma}
\end{align*}
From now on we will implicitly assume that each product $\prod\nolimits_{\gamma \in M_{\alpha, \beta}^G} u_{\gamma}$ is ordered via the ordering $\leq_G$.

\begin{definition}\label{Definition: commutator blueprint}
	A \emph{commutator blueprint of type $(W, S)$} is a family $\mathcal{M} = \left(M_{\alpha, \beta}^G \right)_{(G, \alpha, \beta) \in \mathcal{I}}$ of subsets $M_{\alpha, \beta}^G \subseteq (\alpha, \beta)$ ordered via $\leq_G$ satisfying the following axioms:
	\begin{enumerate}[label=(CB\arabic*)]
		\item Let $G = (c_0, \ldots, c_k) \in \mathrm{Min}$ and let $H = (c_0, \ldots, c_m)$ for some $1 \leq m \leq k$. Then $M_{\alpha, \beta}^H = M_{\alpha, \beta}^G$ holds for all $\alpha, \beta \in \Phi(H)$ with $\alpha \leq_H \beta$.
		
		\item Suppose $s\neq t \in S$ with $m_{st} < \infty$. For $(G, \alpha, \beta) \in \mathcal{I}$ with $G \in \mathrm{Min}(r_{\{s, t\}})$ we have
		\[ M_{\alpha, \beta}^G = \begin{cases}
			(\alpha, \beta) & \{ \alpha, \beta \} = \{ \alpha_s, \alpha_t \} \\
			\emptyset & \{ \alpha, \beta \} \neq \{ \alpha_s, \alpha_t \}
		\end{cases} \]
	
		\item For each $w\in W$ we have $\vert U_w \vert = 2^{\ell(w)}$, where $U_w$ is defined as above.
	\end{enumerate}
	
	A commutator blueprint $\mathcal{M} = \left(M_{\alpha, \beta}^G \right)_{(G, \alpha, \beta) \in \mathcal{I}}$ is called \emph{Weyl-invariant} if for all $w\in W$, $s\in S$, $G \in \mathrm{Min}_s(w)$ and $\alpha, \beta \in \Phi(G) \backslash \{ \alpha_s \}$ with $\alpha \leq_G \beta$ we have $M_{s\alpha, s\beta}^{sG} = sM_{\alpha, \beta}^G := \{ s\gamma \mid \gamma \in M_{\alpha, \beta}^G \}$.
\end{definition}

\begin{lemma}\label{Lemma: Definition of UG}
	Let $\mathcal{M} = \left( M_{\alpha, \beta}^G \right)_{(G, \alpha, \beta) \in \mathcal{I}}$ be a commutator blueprint of type $(W, S)$. Let $w\in W, G = (c_0, \ldots, c_k) \in \mathrm{Min}(w)$ and let $(\alpha_1, \ldots, \alpha_k)$ be the sequence of roots crossed by $G$. Then $\Phi(w) = \{ \alpha_1, \ldots, \alpha_k \}$ and the group $U_w$ has the following presentation:
	\[ U_G := \left\langle u_{\alpha_1}, \ldots, u_{\alpha_k} \;\middle|\; \forall 1 \leq i \leq j \leq k: \quad u_{\alpha_i}^2 = 1, \quad [u_{\alpha_i}, u_{\alpha_j}] = \prod\nolimits_{\gamma \in M_{\alpha_i, \alpha_j}^G} u_{\gamma} \right\rangle \]
\end{lemma}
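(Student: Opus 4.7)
The plan is to handle the two claims separately: the first is a standard fact about minimal galleries in Coxeter systems, while the second will be reduced to a dimension count via an induction on $\ell(w)$.

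For $\Phi(w) = \{\alpha_1, \ldots, \alpha_k\}$, I would argue that since each of the $\alpha_i$ is crossed exactly once by $G$, and $c_0 = 1_W \in \alpha_i$ while $c_k = w \notin \alpha_i$, the $\alpha_i$ lie in $\Phi(w)$. The reverse inclusion follows from the well-known equality $\vert \Phi(w) \vert = \ell(w) = k$. For the presentation, the generator sets of $U_G$ and $U_w$ then coincide, and the defining relations of $U_G$ form a subset of those of $U_w$ (the latter also imposes the commutator relations coming from \emph{every} $H \in \mathrm{Min}(w)$, not just from $G$). This yields a canonical surjection $\phi \colon U_G \twoheadrightarrow U_w$, and axiom (CB3) gives $\vert U_G \vert \geq \vert U_w \vert = 2^k$, so it suffices to prove $\vert U_G \vert \leq 2^k$.

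I would establish the upper bound by induction on $k = \ell(w)$; the case $k = 0$ is trivial. For the inductive step, let $G' := (c_0, \ldots, c_{k-1}) \in \mathrm{Min}(c_{k-1})$ and set $H := \langle u_{\alpha_1}, \ldots, u_{\alpha_{k-1}} \rangle \leq U_G$. Axiom (CB1) gives $M^{G'}_{\alpha_i, \alpha_j} = M^G_{\alpha_i, \alpha_j}$ for all $1 \leq i \leq j \leq k-1$, so every defining relation of $U_{G'}$ holds in $H$, producing a surjection $U_{G'} \twoheadrightarrow H$. Combining the inductive hypothesis with (CB3) applied to $c_{k-1}$ yields $\vert H \vert \leq \vert U_{G'} \vert = \vert U_{c_{k-1}} \vert = 2^{k-1}$.

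The main step is then to verify that $u_{\alpha_k}$ normalizes $H$. For each $1 \leq i < k$, the defining relation $[u_{\alpha_i}, u_{\alpha_k}] = \prod_{\gamma \in M^G_{\alpha_i, \alpha_k}} u_\gamma$ together with $u_{\alpha_i}^2 = 1 = u_{\alpha_k}^2$ rearranges to
\[
u_{\alpha_k} u_{\alpha_i} u_{\alpha_k} \;=\; u_{\alpha_i} \prod\nolimits_{\gamma \in M^G_{\alpha_i, \alpha_k}} u_\gamma.
\]
Crucially, $M^G_{\alpha_i, \alpha_k} \subseteq (\alpha_i, \alpha_k)$ consists of roots $\alpha_l \in \Phi(G)$ with $i < l < k$ in the $\leq_G$-ordering, so the right-hand side lies in $H$. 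Consequently $H \langle u_{\alpha_k} \rangle$ is a subgroup of $U_G$ containing every generator and hence equals $U_G$, giving $\vert U_G \vert \leq 2 \vert H \vert \leq 2^k$. Together with the surjection $\phi$, this forces $\phi$ to be an isomorphism, completing the proof. I do not expect a genuine obstacle here: the entire argument is a subgroup/index count, and the only non-routine step is the normalization identity above, whose content is simply that commutators with the "last" letter $u_{\alpha_k}$ land in $H$ because $M^G_{\alpha_i, \alpha_k}$ involves only intermediate roots of $G$.
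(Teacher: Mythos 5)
Your proof is correct. The paper itself does not give an argument for this lemma—it simply cites it from \cite{BiRGD}—so there is no in-text proof to compare against; but your argument is self-contained, uses only the stated axioms, and closely parallels the technique the paper does use later in Lemma \ref{Lemma: Conditions to extend U_w}, namely peeling off the last root of the gallery and showing $U_G = H\langle u_{\alpha_k}\rangle$ with $u_{\alpha_k}$ normalizing $H$. The one point worth making fully explicit (which you implicitly use) is that the orderings $\leq_{G'}$ and $\leq_G$ agree on $\Phi(G')=\{\alpha_1,\ldots,\alpha_{k-1}\}$, so that the ordered products appearing in the (CB1)-matched relations really coincide, and that any $\gamma\in M^G_{\alpha_i,\alpha_k}\subseteq(\alpha_i,\alpha_k)$ is some $\alpha_l$ with $i<l<k$ because a minimal gallery crosses each wall at most once; both are straightforward, and you address the second. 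The only cosmetic difference from the paper's style is that you obtain equality $|U_G|=2^k$ by squeezing between the surjection $U_G\twoheadrightarrow U_w$ and the index bound $|H\langle u_{\alpha_k}\rangle|\leq 2|H|$, whereas the argument in Lemma \ref{Lemma: Conditions to extend U_w} constructs the $\ZZ_2$-extension as an explicit semidirect product; your route is leaner here because (CB3) hands you the target cardinality for free.
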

\begin{proof}
	This is \cite[Lemma $3.6$]{BiRGD}.
\end{proof}

\subsection*{Pre-commutator blueprints}

\begin{definition}
	A \emph{pre-commutator blueprint of type $(W, S)$} is a family $\mathcal{M} = \left(M_{\alpha, \beta}^G \right)_{(G, \alpha, \beta) \in \mathcal{I}}$ of subsets $M_{\alpha, \beta}^G \subseteq (\alpha, \beta)$ ordered via $\leq_G$ satisfying (CB$1$) and (CB$2$) from Definition \ref{Definition: commutator blueprint}, and additionally the following axiom:
	\begin{enumerate}[label=(PCB)]
		\item For all $w\in W$ and $G\in \mathrm{Min}(w)$ the canonical homomorphism $U_G \to U_w$ is an isomorphism, where $U_G$ is defined as in Lemma \ref{Lemma: Definition of UG}.
	\end{enumerate}
\end{definition}

\begin{lemma}\label{Lemma: Conditions to extend U_w}
	Let $\mathcal{M} = \left(M_{\alpha, \beta}^G \right)_{(G, \alpha, \beta) \in \mathcal{I}}$ be a pre-commutator blueprint of type $(W, S)$. Then the following are equivalent:
	\begin{enumerate}[label=(\roman*)]
		\item $\mathcal{M}$ is a commutator blueprint.
		
		\item\label{Lemma: Conditions to extend U_w ii} Let $G = (c_0, \ldots, c_{k+1}) \in \mathrm{Min}$ and let $(\alpha_1, \ldots, \alpha_{k+1})$ be the sequence of roots crossed by $G$. Define $H := (c_0, \ldots, c_k)$ and assume that the following hold in the group $U_H$ for all $1 \leq i \leq j \leq k$:
		\begin{enumerate}[label=(C\arabic*)]
			\item $\left( \prod_{\gamma \in M_{\alpha_i, \alpha_{k+1}}^G} u_{\gamma} \right) \cdot \left( \prod_{\gamma \in M_{\alpha_i, \alpha_{k+1}}^G} \left( u_{\gamma} \prod_{\omega \in M_{\gamma, \alpha_{k+1}}^G} u_{\omega} \right) \right) = 1$;
			
			\item $\left( u_{\alpha_i} \prod_{\gamma \in M_{\alpha_i, \alpha_{k+1}}^G} u_{\gamma} \right)^2 = 1$;
			
			\item $\left[ u_{\alpha_i} \prod_{\gamma \in M_{\alpha_i, \alpha_{k+1}}^G} u_{\gamma}, u_{\alpha_j} \prod_{\gamma \in M_{\alpha_j, \alpha_{k+1}}^G} u_{\gamma} \right] = \prod_{\gamma \in M_{\alpha_i, \alpha_j}^G} \left( u_{\gamma} \prod_{\omega \in M_{\gamma, \alpha_{k+1}}^G} u_{\omega} \right)$;
		\end{enumerate}
	\end{enumerate}
\end{lemma}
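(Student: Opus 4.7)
The plan is to show both implications by realising $U_G$ as a semidirect product $U_H \rtimes_\phi \langle t \mid t^2 = 1 \rangle$, where $\phi$ is the would-be conjugation action of $u_{\alpha_{k+1}}$ on the subgroup generated by the first $k$ generators. Throughout, set $\alpha := \alpha_{k+1}$ and $\pi_i := \prod_{\gamma \in M_{\alpha_i, \alpha}^G} u_\gamma$ for each $i \leq k$. In this notation, conditions (C1), (C2), (C3) respectively encode $\phi^2 = \id_{U_H}$, the preservation by $\phi$ of the involutive relations $u_{\alpha_i}^2 = 1$, and the preservation by $\phi$ of the commutator relations of $U_H$. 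Hence (ii) becomes precisely the statement that the assignment $\phi \colon u_{\alpha_i} \mapsto u_{\alpha_i}\pi_i$ extends to an involutive automorphism of $U_H$.

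I first establish the unconditional upper bound $|U_G| \leq 2|U_H|$. By (CB1), every defining relation of $U_H$ holds among $u_{\alpha_1}, \ldots, u_{\alpha_k}$ in $U_G$, so $N := \langle u_{\alpha_1}, \ldots, u_{\alpha_k}\rangle$ is a homomorphic image of $U_H$. Every $\gamma \in M_{\alpha_i, \alpha}^G$ satisfies $\alpha_i <_G \gamma <_G \alpha$ and hence equals some $\alpha_j$ with $i < j \leq k$; so conjugation by $u_\alpha$ sends each generator of $N$ into $N$, making $N$ normal. The quotient $U_G/N$ is generated by the involution $u_\alpha N$, whence $|U_G| \leq 2|N| \leq 2|U_H|$.

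For $(i) \Rightarrow (ii)$, applying (PCB) and the equality $|U_u| = 2^{\ell(u)}$ to the words carried by $G$ and $H$ forces $|U_G| = 2|U_H|$; in particular $U_H \to N$ is an isomorphism and $u_\alpha \notin N$. Conjugation by $u_\alpha$ therefore defines an involutive automorphism $\phi \in \Aut(U_H)$ sending $u_{\alpha_i}$ to $u_{\alpha_i}\pi_i$. Using (CB1) to identify $M_{\alpha_i, \alpha_j}^H$ with $M_{\alpha_i, \alpha_j}^G$ for $i, j \leq k$, the three identities $\phi^2(u_{\alpha_i}) = u_{\alpha_i}$, $\phi(u_{\alpha_i})^2 = 1$, and $[\phi(u_{\alpha_i}), \phi(u_{\alpha_j})] = \phi(\prod_{\gamma \in M_{\alpha_i, \alpha_j}^H} u_\gamma)$ unfold into (C1), (C2), (C3) respectively.

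For $(ii) \Rightarrow (i)$, conditions (C2) and (C3) say exactly that $u_{\alpha_i} \mapsto u_{\alpha_i}\pi_i$ respects the two families of defining relations of $U_H$, so this assignment extends to an endomorphism $\phi$ of $U_H$; condition (C1) then yields $\phi^2 = \id_{U_H}$, so $\phi$ is an automorphism of order dividing $2$. The semidirect product $\tilde U_G := U_H \rtimes_\phi \langle t \mid t^2 = 1 \rangle$ has order $2|U_H|$, and a direct computation gives $[u_{\alpha_i}, t] = u_{\alpha_i}\phi(u_{\alpha_i}) = \pi_i$ in $\tilde U_G$. Hence the assignment $u_{\alpha_i} \mapsto u_{\alpha_i}$, $u_\alpha \mapsto t$ respects every defining relation of $U_G$ and yields a surjection $U_G \twoheadrightarrow \tilde U_G$, giving $|U_G| \geq 2|U_H|$; combined with the upper bound, $|U_G| = 2|U_H|$. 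Induction on $\ell(w)$, with (PCB) identifying $U_w$ with $U_G$ for a chosen $G \in \mathrm{Min}(w)$, then yields $|U_w| = 2^{\ell(w)}$ for every $w \in W$. The main difficulty is purely bookkeeping in this backward direction: each of (C1)--(C3) must be matched to exactly the right property of $\phi$, a matching which relies crucially on (CB1) to identify $M^G$ with $M^H$ over shared indices.
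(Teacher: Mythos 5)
Your proof is correct and takes essentially the same route as the paper: realizing $U_G$ as $U_H \rtimes_\phi \langle t \mid t^2 = 1\rangle$ and reading (C1)--(C3) as the conditions that $\phi\colon u_{\alpha_i} \mapsto u_{\alpha_i}\pi_i$ be an involutive automorphism of $U_H$. The paper simply asserts $U_G \cong U_H \rtimes_\phi \ZZ_2$ and dismisses $(i)\Rightarrow(ii)$ as routine, whereas you make both explicit via the normality-plus-surjectivity order count; this is a welcome filling-in of detail, not a different argument.
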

\begin{proof}
	One can easily check that the conditions in \ref{Lemma: Conditions to extend U_w ii} must be satisfied, if $\mathcal{M}$ is a commutator blueprint. Thus we can assume \ref{Lemma: Conditions to extend U_w ii}. It suffices to show $\vert U_w \vert = 2^{\ell(w)}$. We will show this by induction. For $\ell(w) \leq 1$ there is nothing to show. Thus we can assume $\ell(w) >1$. Let $G = (c_0, \ldots, c_{k+1}) \in \mathrm{Min}(w)$ and let $(\alpha_1, \ldots, \alpha_{k+1})$ be the sequence of roots crossed by $G$. We define $H := (c_0, \ldots, c_k)$. Using induction, we have $\vert U_H \vert = 2^k$. We will show that $U_G \cong U_H \rtimes_{\phi} \ZZ_2$, where $\phi \in \Aut(U_H)$ acts on $U_H$ as $u_{\alpha_{k+1}}$. We consider the mapping
	\[ u_{\alpha_{k+1}}: \{ u_{\alpha} \mid c_k \notin \alpha  \} \to U_H, u_{\alpha} \mapsto u_{\alpha} \prod\nolimits_{\gamma \in M_{\alpha, \alpha_{k+1}}^G} u_{\gamma} \]
	By (C2) and (C3), $u_{\alpha_{k+1}}$ extends to an endomorphism on $U_H$. Condition (C1) guarantees $u_{\alpha_{k+1}}^2 = \id$ and hence $u_{\alpha_{k+1}} \in \Aut(U_H)$. Thus we can define the semi-direct product $U_H \rtimes_{u_{\alpha_{k+1}}} \ZZ_2$. As $U_w \cong U_G \cong U_H \rtimes_{u_{\alpha_{k+1}}} \ZZ_2$, the claim follows.
\end{proof}

We will see in Theorem \ref{Theorem: pre-com = com} that there are weaker conditions than those in Lemma \ref{Lemma: Conditions to extend U_w}\ref{Lemma: Conditions to extend U_w ii}, which imply that a pre-commutator blueprint is a commutator blueprint. In this case the groups $U_w$ will be nilpotent of class at most $2$. For that we will need the following two preparatory lemmas.

\begin{lemma}\label{nilpotencyclass2}
	Let $G$ be a group and let $X\subseteq G$ be a symmetric generating set (i.e.\ $X = X^{-1}$) such that $[x, [y, z]] = 1$ holds for all $x, y, z \in X$. Then $G$ is of nilpotent of class at most $2$.
\end{lemma}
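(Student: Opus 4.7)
The plan is to show that the subgroup $Z := \langle [y, z] \mid y, z \in X \rangle$ is central in $G$ and contains $[G, G]$; this forces $[G,G] \subseteq Z(G)$, which is exactly the statement that $G$ is nilpotent of class at most $2$.

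The first step is immediate. By the hypothesis $[x, [y, z]] = 1$ for all $x, y, z \in X$, every generator $[y, z]$ of $Z$ commutes with every element of $X$. Since $X$ is a generating set, this upgrades to: every generator of $Z$ is central in $G$, and therefore $Z \subseteq Z(G)$.

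The second step, showing $[G, G] \subseteq Z$, is the main work but is a routine induction once one has the commutator identities
\begin{align*}
    [ab, c] &= a[b, c]a^{-1} \cdot [a, c], \\
    [a, bc] &= [a, b] \cdot b[a, c]b^{-1}.
\end{align*}
Because $X = X^{-1}$, every element of $G$ can be written as a finite product of elements of $X$, so we may do induction on such word lengths. I would first show by induction on the length of $g$ that $[g, x] \in Z$ for every $x \in X$: the base cases $g = 1$ and $g \in X$ are trivial or by definition, and the inductive step $g = ab$ gives $[ab, x] = a[b, x]a^{-1} \cdot [a, x]$, which lies in $Z$ because the two commutators are already in $Z$ by induction and, being central by the first step, the conjugation by $a$ is trivial. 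I would then repeat the induction on the length of $h$ to conclude $[g, h] \in Z$ for all $g, h \in G$, using the second identity and again the fact that $Z$ is central to collapse the conjugation.

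I do not foresee a real obstacle here; the only subtle point is making sure the induction is well-founded, which is exactly what the symmetry condition $X = X^{-1}$ is guaranteeing (otherwise elements of $G$ would only be expressible as words in $X \cup X^{-1}$ and one would have to worry about inverses). Combining the two steps, $[G, G]$ is contained in the central subgroup $Z$, so $G$ is nilpotent of class at most $2$.
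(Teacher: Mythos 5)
Your proof is correct, and it takes a somewhat different route than the paper's. The paper proves directly that $[x,[y,z]]=1$ for all $x,y,z\in G$ by a nested triple induction on the $X$-word lengths of $x$, $y$, and $z$, carrying the conjugation exponents through each step. You instead observe first that every basic commutator $[y,z]$ with $y,z\in X$ is already central (immediate from the hypothesis, since $X$ generates), so the subgroup $Z=\langle [y,z]\mid y,z\in X\rangle$ is contained in $Z(G)$; then you show $[G,G]\subseteq Z$ by a double induction, using the two-variable commutator identities, where the crucial simplification is that all the conjugations that appear act on elements already known to lie in the central subgroup $Z$ and therefore vanish. This reorganization buys you a cleaner induction: instead of pushing conjugation exponents through three nested inductions as the paper does, you eliminate them once and for all at the start. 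The underlying engine is the same (induction on word length in the symmetric generating set $X$), but your structuring of the argument into ``centrality of $Z$'' plus ``$[G,G]\subseteq Z$'' is a genuine and arguably more transparent repackaging.

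One small remark: the symmetry $X=X^{-1}$ is indeed what lets you induct on words in $X$ rather than in $X\cup X^{-1}$, as you say, though it is worth noting that the hypothesis $[x,[y,z]]=1$ for $x\in X$ automatically gives $[x^{-1},[y,z]]=1$ as well (if $x$ commutes with $w$ then so does $x^{-1}$), so the argument could be pushed through even without symmetry at the cost of slightly more bookkeeping in the base cases. Since the lemma grants symmetry, your streamlined induction is perfectly justified.
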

\begin{proof}
	Suppose $x, y, z \in G$ and let $x_1, \ldots, x_k, y_1, \ldots, y_l, z_1, \ldots, z_m \in X$ such that $x = x_1 \cdots x_k, y = y_1 \cdots y_l, z = z_1 \cdots z_m$. We will show $[x, [y, z]] = 1$. We first assume $l = 1 = m$. Induction on $k$ yields $[x, [y, z]] = [xx_k^{-1}, [y, z]]^{x_k} [x_k, [y, z]] = 1$. Now assume $l=1$. Induction on $m$ implies $[x, [y, z]] = [x, [y, z_m] [y, zz_m^{-1}]^{z_m}] = [x^{(z_m^{-1})}, [y, zz_m^{-1}]]^{z_m} [x, [y, z_m]]^{[y, zz_m^{-1}]^{z_m}} = 1$. Now induction on $l$ yields
	\[ [x, [y, z]] = [x, [yy_l^{-1}, z]^{y_l} [y_l, z]] = [x, [y_l, z]] [x^{(y_l^{-1})}, [yy_l^{-1}, z]]^{y_l[y_l, z]} = 1. \qedhere \]
\end{proof}

\begin{lemma}\label{commutator2}
	Let $G$ be a nilpotent group of class at most $2$ which is generated by a set $X$ of involutions (i.e.\ by elements $g\in G$ with $g^2 = 1$). Then $[g, h]^2 = 1$ holds for all $g, h \in G$.
\end{lemma}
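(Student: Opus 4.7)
The plan is to exploit the fact that in a nilpotent group of class at most $2$, the commutator subgroup is central, so commutators behave bilinearly and commute with everything. Combined with the fact that $X$ generates $G$ and consists of involutions, this should reduce the statement $[g,h]^2 = 1$ to the special case $g, h \in X$, which will follow from a direct computation using $y^2 = 1$.

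First I would record the standard commutator identities valid in a class-$2$ nilpotent group: since $[G,G]$ lies in the center, for all $a,b,c \in G$ we have $[ab,c] = [a,c][b,c]$ and $[a,bc] = [a,b][a,c]$, and moreover $[a,b^n] = [a,b]^n$ for every $n \in \ZZ$. These follow from the usual expansions $[ab,c] = [a,c]^b [b,c]$ and $[a,bc] = [a,c][a,b]^c$ together with the centrality of commutators.

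Next, I would handle the base case $x, y \in X$. Since $y \in X$ is an involution, $y^2 = 1$, so applying the above identity yields
\[
1 \;=\; [x, y^2] \;=\; [x,y]^2.
\]
Thus $[x,y]^2 = 1$ holds for every pair of elements of $X$.

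Finally, for arbitrary $g, h \in G$, write $g = x_1 \cdots x_k$ and $h = y_1 \cdots y_\ell$ with $x_i, y_j \in X$ (which is possible because $X$ generates $G$). Iterating bilinearity gives
\[
[g,h] \;=\; \prod_{i=1}^{k} \prod_{j=1}^{\ell} [x_i, y_j],
\]
where the order of the product is irrelevant because all factors are central. Squaring and using the base case yields
\[
[g,h]^2 \;=\; \prod_{i,j} [x_i, y_j]^2 \;=\; 1,
\]
which is the desired conclusion. There is no serious obstacle here; the only thing to be careful about is invoking centrality of $[G,G]$ consistently so that the bilinear expansions and the commutation of the factors are both justified.
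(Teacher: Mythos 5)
Your proof is correct, but it takes a noticeably different route from the paper's. The paper argues by induction on the combined word length $r+m$ of $g = f_1\cdots f_r$ and $h = h_1\cdots h_m$, peeling off one generator at a time via the expansions $[g,h] = [g,h_m][g,hh_m^{-1}]^{h_m}$ and $[g,h_m] = [gf_r^{-1},h_m]^{f_r}[f_r,h_m]$, then distributing the square using centrality; the base case rests on the observation that $[f_r,h_m]^2 = [f_r,[f_r,h_m]]$ for involutions, which vanishes since the class is at most $2$. You instead make the bilinearity of the commutator in a class-$2$ group explicit, expand $[g,h]$ in a single step as the central product $\prod_{i,j}[x_i,y_j]$, and reduce the squaring to $[x,y]^2 = [x,y^2] = 1$. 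Both arguments use precisely the same two ingredients (centrality of $[G,G]$ and $y^2=1$ for $y\in X$), but your version trades the paper's explicit induction for a closed-form expansion and replaces the slightly less obvious identity $[f_r,h_m]^2 = [f_r,[f_r,h_m]]$ with the more transparent $[x,y]^2 = [x,y^2]$. The one point that must not be glossed over — and you handle it — is that the unordered product $\prod_{i,j}[x_i,y_j]$ is well-defined only because all factors are central, so they pairwise commute.
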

\begin{proof}
	Let $f_1, \ldots, f_r, h_1, \ldots, h_m \in X$ such that $g = f_1 \cdots f_r, h = h_1 \cdots h_m$. We show the claim by induction on $r+m$. If $r+m \in \{0, 1\}$ the claim follows directly. Thus we assume $r+m \geq 2$. Again, if $0 \in \{r, m\}$ the claim follows directly. Thus we can assume $r, m \geq 1$. Using the nilpotency class we obtain
	\allowdisplaybreaks
	\begin{align*}
		[g, h]^2 &= \left( [g, h_m] [g, hh_m^{-1}]^{h_m} \right)^2 \\
		&= \left( [gf_r^{-1}, h_m]^{f_r} [f_r, h_m] [g, hh_m^{-1}] \right)^2 \\
		&= [gf_r^{-1}, h_m]^2 [f_r, h_m]^2 [g, hh_m^{-1}]^2
	\end{align*}
	Using the nilpotency class and the fact that $[f_r, h_m]^2 = [f_r, [f_r, h_m]]$ holds, the claim follows by induction.
\end{proof}

\begin{theorem}\label{Theorem: pre-com = com}
	Let $\mathcal{M} = \left( M_{\alpha, \beta}^G \right)_{(G, \alpha, \beta) \in \mathcal{I}}$ be a pre-commutator blueprint of type $(W, S)$. Then the following are equivalent:
	\begin{enumerate}[label=(\roman*)]
		\item $\mathcal{M}$ is a commutator blueprint of type $(W, S)$ and the groups $U_w$ are nilpotent of class at most $2$.
		
		\item\label{nilpotent conditions} Let $G = (c_0, \ldots, c_{k+1}) \in \mathrm{Min}$ and let $(\alpha_1, \ldots, \alpha_{k+1})$ be the sequence of roots crossed by $G$. For $H := (c_0, \ldots, c_k)$ and for all $1 \leq i < j \leq k +1$ we have the following:
		\begin{enumerate}[label=($2$-n$\arabic*$)]
			\item $\prod_{\gamma \in M_{\alpha_i, \alpha_{k+1}}^G} u_{\gamma} \in Z(U_H)$;
			
			\item $\left( \prod_{\gamma \in M_{\alpha_i, \alpha_{k+1}}^G} u_{\gamma} \right)^2 = 1$ holds in $U_H$;
			
			\item $\prod_{\gamma \in M_{\alpha_i, \alpha_j}^G} \left( u_{\gamma} \prod_{\delta \in M_{\gamma, \alpha_{k+1}}^G} u_{\delta} \right) =\prod_{\gamma \in M_{\alpha_i, \alpha_j}^G} u_{\gamma}$ holds in $U_H$.
		\end{enumerate}
	\end{enumerate}
\end{theorem}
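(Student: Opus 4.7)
The plan is to prove both directions, with $(i) \Rightarrow (ii)$ being a direct consequence of the nilpotency hypothesis, and $(ii) \Rightarrow (i)$ proceeding by induction on $\ell(w)$, reducing to Lemma \ref{Lemma: Conditions to extend U_w} and Lemma \ref{nilpotencyclass2}. Throughout I write $z_i := \prod_{\gamma \in M_{\alpha_i, \alpha_{k+1}}^G} u_\gamma \in U_H$.

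For $(i) \Rightarrow (ii)$, since $\mathcal{M}$ is a commutator blueprint, axiom (CB3) gives $\vert U_w \vert = 2^{\ell(w)}$; comparing presentations via (CB1) one sees that the canonical map $U_H \hookrightarrow U_w$ is injective. In $U_w$ the defining relations yield $z_i = [u_{\alpha_i}, u_{\alpha_{k+1}}]$, which is central by nilpotency class $\leq 2$, establishing (2-n1). Lemma \ref{commutator2} then gives $z_i^2 = 1$, which is (2-n2). For (2-n3), by (2-n1) each inner factor $\prod_{\delta \in M_{\gamma, \alpha_{k+1}}^G} u_\delta = [u_\gamma, u_{\alpha_{k+1}}]$ is central, so the big product on the left of (2-n3) factors as $\prod_{\gamma \in M_{\alpha_i, \alpha_j}^G} u_\gamma \cdot \prod_{\gamma} [u_\gamma, u_{\alpha_{k+1}}]$; by bilinearity of commutators modulo class $2$, the second factor equals $\bigl[\prod_\gamma u_\gamma, u_{\alpha_{k+1}}\bigr] = \bigl[[u_{\alpha_i}, u_{\alpha_j}], u_{\alpha_{k+1}}\bigr] = 1$.

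For $(ii) \Rightarrow (i)$, I would induct on $\ell(w)$; the cases $\ell(w) \leq 1$ are immediate. Assume the claim for $w' := c_k$, so $\vert U_H \vert = 2^k$ and $U_H$ has class $\leq 2$. The first task is to verify conditions (C1), (C2), (C3) of Lemma \ref{Lemma: Conditions to extend U_w}; that lemma then yields $\vert U_w \vert = 2^{k+1}$ and hence that $\mathcal{M}$ is a commutator blueprint. Using the centrality of $z_i$ from (2-n1), condition (C2) becomes $z_i^2 = 1$, which is (2-n2); condition (C3) reduces via $[u_{\alpha_i} z_i, u_{\alpha_j} z_j] = [u_{\alpha_i}, u_{\alpha_j}] = \prod_{\gamma \in M_{\alpha_i, \alpha_j}^G} u_\gamma$ (the last equality using the defining relation of $U_H$ together with (CB1)) to exactly (2-n3); and (C1) reduces via (2-n3) at $j = k+1$ to $z_i^2 = 1$, again (2-n2).

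For the second task, showing $U_w$ has class $\leq 2$, Lemma \ref{nilpotencyclass2} reduces the problem to verifying $[x, [y, z]] = 1$ for $x, y, z$ running over the generating set $\{u_{\alpha_1}, \ldots, u_{\alpha_{k+1}}\}$. From the first task we have $U_w = U_H \rtimes \langle u_{\alpha_{k+1}}\rangle$, and the conjugation action $\phi \in \Aut(U_H)$ is determined on generators by $u_\alpha \mapsto u_\alpha z_\alpha$. A case analysis then suffices: if $x, y, z \in U_H$ invoke induction; if $y, z \in U_H$ and $x = u_{\alpha_{k+1}}$, note that for any $y \in U_H$ one has $\phi(y) = y\xi$ with $\xi \in Z(U_H)$ (write $y$ as a product of generators and collect the central $z_\alpha$'s), so $\phi([y, z]) = [\phi(y), \phi(z)] = [y, z]$ and $u_{\alpha_{k+1}}$ centralizes $[y, z]$; if $y = u_{\alpha_{k+1}}$ and $z = u_\alpha \in U_H$, then $[y, z] = u_\alpha z_\alpha u_\alpha^{-1} = z_\alpha \in Z(U_H)$ commutes with every $x \in U_H$, while $\phi(z_\alpha) = z_\alpha$ follows from (2-n3) applied at $j = k+1$, so $u_{\alpha_{k+1}}$ centralizes $z_\alpha$ as well. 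The remaining subcases reduce to these by symmetry or are trivial when $y = z$. The main delicate point is the $\phi$-invariance $\phi(z_\alpha) = z_\alpha$, which is precisely what (2-n3) at $j = k+1$ supplies.
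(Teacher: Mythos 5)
Your proof is correct and follows essentially the same route as the paper: both directions reduce to Lemma \ref{Lemma: Conditions to extend U_w} and Lemma \ref{nilpotencyclass2}, and the translation of (C1)--(C3) into ($2$-n$1$)--($2$-n$3$) under the centrality hypothesis is exactly the one the paper uses. The one place you diverge in detail is the verification that $U_G$ has class at most $2$: the paper carries out the single computation $\bigl[[u_{\alpha_i},u_{\alpha_j}],u_{\alpha_{k+1}}\bigr]=1$ for $1\le i<j\le k+1$ directly from the defining relations and ($2$-n$3$), whereas you reorganize the same check as a case analysis on which generators involve $u_{\alpha_{k+1}}$, using the semidirect product structure and the $\phi$-invariance $\phi(z_\alpha)=z_\alpha$ (which is ($2$-n$3$) at $j=k+1$). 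These are the same argument viewed from two angles; your version makes the role of ($2$-n$1$) for the case $p\le k$, $j=k+1$ explicit, which the paper leaves implicit. Your fuller treatment of (i)$\Rightarrow$(ii), including the injectivity of $U_H\hookrightarrow U_w$ and the appeal to Lemma \ref{commutator2} for ($2$-n$2$), fills in what the paper dismisses as "not hard to see."
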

\begin{proof}
	Suppose that $\mathcal{M}$ is a commutator blueprint of type $(W, S)$ and that the groups $U_w$ are nilpotent of class at most $2$. Then $\mathcal{M}$ is a pre-commutator blueprint of type $(W, S)$ by Lemma \ref{Lemma: Definition of UG} and it is not hard to see that $\mathcal{M}$ satisfies ($2$-n$1$), ($2$-n$2$) and ($2$-n$3$) (cf.\ Lemma \ref{commutator2}).
	
	Now we suppose that $\mathcal{M}$ satisfies the conditions in \ref{nilpotent conditions}. We will apply Lemma \ref{Lemma: Conditions to extend U_w}. Let $G = (c_0, \ldots, c_{k+1}) \in \mathrm{Min}(w)$ be a minimal gallery, let $(\alpha_1, \ldots, \alpha_{k+1})$ be the sequence of roots crossed by $G$ and let $H := (c_0, \ldots, c_k)$. Note that (C1) holds by ($2$-n$3$) (applied with $j=k+1$) and ($2$-n$2$). Moreover, (C2) follows from ($2$-n$1$) and ($2$-n$2$). Using ($2$-n$3$) and ($2$-n$1$) one can show that also (C3) holds. Now Lemma \ref{Lemma: Conditions to extend U_w} implies that $\mathcal{M}$ is a commutator blueprint. It is left to show that the group $U_w \cong U_G$ is nilpotent of class at most $2$. We prove this by induction on $\ell(w)$. The claim is obvious for $\ell(w) \leq 1$. Thus we can assume $\ell(w) >1$. Using ($2$-n$3$), we compute for all $1 \leq i < j \leq k+1$:
	\begin{align*}
		\left[ \left[ u_{\alpha_i}, u_{\alpha_j} \right], u_{\alpha_{k+1}} \right] &= \left[ u_{\alpha_i}, u_{\alpha_j} \right]^{-1} \prod\nolimits_{\gamma \in M_{\alpha_i, \alpha_j}^G} u_{\alpha_{k+1}}^{-1} u_{\gamma} u_{\alpha_{k+1}} \\
		&= \left( \prod\nolimits_{\gamma \in M_{\alpha_i, \alpha_j}^G} u_{\gamma} \right)^{-1} \cdot \prod\nolimits_{\gamma \in M_{\alpha_i, \alpha_j}^G} \left( u_{\gamma} \prod\nolimits_{\delta \in M_{\gamma, \alpha_{k+1}}^G} u_{\delta} \right) = 1
	\end{align*}
	Now the claim follows from Lemma \ref{nilpotencyclass2}.
\end{proof}

\section{Examples of commutator blueprints}\label{Section: Examples}

\subsection*{Examples of universal type}

\begin{convention}
	In this subsection we assume that $(W, S)$ is of universal type (i.e.\ $m_{st} = \infty$ for all $s\neq t \in S$) and has rank at least $2$.
\end{convention}

\begin{remark}\label{Remark: PCB and type}
	Note that for $w\in W$ we have $\vert \mathrm{Min}(w) \vert = 1$. This implies that each family $\mathcal{M} = \left( M_{\alpha, \beta}^G \right)_{(G, \alpha, \beta) \in \mathcal{I}}$ of subsets $M_{\alpha, \beta}^G \subseteq (\alpha, \beta)$ ordered via $\leq_G$ satisfies (PCB). Moreover, we note that $(G, \alpha, \beta) \in \mathcal{I}$ implies $\alpha \subseteq \beta$.
\end{remark}

\begin{definition}
	Let $\alpha \in \Phi$ be a root. As $\vert \partial^2 \alpha \vert = 0$, we deduce $\vert \partial \alpha \vert = 1$ (cf.\ Lemma \ref{CM06Prop2.7}) and we call $c^{-1}d \in S$ the \emph{type} of $\alpha$, where $\{ c, d \} \in \partial \alpha$.
\end{definition}

\begin{definition}
	\begin{enumerate}[label=(\alph*)]
		\item Let $s\neq t \in S$, $k \in \NN$, $J \subseteq \{ 1, \ldots, k \}$ and let $(G, \alpha, \beta) \in \mathcal{I}$. Assume that there exists a minimal gallery $H = (c_0, \ldots, c_{2k+1})$ of type $(s, t, \ldots, s, t, s)$ between $\alpha$ and $\beta$ (i.e.\ $\alpha$ is the first and $\beta$ is the last root crossed by $H$) such that $s$ appears $k+1$ times and $t$ appears $k$ times in the type of $H$. Let $(\alpha_1 = \alpha, \ldots, \alpha_{2k+1} = \beta)$ be the sequence of roots crossed by $H$. Then we define
		\[ M(k, J, s, t)_{\alpha, \beta}^G := \{ \alpha_{2j} \mid j\in J \}  \]
		If there does not exist such a gallery between $\alpha$ and $\beta$, we define $M(k, J, s, t)_{\alpha, \beta}^G := \emptyset$.
		
		\item Let $s\neq t \in S$, let $K \subseteq \NN$ be non-empty and let $\mathcal{J} = (J_k)_{k\in K}$ be a family of subsets $J_k \subseteq \{ 1, \ldots, k \}$. For $(G, \alpha, \beta) \in \mathcal{I}$ we define
		\[ M(K, \mathcal{J}, s, t)_{\alpha, \beta}^G := \bigcup\nolimits_{k\in K} M(k, J_k, s, t)_{\alpha, \beta}^G \]
		Moreover, we define $\mathcal{M}(K, \mathcal{J}, s, t) := \left( M(K, \mathcal{J}, s, t)_{\alpha, \beta}^G \right)_{(G, \alpha, \beta) \in \mathcal{I}}$.
	\end{enumerate}
\end{definition}

\begin{remark}
	We note that for given $(G, \alpha, \beta) \in \mathcal{I}$ with $M(K, \mathcal{J}, s, t)_{\alpha, \beta}^G \neq \emptyset$, there exists a unique $k \in K$ such that $M(K, \mathcal{J}, s, t)_{\alpha, \beta}^G = M(k, J_k, s, t)_{\alpha, \beta}^G$.
\end{remark}

\begin{theorem}\label{Theorem: commutator blueprint}
	Let $s\neq t \in S$, let $K \subseteq \NN$ be non-empty and let $\mathcal{J} = (J_k)_{k\in K}$ be a family of subsets $J_k \subseteq \{ 1, \ldots, k \}$. Then $\mathcal{M}(K, \mathcal{J}, s, t)$ is a Weyl-invariant commutator blueprint and the groups $U_w$ are nilpotent of class at most $2$.
\end{theorem}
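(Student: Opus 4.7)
The plan is to verify that $\mathcal{M} := \mathcal{M}(K, \mathcal{J}, s, t)$ is a pre-commutator blueprint and then to apply Theorem \ref{Theorem: pre-com = com}, which will simultaneously establish that $\mathcal{M}$ is a commutator blueprint and that every $U_w$ is nilpotent of class at most $2$. The pre-commutator structure is essentially free: since $(W, S)$ is universal, $M_{\alpha, \beta}^G$ depends only on $\alpha$ and $\beta$, so (CB1) is immediate; (CB2) is vacuous because no $m_{st}$ is finite; and (PCB) follows from Remark \ref{Remark: PCB and type}. For Weyl-invariance I would observe that left multiplication by $s$ sends a gallery of type $(s, t, \ldots, s)$ between $\alpha$ and $\beta$ to one of the same type between $s\alpha$ and $s\beta$, with crossed roots transformed by $s$, giving $M_{s\alpha, s\beta}^{sG} = sM_{\alpha, \beta}^G$.

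The crux of the argument is a simple type-tracking observation: in a gallery of type $(s, t, s, t, \ldots, t, s)$ of length $2k+1$, the root crossed at step $i$ has type $s$ when $i$ is odd and type $t$ when $i$ is even. This yields two structural properties: every element of any $M_{\alpha, \beta}^G$ is a root of type $t$, and $M_{\alpha, \beta}^G$ is empty unless both $\alpha$ and $\beta$ are of type $s$. From the second property, for any minimal gallery $H$ and any $\alpha_i \in \Phi(H)$ of type different from $s$, the defining relations give $[u_{\alpha_i}, u_{\alpha_j}] = 1$ for all $j$, so $u_{\alpha_i}$ is central in $U_H$.

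With this in hand, the three conditions of Theorem \ref{Theorem: pre-com = com} all follow. Condition ($2$-n$1$) is precisely the centrality of $u_\gamma$ for each $\gamma \in M_{\alpha_i, \alpha_{k+1}}^G$, since these are roots of type $t$. Condition ($2$-n$2$) holds because the $u_\gamma$'s are commuting involutions, so the square of any product is trivial. For condition ($2$-n$3$), each $\gamma \in M_{\alpha_i, \alpha_j}^G$ has type $t$, so no gallery of type $(s, t, \ldots, s)$ can start at $\gamma$; hence $M_{\gamma, \alpha_{k+1}}^G = \emptyset$, and the inner products on the left-hand side collapse to the identity, matching the right-hand side. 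Theorem \ref{Theorem: pre-com = com} then delivers both conclusions. The only step that calls for genuine care is the type-tracking observation itself; the rest is a formal consequence, and I expect no serious obstacle.
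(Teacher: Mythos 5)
Your proposal is correct and follows the same route as the paper's proof: establish the pre-commutator blueprint axioms, make the type-tracking observation that elements of $M_{\alpha,\beta}^G$ are roots of type $t$ while $M_{\alpha,\beta}^G \neq \emptyset$ forces $\alpha, \beta$ of type $s$, and then verify the three conditions of Theorem \ref{Theorem: pre-com = com}. The paper states the verification is "straightforward" without spelling it out; your write-up supplies exactly those details (centrality of type-$t$ generators for ($2$-n$1$) and ($2$-n$2$), and $M_{\gamma,\alpha_{k+1}}^G = \emptyset$ for $\gamma$ of type $t$ for ($2$-n$3$)), and your Weyl-invariance argument matches the paper's.
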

\begin{proof}
	We abbreviate $\mathcal{M} := \mathcal{M}(K, \mathcal{J}, s, t)$ and $M_{\alpha, \beta}^G := M(K, \mathcal{J}, s, t)_{\alpha, \beta}^G$ for all $(G, \alpha, \beta) \in \mathcal{I}$. By definition, $\mathcal{M}$ satisfies (CB$1$) and (CB$2$), and by Remark \ref{Remark: PCB and type} it also satisfies (PCB). Hence $\mathcal{M}$ is a pre-commutator blueprint. We will apply Theorem \ref{Theorem: pre-com = com}. Thus we suppose $(G, \alpha, \beta) \in \mathcal{I}$ with $M_{\alpha, \beta}^G \neq \emptyset$. Then $\alpha, \beta$ are roots of type $s$ and every $\gamma \in M_{\alpha, \beta}^G$ is a root of type $t$. Now it is straight forward to verify that $\mathcal{M}$ satisfies the conditions in Theorem \ref{Theorem: pre-com = com}\ref{nilpotent conditions}. We infer that $\mathcal{M}$ is a commutator blueprint and the groups $U_w$ are nilpotent of class at most $2$. Moreover, $\mathcal{M}$ is Weyl-invariant, as $M_{\alpha, \beta}^G$ does only depend on the existence of a suitable gallery $H$ and not on $G$.
\end{proof}

\begin{definition}\label{Definition: New rank 2 example}
	Let $s_0, s_1 \in S$ be distinct. Let $(G, \alpha, \beta) \in \mathcal{I}$ and assume that there exists a minimal gallery $H = (c_0, \ldots, c_k)$ of type $(s, t, s, t, \ldots)$ between $\alpha$ and $\beta$. Let $(\alpha_1 = \alpha, \ldots, \alpha_k = \beta)$ be the sequence of roots crossed by $H$. Then we define:
	\begin{align*}
		k=4, s=s_0, t=s_1: &\quad M\left(s_0, s_1\right)_{\alpha_1, \alpha_4}^G = \{ \alpha_2, \alpha_3 \} \\
		k=5, s=s_0, t=s_1: &\quad M\left(s_0, s_1\right)_{\alpha_1, \alpha_5}^G = \{ \alpha_2, \alpha_3 \} \\
		k=5, s=s_1, t=s_0: &\quad M\left(s_0, s_1\right)_{\alpha_1, \alpha_5}^G = \{ \alpha_3, \alpha_4 \} \\
		k=6, s=s_1, t=s_0: &\quad M\left(s_0, s_1\right)_{\alpha_1, \alpha_6}^G = \{ \alpha_3, \alpha_4 \}
	\end{align*}
	Otherwise, we define $M\left(s_0, s_1\right)_{\alpha, \beta}^G := \emptyset$. We define $\mathcal{M}\left( s_0, s_1\right) := \left( M\left( s_0, s_1 \right)_{\alpha, \beta}^G \right)_{(G, \alpha, \beta) \in \mathcal{I}}$.
\end{definition}

\begin{theorem}\label{Theorem: exotic commutator blueprint}
	Let $s_0, s_1 \in S$ be distinct. Then $\mathcal{M}\left( s_0, s_1 \right)$ is a Weyl-invariant commutator blueprint of type $(W, S)$. Moreover, the groups $U_w$ are nilpotent of class at most $2$.
\end{theorem}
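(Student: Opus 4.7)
My plan is to proceed exactly as in the proof of Theorem \ref{Theorem: commutator blueprint}: verify that $\mathcal{M}(s_0, s_1)$ is a Weyl-invariant pre-commutator blueprint, then apply Theorem \ref{Theorem: pre-com = com} by checking the three conditions (2-n1), (2-n2), (2-n3). This simultaneously yields the commutator blueprint property and nilpotency of class at most $2$.

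The pre-commutator axioms are easy. Axiom (CB1) holds because $M(s_0, s_1)_{\alpha, \beta}^G$ depends only on the existence of a minimal gallery between $\alpha$ and $\beta$ whose type matches one of the four cases, and hence only on the pair $(\alpha, \beta)$. Axiom (CB2) is vacuous in universal type, and (PCB) is automatic from Remark \ref{Remark: PCB and type}. Weyl-invariance is immediate since the action of $W$ preserves minimal gallery types.

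Fix $G = (c_0, \ldots, c_{k+1}) \in \mathrm{Min}$ with roots $\alpha_1, \ldots, \alpha_{k+1}$, set $H = (c_0, \ldots, c_k)$, and fix $1 \leq i < j \leq k+1$. If $M := M(s_0, s_1)_{\alpha_i, \alpha_{k+1}}^G$ is empty, the three conditions are trivial. Otherwise $M = \{\gamma_1, \gamma_2\}$ with $\gamma_1 <_G \gamma_2$ two consecutive roots taken from the interior of the defining gallery, of types $s_1$ and $s_0$ respectively. The minimal gallery between $\gamma_1$ and $\gamma_2$ has length $2$, which never appears in Definition \ref{Definition: New rank 2 example}, so $u_{\gamma_1}$ and $u_{\gamma_2}$ commute in $U_H$ and $(u_{\gamma_1} u_{\gamma_2})^2 = 1$, giving (2-n2). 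Inspection of the four cases also gives $M_{\gamma_l, \alpha_{k+1}}^G = \emptyset$ for $l = 1, 2$, which settles the $j = k+1$ part of (2-n3).

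The main combinatorial obstacle is a \emph{shift-invariance} property: for every root $\delta$ distinct from $\gamma_1, \gamma_2$, the sets $M(s_0, s_1)_{\delta, \gamma_1}^G$ and $M(s_0, s_1)_{\delta, \gamma_2}^G$ (read with the appropriate ordering) coincide. I would establish this by a finite case analysis on the four types of defining galleries: passing from $\gamma_1$ to $\gamma_2$ shifts the length of the minimal gallery from $\delta$ to the endpoint by one and swaps its starting type between $s_0$ and $s_1$, and Definition \ref{Definition: New rank 2 example} is calibrated so that cases $1$ and $2$ (start $s_0$) pair up with cases $3$ and $4$ (start $s_1$) to give the same pair of inner roots in $G$. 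Granting this, (2-n1) follows: arguing by induction on $\ell(w)$, one may assume $U_H$ is already nilpotent of class at most $2$, so by Lemma \ref{commutator2} every commutator has order $2$; expanding $[u_\eta, u_{\gamma_1} u_{\gamma_2}]$ yields $[u_\eta, u_{\gamma_2}][u_\eta, u_{\gamma_1}] = 1$ by shift-invariance. Finally, for (2-n3) with $j < k+1$, shift-invariance gives $M_{\gamma_1, \alpha_{k+1}}^G = M_{\gamma_2, \alpha_{k+1}}^G = \{\zeta_1, \zeta_2\}$ (or both empty); using (2-n1) to see that $u_{\zeta_1} u_{\zeta_2}$ is central in $U_H$ and (2-n2) to see that it squares to $1$, the repeated inner factors cancel and the left-hand side collapses to $u_{\gamma_1} u_{\gamma_2}$.
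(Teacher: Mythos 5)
Your proposal follows essentially the same route as the paper: reduce to Theorem \ref{Theorem: pre-com = com} via the pre-commutator axioms and Remark \ref{Remark: PCB and type}, then verify the three conditions by showing that for the consecutive roots $\gamma_1, \gamma_2$ of types $s_1, s_0$ appearing in any nonempty commutator set the $M$-sets against any third root agree; this is exactly the paper's Claim (phrased there as $u_{\gamma_1} u_{\gamma_2} \in Z(U_G)$, proved via $M_{\alpha_j, \alpha_i}^G = M_{\alpha_j, \alpha_{i+1}}^G$). One small organizational slip: your line ``if $M := M(s_0,s_1)_{\alpha_i,\alpha_{k+1}}^G$ is empty, the three conditions are trivial'' is not literally true for ($2$-n$3$), since $M_{\alpha_i,\alpha_j}^G$ with $j<k+1$ may be nonempty even when $M$ is empty — but your final paragraph already treats ($2$-n$3$) with $j<k+1$ by shift-invariance, so the argument is complete once that branch is understood to apply unconditionally.
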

\begin{proof}
	We abbreviate $M_{\alpha, \beta}^G := M\left( s_0, s_1 \right)_{\alpha, \beta}^G$ for all $(G, \alpha, \beta) \in \mathcal{I}$. Similarly as in Theorem \ref{Theorem: commutator blueprint}, $\mathcal{M}\left( s_0, s_1 \right)$ is a pre-commutator blueprint and we will apply Theorem \ref{Theorem: pre-com = com}. Let $G = (c_0, \ldots, c_{k+1}) \in \mathrm{Min}$ be a minimal gallery and let $(\alpha_1, \ldots, \alpha_{k+1})$ be the sequence of roots crossed by $G$. We first show the following:
	
	\emph{Claim:} If $\alpha_i$ has type $s_1$ and $\alpha_{i+1}$ has type $s_0$ for some $1 \leq i \leq k$, then $u_{\alpha_i} u_{\alpha_{i+1}} \in Z\left( U_G \right)$.
	
	Let $1 \leq j \leq k+1$. We have to show that $u_j$ commutes with $u_{\alpha_i} u_{\alpha_{i+1}}$. If $j \in \{i, i+1\}$, the claim follows. We next suppose $j<i$. But then it follows by construction, that $M_{\alpha_j, \alpha_i}^G = M_{\alpha_j, \alpha_{i+1}}^G$. This follows for $j>i+1$ similarly.
	
	Now the conditions in Theorem \ref{Theorem: pre-com = com}\ref{nilpotent conditions} follow essentially from the claim. Hence $\mathcal{M}\left( s_0, s_1 \right)$ is a commutator blueprint and the groups $U_w$ are nilpotent of class at most $2$. As $M_{\alpha, \beta}^G$ only depends on the existence of a suitable minimal gallery between $\alpha$ and $\beta$, it is also Weyl-invariant.
\end{proof}

In the rest of this subsection we will introduce a commutator blueprint, where the nilpotency class of the groups $U_w$ becomes arbitrarily large -- and even infinite. In order to do that we will mimic the commutation relations from the upper triangular matrices. We will first define $(i, j, k, l)$-galleries between two roots $\alpha$ and $\beta$ for $i, j, k, l \in \NN$ with $1 \leq i < j$, $1\leq k <l$ and $j<k$. Any $(i, j, k, l)$-gallery will have a certain type $(c_j, c_{j+1} \ldots, c_{k-1}, c_k)$ and the $c_p$ will indicate in which column we are. Roughly speaking, the first root crossed by such a gallery (which is $\alpha$) corresponds to the $(i, j)$-entry of a matrix and the last root crossed by such a gallery (which is $\beta$) corresponds to the $(k+1, l)$-entry or -- if $k=l-1$ -- to the $(1, k+1)$-entry. Every $(i, j, k, l)$-gallery will cross roots corresponding to the the following entries of a matrix:
\[ (i, j), (i+1, j), \ldots, (j-1, j), (1, j+1), \ldots, (k, l) \]
As we will mimic the commutation relations from the upper triangular matrices, the sets $M_{\mathrm{nil}}(n)_{\alpha, \beta}^G$ will correspond to the $(i, k)$-entry, if $j=k$.

\begin{definition}\label{Definition: M_nil n}
	Let $(W, S)$ be of rank at least $3$ and let $r, s, t \in S$ be pairwise distinct. 
	\begin{enumerate}[label=(\alph*)]
		\item For $i \in \NN$ we let $k_i := tsts\cdots$ with $\ell(k_i) = i$, e.g.\ $k_3 = tst$.
		
		\item Suppose $i_1, j_1, i_2, j_2 \in \NN$ with $1 \leq i_1 < j_1$, $1 \leq i_2 < j_2$ and $j_1 < j_2$. A gallery is called \emph{$(i_1, j_1, i_2, j_2)$-gallery} if it has type
		\[ \left( c_{j_1}, \ldots, c_{j_2} \right), \]
		where $c_i$ is defined as follows:
		\allowdisplaybreaks
		\begin{align*}
			r_{i,j} &= \left( r, k_i, r, k_j, r \right), \\
			c_{j_1} &= \left( s, r_{i_1,j_1}, s, r_{i_1+1, j_1}, \cdots, s, r_{j_1-1, j_1} \right) \\
			c_i &= \left( s, r_{1, i}, s, r_{2, i}, \cdots, s, r_{i-1, i} \right) \quad \text{for } j_1 < i < j_2 \\
			c_{j_2} &= \left( s, r_{1, j_2}, s, r_{2, j_2}, \cdots, s, r_{i_2-1, j_2}, s \right)
		\end{align*}
		
		\item Let $n \in \NN \cup \{ \infty \}$ and we define $z\leq \infty$ for all $z\in \NN$. Let $(G, \alpha, \beta) \in \mathcal{I}$ and assume that there exists a $(i_1, j_1, i_2, j_2)$-gallery between $\alpha$ and $\beta$. We let $(\alpha_{i_1, j_1}, \alpha_{i_1+1, j_1}, \ldots, \alpha_{j_1-1, j_1}, \alpha_{1, j_1+1}, \ldots, \alpha_{i_2, j_2})$ be the sequence of roots crossed by this gallery omitting the $r_{i, j}$ parts from above. Then $\alpha = \alpha_{i_1, j_1}$ and $\beta = \alpha_{i_2, j_2}$. We define
		\[ M_{\mathrm{nil}}(n)_{\alpha, \beta}^G := \begin{cases}
			\{ \alpha_{i_1, j_2} \} & \text{if } j_1 = i_2 \text{ and } j_2 \leq n \\
			\emptyset & \text{else}
		\end{cases} \]
		If there is no $(i_1, j_1, i_2, j_2)$-gallery between $\alpha$ and $\beta$ for all $i_1, j_1, i_2, j_2 \in \NN$, then we define $M_{\mathrm{nil}}(n)_{\alpha, \beta}^G := \emptyset$. Moreover, we define $\mathcal{M}_{\mathrm{nil}}(n) := \left( M_{\mathrm{nil}}(n)_{\alpha, \beta}^G \right)_{(G, \alpha, \beta) \in \mathcal{I}}$.
	\end{enumerate}
\end{definition}

\begin{example}
	A $(2, 3, 3, 5)$-gallery $G$ has type $(c_3, c_4, c_5)$, where
	\allowdisplaybreaks
	\begin{align*}
		c_3 &= (s, r_{2, 3}), && r_{2, 3} = (r, t, s, r, t, s, t, r) \\
		c_4 &= (s, r_{1, 4}, s, r_{2, 4}, s, r_{3, 4}), && r_{1, 4} = (r, t, r, t, s, t, s, r) \\
		&&& r_{2, 4} = (r, t, s, r, t, s, t, s, r) \\
		&&& r_{3, 4} = (r, t, s, t, r, t, s, t, s, r) \\
		c_5 &= (s, r_{1, 5}, s, r_{2, 5}, s), && r_{1, 5} = (r, t, r, t, s, t, s, t, r) \\
		&&& r_{2, 5} = (r, t, s, r, t, s, t, s, t, r)
	\end{align*}
	If $(\alpha_1, \ldots, \alpha_{61})$ is the sequence of roots crossed by $G$, then we have the following:
	\allowdisplaybreaks
	\begin{align*}
		&\alpha_{2, 3} = \alpha_1, && \alpha_{1, 4} = \alpha_{10}, && \alpha_{2, 4} = \alpha_{19}, && \alpha_{3, 4} = \alpha_{29}, && \alpha_{1, 5} = \alpha_{40} && \alpha_{2, 5} = \alpha_{50}, && \alpha_{3, 5} = \alpha_{61}.
	\end{align*}
	If $\alpha_1, \alpha_{61} \in \Phi_+$, then we have $M_{\mathrm{nil}}(n)_{\alpha_1, \alpha_{61}}^G = M_{\mathrm{nil}}(n)_{\alpha_{2, 3}, \alpha_{3, 5}}^G = \begin{cases}
		\{ \alpha_{2, 5} = \alpha_{50} \} & \text{if } 5\leq n \\
		\emptyset & \text{else}.
	\end{cases}$
\end{example}

\begin{remark}\label{Remark: M_nil(n)}
	\begin{enumerate}[label=(\alph*)]
		\item Let $\alpha, \beta, \gamma \in \Phi$ be three roots. Assume that $G = (d_0, \ldots, d_k)$ is an $(i_1, j_1, i_2, j_2)$-gallery between $\alpha$ and $\beta$ and that $(e_0, \ldots, e_l)$ is an $(i_2, j_2, i_3, j_3)$-gallery between $\beta$ and $\gamma$. Then $(d_0, \ldots, d_{k-1} = e_0, \ldots, e_l)$ is an $(i_1, j_1, i_3, j_3)$-gallery between $\alpha$ and $\gamma$.
		
		\item Let $\alpha, \beta \in \Phi$ be two roots with $M_{\mathrm{nil}}(n)_{\alpha, \beta}^G = \{ \gamma \}$. Then there exists an $(i, j, j, k)$-gallery between $\alpha$ and $\beta$ for some $i, j, k \in \NN$ with $i<j<k \leq n$. Note that by definition, we have $M_{\mathrm{nil}}(n)_{\alpha, \gamma}^G = \emptyset = M_{\mathrm{nil}}(n)_{\gamma, \beta}^G$.
	\end{enumerate}
\end{remark}

\begin{lemma}\label{Lemma: M_nil(n)}
	Let $n \in \NN \cup \{ \infty \}$. Let $(G, \alpha, \beta) \in \mathcal{I}$ and suppose that there does not exist an $(i, j, i', j')$-gallery between $\alpha$ and $\beta$ for all $i, j, i', j' \in \NN$. Let $\delta \in \Phi(G)$ with $\alpha \leq_G \delta \leq_G \beta$. Then the following hold:
	\begin{enumerate}[label=(\alph*)]
		\item If $\gamma \in M_{\mathrm{nil}}(n)_{\alpha, \delta}^G$, then $M_{\mathrm{nil}}(n)_{\gamma, \beta}^G = \emptyset$.
		
		\item If $\gamma \in M_{\mathrm{nil}}(n)_{\delta, \beta}^G$, then $M_{\mathrm{nil}}(n)_{\alpha, \gamma}^G = \emptyset$.
	\end{enumerate}
\end{lemma}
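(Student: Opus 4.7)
The strategy for both parts is to derive a contradiction with the hypothesis by manufacturing, via Remark~\ref{Remark: M_nil(n)}(a), an $(i,j,i',j')$-gallery between $\alpha$ and $\beta$. Since $(W,S)$ is universal, $\vert \mathrm{Min}(w) \vert = 1$, so the sub-gallery of $G$ between any two of its roots is unique and well-defined.

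For part (a), let $H_1$ be the $(i_1,j_1,j_1,j_2)$-gallery from $\alpha$ to $\delta$ produced by $\gamma \in M_{\mathrm{nil}}(n)_{\alpha,\delta}^G$; here $\gamma = \alpha_{i_1,j_2}$ is the $i_1$-th $s$-step in the final column $c_{j_2}$. Suppose, towards a contradiction, that the sub-gallery from $\gamma$ to $\beta$ in $G$ is an $(i_1',j_1',j_1',j_2')$-gallery. Its type begins $(s, r_{i_1',j_1'}, s, \ldots)$, which must match the continuation of $G$ just past $\gamma$; reading from $H_1$, this is $(s, r_{i_1,j_2}, s, \ldots)$, and comparing the $r$-blocks forces $i_1' = i_1$ and $j_1' = j_2$. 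The prefix of $H_1$ from $\alpha$ to $\gamma$ is an $(i_1,j_1,i_1,j_2)$-gallery whose end-parameters match the start-parameters of the $(i_1,j_2,j_2,j_2')$-gallery from $\gamma$ to $\beta$; Remark~\ref{Remark: M_nil(n)}(a) then concatenates them into an $(i_1,j_1,j_2,j_2')$-gallery between $\alpha$ and $\beta$, the desired contradiction.

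For part (b), let $H_2$ be the $(i,j,j,k)$-gallery from $\delta$ to $\beta$ with $\gamma = \alpha_{i,k}$, and suppose for contradiction that the sub-gallery $H_1$ from $\alpha$ to $\gamma$ is an $(i',j',j',k')$-gallery. The crux is to locate $\delta$ inside $H_1$. The sub-gallery from $\delta$ to $\gamma$ in $G$ appears both as a suffix of $H_1$ and as the prefix of $H_2$ of type $(c_j, c_{j+1}, \ldots, c_{k-1}, c'_k)$, where $c'_k = (s, r_{1,k}, s, \ldots, s, r_{i-1,k}, s)$. A case analysis on the column of $H_1$ containing $\delta$ excludes the boundary possibilities: placing $\delta$ in $H_1$'s first column forces $j' = j$ and $j' = i$, hence $i = j$; placing it in the last column collapses the prefix to a single partial column, forcing $j = k$; both contradict $i < j < k$. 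So $\delta$ lies in an intermediate column, and matching lengths and $r$-indices of the partial starting and final columns yields $\delta = \alpha_{i,j}$ together with $j' = i$ and $k' = k$. Thus the prefix of $H_1 = (i',i,i,k)$-gallery from $\alpha$ to $\delta$ is an $(i',i,i,j)$-gallery, whose end-parameters $(i,j)$ match the start-parameters of $H_2$. Remark~\ref{Remark: M_nil(n)}(a) concatenates them into an $(i',i,j,k)$-gallery between $\alpha$ and $\beta$, again the desired contradiction.

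The main obstacle is the case analysis in part (b): in part (a) the two galleries meet at $\gamma$ whose coordinates inside the first gallery are prescribed by the definition of $M_{\mathrm{nil}}(n)$, while in part (b) they meet at $\delta$ whose coordinates inside $H_1$ are a priori unknown. The rigidity of the column shapes $c_m$ (each determined by its index $m$ through its $r_{k,m}$-blocks) makes the identification essentially unique, but the bookkeeping of prefixes, suffixes, and partial columns is where the care is needed.
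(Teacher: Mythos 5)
Your proof is correct, and for part~(a) it is essentially the argument of the paper: compare the $r$-block of $G$ immediately after $\gamma$ (which is $r_{i_1,j_2}$ by the definition of $M_{\mathrm{nil}}(n)_{\alpha,\delta}^G$) with the first $r$-block of the hypothetical gallery from $\gamma$ to $\beta$ to force $i_1' = i_1$, $j_1' = j_2$, and then concatenate via Remark~\ref{Remark: M_nil(n)}(a).

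For part~(b) you take a genuinely different, somewhat more elaborate route. You locate $\delta$ inside $H_1$, split into three cases according to whether $\delta$ sits in the first, last, or an intermediate column of $H_1$, and match partial column shapes on both ends of the overlap of $H_1$ and $H_2$. This is sound — the rigidity of the $r_{p,m}$-blocks (with $|k_i|=i$) makes each identification forced, the boundary cases collapse to $i=j$ or $j=k$, and the intermediate case produces $j'=i$, $k'=k$, $\delta=\alpha_{i,j}$, after which Remark~\ref{Remark: M_nil(n)}(a) gives the offending $(i',i,j,k)$-gallery. The paper instead never locates $\delta$: it reads off only the $r$-block of $G$ \emph{immediately preceding} $\gamma$, which from $H_2$ is $r_{i-1,k}$ (if $i>1$) or $r_{k-2,k-1}$ (if $i=1$), and from $H_1$ is $r_{j'-1,k'}$; matching these yields $j'=i$, $k'=k$ directly (or an outright contradiction when $i=1$ since $j'=1$ is disallowed), with no case analysis on $\delta$'s position. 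What the paper's shortcut buys is brevity; what your route buys is that the concatenation step is fully justified rather than asserted, since you exhibit $\delta=\alpha_{i,j}$ explicitly and hence the $(i',i,i,j)$-gallery prefix that feeds into Remark~\ref{Remark: M_nil(n)}(a). Note that your intermediate case already absorbs the paper's $i=1$ subcase implicitly: there you derive $j'=i$, which when $i=1$ contradicts $i'<j'$ before one even reaches the concatenation.
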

\begin{proof}
	We abbreviate $M_{\sigma, \rho}^H := M_{\mathrm{nil}}(n)_{\sigma, \rho}^H$ for all $(H, \sigma, \rho) \in \mathcal{I}$. We first show $(a)$. We can assume that $M_{\alpha, \delta}^G = \{ \gamma \}$. By definition, there exists an $(i, j, j, k)$-gallery between $\alpha$ and $\delta$ for some $i, j, k \in \NN$ with $i < j < k \leq n$, and this gallery has type $(c_j, c_{j+1}, \ldots, c_{k-1}, c_k)$. Note that
	\[ c_k = (s, r_{1, k}, s, r_{2, k}, \ldots, s, r_{i-1, k}, \textbf{s}, r_{i, k}, \ldots, s, r_{j-1, k}, s) \]
	and $\gamma$ is the root of type $\textbf{s}$ crossed by $c_k$ directly before $r_{i, k}$.
	
	Assume that $M_{\gamma, \beta}^G \neq \emptyset$. Then there would exist an $(i', j', j', k')$-gallery between $\gamma$ and $\beta$ for some $i', j', k' \in \NN$ with $i' < j' < k' \leq n$. Because the type of $G$ after crossing $\gamma$ is $r_{i, k}$ we know that $i' = i$ and $j' = k$. But then we have an $(i, j, j', k')$-gallery between $\alpha$ and $\beta$ by Remark \ref{Remark: M_nil(n)}, which is a contradiction. This finishes the claim.
	
	We now show $(b)$. Again we can assume $M_{\delta, \beta}^G = \{ \gamma \}$. By definition, there exists an $(i, j, j, k)$-gallery between $\delta$ and $\beta$ for some $i, j, k \in \NN$ with $i<j<k \leq n$, and this gallery has type $(c_j, \ldots, c_k)$. Note that
	\allowdisplaybreaks
	\begin{align*}
		&c_{k-1} = (\ldots, s, r_{k-2, k-1}) &&\text{and} && c_k = (s, r_{1, k}, \ldots, s, r_{j-1, k}, s).
	\end{align*}
	Assume that $M_{\alpha, \gamma}^G \neq \emptyset$. Then there would exist a $(i', j', j', k')$-gallery between $\alpha$ and $\gamma$ for some $i', j', k' \in \NN$ with $i' < j' < k' \leq n$. We distinguish the following cases:	
	\begin{enumerate}[label=(\roman*)]
		\item $i=1$: Then $\gamma$ is the first root crossed by $c_k$ and the type of $G$ just before crossing $\gamma$ is $r_{k-2, k-1}$. This implies that $j' = 1$ and $k' = k$, which is a contradiction.
		
		\item $i>1$: In this case the type of $G$ just before crossing $\gamma$ is $r_{i-1, k}$. This implies $j' = i$ and $k' = k$. But then we have an $(i', j', j, k)$-gallery between $\alpha$ and $\beta$, which is a contradiction. \qedhere
	\end{enumerate}
\end{proof}

\begin{theorem}\label{Theorem: M_nil(n)}
	For each $n \in \NN \cup \{ \infty \}$, $\mathcal{M}_{\mathrm{nil}}(n)$ is a Weyl-invariant commutator blueprint. Moreover, the following hold:
	\begin{enumerate}[label=(\roman*)]
		\item\label{Nilpotency class M_nil(n) bounded} If $n \in \NN_{\geq 3}$, then every group $U_w$ is nilpotent of class at most $n-1$, but not all are nilpotent of class at most $n-2$.
		
		\item\label{Nilpotency class M_nil(n) unbounded} If $n = \infty$, then for every $m \in \NN$ there exists $w_m \in W$ such that $U_{w_m}$ is not nilpotent of class at most $m$.
	\end{enumerate}
\end{theorem}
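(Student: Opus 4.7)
My plan is as follows. First, I would verify that $\mathcal{M}_{\mathrm{nil}}(n)$ is a pre-commutator blueprint: axiom (CB$1$) is immediate from Definition \ref{Definition: M_nil n}, since $M_{\mathrm{nil}}(n)_{\alpha,\beta}^G$ depends only on the existence of an $(i_1, j_1, i_2, j_2)$-gallery between $\alpha$ and $\beta$, an intrinsic property of the subgallery from $\alpha$ to $\beta$; (CB$2$) is vacuous because $m_{st} = \infty$; and (PCB) holds automatically by Remark \ref{Remark: PCB and type}. To upgrade to a commutator blueprint I would apply Lemma \ref{Lemma: Conditions to extend U_w} rather than Theorem \ref{Theorem: pre-com = com}, since the latter would force nilpotency class $\leq 2$, in direct contradiction with (i) and (ii). The verification of (C$1$)--(C$3$) is driven by the following observation: whenever $\gamma = \alpha_{i_1, j_2}$ is the unique element of some $M_{\alpha, \beta}^G$, the step of $G$ immediately after $\gamma$ lies inside an $r_{i_1, j_2}$-block and is therefore of type $r$, while every $(i_1', j_1', i_2', j_2')$-gallery begins with a step of type $s$; hence no such gallery can start at $\gamma$. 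Combined with Lemma \ref{Lemma: M_nil(n)}, this forces the inner products $\prod_{\omega \in M_{\gamma, \alpha_{k+1}}^G} u_\omega$ occurring in (C$1$)--(C$3$) to be empty wherever needed, reducing each of the three conditions to a defining relation that already holds in $U_H$. Weyl-invariance follows because the existence of an $(i_1, j_1, i_2, j_2)$-gallery between two roots is manifestly preserved by the $W$-action.

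For the nilpotency analysis, I would introduce a spread invariant mirroring the unitriangular matrix group $U_n(\FF_2)$: if $\alpha$ occurs at position $(i, j)$ of some $(*, *, *, *)$-subgallery of $G$, call $j - i$ the \emph{spread} of that occurrence. Generators not appearing at any $(*, *, *, *)$-position commute with all other generators by direct inspection of $\mathcal{M}_{\mathrm{nil}}(n)$ and hence lie in $Z(U_w)$. The sole non-trivial commutation rule $[u_{\alpha_{i_1, j_1}}, u_{\alpha_{i_2, j_2}}] = u_{\alpha_{i_1, j_2}}$ fires precisely when $j_1 = i_2$, and in that case the output spread $j_2 - i_1$ equals the sum $(j_1 - i_1) + (j_2 - i_2)$ of the input spreads. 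By induction, every non-trivial iterated bracket of $k$ gallery generators yields an output of spread at least $k$. For finite $n$ the maximum attainable spread is $n - 1$, so every $n$-fold iterated bracket vanishes, giving $\gamma_n(U_w) = 1$ and the upper bound class $\leq n - 1$ in (i). For tightness, I would exhibit $w_n \in W$ whose unique reduced expression contains a $(1, 2, n - 1, n)$-gallery, and verify by induction on $k$ that the left-nested commutator $[\ldots[[u_{\alpha_{1, 2}}, u_{\alpha_{2, 3}}], u_{\alpha_{3, 4}}], \ldots, u_{\alpha_{k - 1, k}}]$ telescopes via the defining relations to $u_{\alpha_{1, k}}$; at $k = n$ this is a non-trivial involution in $U_{w_n}$, since $|U_{w_n}| = 2^{\ell(w_n)}$ by Lemma \ref{Lemma: Definition of UG}. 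For (ii), the same construction carried out inside $\mathcal{M}_{\mathrm{nil}}(\infty)$ yields, for each $m \in \NN$, a $w_m \in W$ whose reduced expression contains a $(1, 2, m + 1, m + 2)$-gallery, so that $U_{w_m}$ has nilpotency class at least $m + 1$.

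The main obstacle is the telescoping lower-bound computation. At each nested level one must invoke (CB$1$) to extract the relevant commutation relation from the full gallery $G$ and check that the resulting product of $u_\gamma$'s equals exactly the single predicted generator, rather than being accidentally rewritten via interactions with previously produced factors in the iterated bracket. Once this is established, the spread argument delivers the upper bound cleanly, and Weyl-invariance together with the pre-commutator status are routine from the definitions.
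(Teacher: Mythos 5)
Your overall architecture matches the paper's: both establish the pre-commutator blueprint axioms, both upgrade to a commutator blueprint via Lemma~\ref{Lemma: Conditions to extend U_w} (correctly avoiding Theorem~\ref{Theorem: pre-com = com}), both derive Weyl-invariance from the invariance of the gallery pattern, and both get the lower bound on nilpotency class from a $(1, 2, n-1, n)$-gallery with the telescoping iterated commutator. The main genuine divergence is in the upper bound on the nilpotency class: the paper proceeds by induction on $\ell(w)$, locating the minimal index $i'$ at which a $(*,*,*,*)$-gallery to $\alpha_p$ begins and decomposing $[U_G, U_G]$ accordingly, whereas you introduce a spread invariant $j-i$ and bound the lower central series directly. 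Your route is structurally cleaner, but to make it airtight you should note that $U_w$ is a finite $2$-group (so $\gamma_n = \gamma_{n+1}$ forces $\gamma_n = 1$) and invoke the standard fact that $\gamma_n(U_w)/\gamma_{n+1}(U_w)$ is generated by images of weight-$n$ iterated commutators of the generators; without that reduction, "all weight-$n$ iterated brackets of generators vanish" does not immediately give $\gamma_n = 1$.

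There is, however, a genuine gap in your verification of (C1)--(C3). Your key observation -- that the step of $G$ immediately after $\gamma = \alpha_{i_1, j_2}$ is of type $r$ (start of the $r_{i_1, j_2}$-block), while every $(i_1', j_1', i_2', j_2')$-gallery begins with a step of type $s$, hence no such gallery can start at $\gamma$ -- does not follow. A $(*,*,*,*)$-gallery whose \emph{first} crossed root is $\gamma$ has its first step of type $s$ (the one crossing $\gamma$) and its \emph{second} step of type $r$ (the first letter of $r_{i_1', j_1'}$), which is exactly consistent with the local type of $G$ at $\gamma$. Indeed, by Remark~\ref{Remark: M_nil(n)}(a), concatenations such as a $(1,2,2,3)$-gallery followed by a $(1,3,3,4)$-gallery produce a root $\gamma = \alpha_{1,3}$ lying in some $M_{\alpha,\beta}^G$ that is simultaneously the first root of a genuine $(1,3,3,4)$-gallery, so galleries certainly can start at $\gamma$. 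What actually makes (C1) and (C2) work is Remark~\ref{Remark: M_nil(n)}(b): the subgallery of $G$ from $\gamma = \alpha_{i_1, j_2}$ to $\alpha_{k+1} = \alpha_{i_2, j_2}$ stays entirely inside the single column block $c_{j_2}$, so it cannot have the form $(c_{j_1'}, \ldots, c_{j_2'})$ with $j_1' < j_2'$; equivalently, since $\alpha_{k+1}$ is the \emph{last} root of $G$ and $i_2 < j_2$, the block $c_{j_2}$ of a putative gallery starting at $\gamma$ would require $r_{i_2, j_2}, \ldots, r_{j_2-1, j_2}$, which lie beyond the end of $G$. For (C3) the case split of Lemma~\ref{Lemma: M_nil(n)} (whether or not some $(*,*,*,*)$-gallery runs from $\alpha_i$ all the way to $\alpha_{k+1}$) is needed and cannot be replaced by the observation you cite.
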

\begin{proof}
	We abbreviate $\mathcal{M} := \mathcal{M}_{\mathrm{nil}}(n)$ and $M_{\alpha, \beta}^G := M_{\mathrm{nil}}(n)_{\alpha, \beta}^G$ for all $(G, \alpha, \beta) \in \mathcal{I}$. By definition, $\mathcal{M}$ satisfies (CB$1$) and (CB$2$), and by Remark \ref{Remark: PCB and type} it also satisfies (PCB). Hence $\mathcal{M}$ is a pre-commutator blueprint. We will apply Lemma \ref{Lemma: Conditions to extend U_w}. Suppose $G = (d_1, \ldots, d_p) \in \mathrm{Min}$, let $\beta \in \Phi(G)$ be the last root which is crossed by $G$ and let $\alpha \in \Phi(G) \backslash \{ \beta \}$. If $M_{\alpha, \beta}^G = \emptyset$, then (C1) and (C2) are satisfied. Otherwise, there exists an $(i, j, j, k)$-gallery between $\alpha$ and $\beta$ for some $i, j, k \in \NN$ with $1 \leq i < j < k \leq n$ and we have $M_{\alpha, \beta}^G = \{ \gamma \}$. Then $M_{\alpha, \gamma}^G = \emptyset = M_{\gamma, \beta}^G$ holds by definition (cf.\ Remark \ref{Remark: M_nil(n)}). Thus (C1) and (C2) are satisfied and it remains to check (C3).
	
	Let $\alpha, \delta \in \Phi(G) \backslash \{ \beta \}$ with $\alpha \leq_G \beta$. If there is an $(i_1, j_1, i_2, j_2)$-gallery between $\alpha$ and $\beta$ for some $i_1, j_1, i_2, j_2 \in \NN$, then (C3) follows essentially from the fact that (C3) holds for upper triangular matrices. Thus we can assume that there does not exist any $(i_1, j_1, i_2, j_2)$-gallery between $\alpha$ and $\beta$ for all $i_1, j_1, i_2, j_2 \in \NN$. In particular, $M_{\alpha, \beta}^G = \emptyset$ and we have to check that the following is a relation in $U_{(d_0, \ldots, d_{p-1})}$:
	\[ \left[ u_{\alpha}, u_{\delta} \prod\nolimits_{\gamma \in M_{\delta, \beta}^G} u_{\gamma} \right] = \prod\nolimits_{\gamma \in M_{\alpha, \delta}^G} \left( u_{\gamma} \prod\nolimits_{\omega \in M_{\gamma, \beta}^G} u_{\omega} \right) \]
	
	Suppose $M_{\alpha, \delta}^G \neq \emptyset$ and let $\gamma \in M_{\alpha, \delta}^G$. Then Lemma \ref{Lemma: M_nil(n)} implies $M_{\gamma, \beta}^G = \emptyset$ and the right hand side of the previous equation is equal to $\prod\nolimits_{\gamma \in M_{\alpha, \delta}^G} u_{\gamma}$ in both cases $M_{\alpha, \delta}^G = \emptyset$ and $M_{\alpha, \delta}^G \neq \emptyset$.
	
	If $M_{\delta, \beta}^G = \emptyset$, then (C3) holds and we are done. Thus we can assume $M_{\delta, \beta}^G = \{ \gamma \}$. Using Remark \ref{Remark: M_nil(n)}, we deduce $M_{\delta, \gamma}^G = \emptyset$. Moreover, Lemma \ref{Lemma: M_nil(n)} yields $M_{\alpha, \gamma}^G = \emptyset$. This implies that the following is a relation in $U_{(d_0, \ldots, d_{p-1})}$ and hence (C3) holds:
	\[ \left[ u_{\alpha}, u_{\delta} \prod\nolimits_{\omega \in M_{\delta, \beta}^G} u_{\omega} \right] = [ u_{\alpha}, u_{\delta} u_{\gamma} ] = [ u_{\alpha}, u_{\gamma} ] [ u_{\alpha}, u_{\delta} ]^{u_{\gamma}} = [u_{\alpha}, u_{\delta}] =  \prod\nolimits_{\omega \in M_{\alpha, \delta}^G} u_{\omega} \]
	
	Now Lemma \ref{Lemma: Conditions to extend U_w} yields that $\mathcal{M}$ is a commutator blueprint. The Weyl-invariance follows from the fact that $M_{\alpha, \beta}^G$ does not depend on $G$, but only on the existence of a suitable minimal gallery between $\alpha$ and $\beta$.
	
	We will show \ref{Nilpotency class M_nil(n) bounded} and \ref{Nilpotency class M_nil(n) unbounded}. Let $n \in \NN_{\geq 3}$ and let $G \in \mathrm{Min}(w)$ be a $(1, 2, n-1, n)$-gallery. Then $U_w \cong U_G$ is not nilpotent of class at most $n-2$. This proves \ref{Nilpotency class M_nil(n) unbounded} and the second part of \ref{Nilpotency class M_nil(n) bounded}. Thus it is left to show that for every $w\in W$ the group $U_w$ is nilpotent of class at most $n-1$. 
	
	Let $w\in W$. We prove the claim by induction on $\ell(w)$. If $\ell(w) \leq 1$, the claim follows. Thus we can assume $\ell(w) >1$. Let $G = (c_0, \ldots, c_p) \in \mathrm{Min}(w)$, let $(\alpha_1, \ldots, \alpha_p)$ be the sequence of roots crossed by $G$ and let $H := (c_0, \ldots, c_{p-1})$. Using induction, the group $U_H$ is nilpotent of class at most $n-1$. If $M_{\alpha_i, \alpha_p}^G = \emptyset$ for all $1 \leq i \leq p-1$, then $U_w \cong U_H \times \ZZ_2$ and $U_w$ is again nilpotent of class at most $n-1$. Thus we can assume $M_{\alpha_i, \alpha_p}^G \neq \emptyset$ for some $1 \leq i \leq p-1$ and hence there exists a $(j, k, k, l)$-gallery between $\alpha_i$ and $\alpha_p$ for some $j, k, l \in \NN$ with $1 \leq j<k<l\leq n$. Then the type of $G$ ends with 
	\[ (\ldots, s, r_{1, l}, \ldots, s, r_{k-1, l}, s) \]
	Let $1 \leq i' \leq i$ be minimal with the property that there exists an $(i_1, j_1, i_2, j_2)$-gallery between $\alpha_{i'}$ and $\alpha_p$ for some $i_1, j_1, i_2, j_2 \in \NN$ with $1\leq i_1 < j_1, 1 \leq i_2 < j_2$ and $j_1 < j_2$. Note that $M_{\alpha_{i'}, \alpha_p}^G = \emptyset$ is possible. Comparing the type of $G$ and the type of the $(i_1, j_1, i_2, j_2)$-gallery, we deduce $i_2 = k$ and $j_2 = l \leq n$.
	
	Let $\alpha \in \{ \alpha_1, \ldots, \alpha_{i' -1} \}$ and $\beta \in \{ \alpha_{i' +1}, \ldots, \alpha_p \}$ and assume $M_{\alpha, \beta}^G \neq \emptyset$. Then there would exist a $(j', k', k', l')$-gallery between $\alpha$ and $\beta$ for some $j', k', l' \in \NN$ with $j' < k' < l' \leq n$. By definition, $\beta$ must be of type $s$ and the root crossed directly before $\beta$ by $G$ must be of type $r$. Note that $\beta$ is a root on the $(i_1, j_1, i_2, j_2)$-gallery between $\alpha_{i'}$ and $\alpha_p$. By considering the type of this gallery it follows that $\beta$ is a root of the form $\alpha_{x, y}$ (cf.\ Definition \ref{Definition: M_nil n}). But then we would also have an $(i_1', j_1', i_2', j_2')$-gallery between $\alpha$ and $\alpha_p$ by Remark \ref{Remark: M_nil(n)}, which is a contradiction to the minimality of $i'$. Thus we deduce $M_{\alpha, \beta}^G = \emptyset$. However, we note that it is possible that $M_{\alpha, \alpha_{i'}}^G \neq \emptyset \neq M_{\alpha_{i'}, \beta}^G$.
	
	If $i' = 1$, then the claim follows. Thus we can assume $1 < i'$. Using induction, the group $U_{(c_0, \ldots, c_{i'})}$ is nilpotent of class at most $n-1$ and the subgroup of $U_G$ generated by $\{ u_{\alpha_{i'}}, \ldots, u_{\alpha_p} \}$ is also nilpotent of class at most $n-1$. Moreover, we have $u_{\alpha_{i'}} \notin [U_G, U_G]$ because of the type of $G$ and the fact that $i'$ is minimal (cf.\ also Remark \ref{Remark: M_nil(n)}). This implies
	\[ [U_G, U_G] \leq \langle u_{\alpha_i} \mid 1 \leq i \leq i'-1 \rangle \times \langle u_{\alpha_i} \mid i' +1 \leq i \leq p \rangle \]
	It follows that the nilpotency class of $U_G$ is equal to the maximum of the nilpotency classes of the groups $U_{(c_0, \ldots, c_{i'})}$ and the subgroup of $U_w$ generated by $\{ u_{\alpha_{i'}}, \ldots, \alpha_p \}$, which is at most $n-1$. This finishes the claim.
\end{proof}

\subsection*{Examples of type $(4, 4, 4)$}

\begin{convention}
	For the rest of this paper we assume that $(W, S)$ is of rank $3$ and that $m_{st} = 4$ for all $s\neq t \in S$.
\end{convention}

\begin{proposition}\label{Proposition: 2-nilpotent}
	Let $\mathcal{M} = \left( M_{\alpha, \beta}^G \right)_{(G, \alpha, \beta) \in \mathcal{I}}$ be a pre-commutator blueprint of type $(4, 4, 4)$ such that for all $(G, \alpha, \beta) \in \mathcal{I}$ with $o(r_{\alpha} r_{\beta}) < \infty$ we have
	\[ M_{\alpha, \beta}^G = \begin{cases}
		(\alpha, \beta) & \vert (\alpha, \beta) \vert = 2 \\
		\emptyset & \vert (\alpha, \beta) \vert < 2
	\end{cases} \]
	Let $G = (c_0, \ldots, c_k) \in \mathrm{Min}$ and let $(\alpha_1, \ldots, \alpha_k)$ be the sequence of roots crossed by $G$. We let $\beta := \alpha_k$ and assume that the following hold for all $\alpha \in \Phi(G) \backslash \{ \beta \}$:
	\begin{enumerate}[label=(\alph*)]
		\item Suppose that $o(r_{\alpha} r_{\beta}) < \infty$, $M_{\alpha, \beta}^G = \{ \gamma, \delta \}$ and let $\epsilon \in \Phi(G)$. 
		\begin{enumerate}[label=(\roman*)]
			\item If $\epsilon \subsetneq \gamma$ and $\epsilon \subsetneq \delta$, then $M_{\epsilon, \gamma}^G = M_{\epsilon, \delta}^G$ holds.
			
			\item If $\gamma \subsetneq \epsilon$ and $\delta \subsetneq \epsilon$, then $M_{\gamma, \epsilon}^G = M_{\delta, \epsilon}^G$ holds.
		\end{enumerate}
		
		\item Suppose that $o(r_{\alpha} r_{\beta}) = \infty$. Then the following hold:
		\begin{enumerate}[label=(\roman*)]
			\item $\prod\nolimits_{\gamma \in M_{\alpha, \beta}^G} u_{\gamma} \in Z(U_{(c_0, \ldots, c_{k-1})})$.
			
			\item $\left( \prod\nolimits_{\gamma \in M_{\alpha, \beta}^G} u_{\gamma} \right)^2 = 1$ holds in $U_{(c_0, \ldots, c_{k-1})}$.
		\end{enumerate}
	\end{enumerate}
	If ($2$-n$3$) of Theorem \ref{Theorem: pre-com = com}\ref{nilpotent conditions} holds, then $\mathcal{M}$ is a commutator blueprint and the groups $U_w$ are nilpotent of class at most $2$.
\end{proposition}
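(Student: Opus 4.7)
The strategy is to apply Theorem~\ref{Theorem: pre-com = com}: since ($2$-n$3$) is supplied as hypothesis, it suffices to verify ($2$-n$1$) and ($2$-n$2$) for every $(G, \alpha, \beta) \in \mathcal{I}$. I proceed by induction on $\ell(w)$, the length of the endpoint of $G$; at the inductive step, Theorem~\ref{Theorem: pre-com = com} applied to $\mathcal{M}$ restricted to shorter galleries provides that $U_H = U_{(c_0, \ldots, c_{k-1})}$ is nilpotent of class at most $2$, and hence by Lemma~\ref{commutator2} every commutator in $U_H$ is central and squares to $1$.

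Fix $(G, \alpha, \beta)$ with $\beta = \alpha_k$ the last root of $G$. If $o(r_\alpha r_\beta) = \infty$, hypothesis~(b) is literally ($2$-n$1$) and ($2$-n$2$). Otherwise the blanket hypothesis on $\mathcal{M}$ forces $M_{\alpha, \beta}^G$ to be either empty (nothing to check) or to equal a pair $\{ \gamma, \delta \} = (\alpha, \beta)$ with $\vert(\alpha, \beta)\vert = 2$. In the latter case $\alpha, \beta, \gamma, \delta$ are the four walls of the shared rank-$2$ residue $R$ of $\alpha$ and $\beta$ (of type $4$), and inspection of the octagon gives $(\gamma, \delta) = \emptyset$. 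The blanket hypothesis then yields $M_{\gamma, \delta}^{G'} = \emptyset$ for any minimal gallery $G'$ crossing both roots, so by (PCB) one has $[u_\gamma, u_\delta] = 1$ in $U_H$; hence $(u_\gamma u_\delta)^2 = u_\gamma^2 u_\delta^2 = 1$, establishing ($2$-n$2$).

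For ($2$-n$1$) one must show $[u_\epsilon, u_\gamma u_\delta] = 1$ in $U_H$ for every $\epsilon \in \Phi(H)$. Because $U_H$ has class at most $2$, we have $[u_\epsilon, u_\gamma u_\delta] = [u_\epsilon, u_\gamma] \cdot [u_\epsilon, u_\delta]$, and each factor is an involution by Lemma~\ref{commutator2}; it therefore suffices to prove $[u_\epsilon, u_\gamma] = [u_\epsilon, u_\delta]$. If $\epsilon \subsetneq \gamma$ and $\epsilon \subsetneq \delta$, then $\epsilon \leq_G \gamma$ and $\epsilon \leq_G \delta$ (proper inclusion of positive roots forces the smaller one to be crossed first by any minimal gallery), so hypothesis~(a)(i) provides $M_{\epsilon, \gamma}^G = M_{\epsilon, \delta}^G$ and the two commutators coincide as products of the same $u_\omega$'s in the common $\leq_G$-order; the symmetric case $\gamma, \delta \subsetneq \epsilon$ is handled by~(a)(ii). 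The remaining configurations are treated geometrically: by Lemma~\ref{Lemma: nested infinite order} the assumption $o(r_\epsilon r_\gamma) = \infty$ forces $\{ \epsilon, \gamma \}$ nested, so the ``mixed'' possibilities $\epsilon \subsetneq \gamma$ and $\delta \subsetneq \epsilon$ (or its mirror) would give $\delta \subsetneq \gamma$, contradicting the non-nestedness of $\gamma$ and $\delta$; the cases $\epsilon \in \{ \gamma, \delta \}$ are trivial; and when $o(r_\epsilon r_\gamma) < \infty$ or $o(r_\epsilon r_\delta) < \infty$, the blanket hypothesis on $\mathcal{M}$ combined with a short analysis of the rank-$2$ octagon shared by $\epsilon$ with $\gamma$ (resp.\ $\delta$) shows that $M_{\epsilon, \gamma}^G$ and $M_{\epsilon, \delta}^G$ are either both empty or both equal. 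For instance, when $\epsilon = \alpha$ is the fourth wall of $R$, one has $\vert(\alpha, \gamma)\vert, \vert(\alpha, \delta)\vert \leq 1$, so both $M$-sets are empty by the blanket hypothesis.

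I expect the main obstacle to be this last rank-$2$ case analysis: for roots $\epsilon$ that are neither nested with both $\gamma$ and $\delta$ nor equal to one of the four walls of $R$, one must enumerate the ways $\epsilon$ can share a rank-$2$ residue with $\gamma$ or $\delta$, and in each such configuration verify $M_{\epsilon, \gamma}^G = M_{\epsilon, \delta}^G$. The essential geometric input to be leveraged throughout is that $\gamma$ and $\delta$ are the two adjacent middle walls of a common rank-$2$ residue of type $4$, hence interchangeable via the longest element of that residue; this symmetry must be propagated to the commutator data to yield the required equality.
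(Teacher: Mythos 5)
Your overall strategy is correct and matches the paper's: induct on length, use Theorem~\ref{Theorem: pre-com = com} (so only ($2$-n$1$) and ($2$-n$2$) need to be verified), and in the finite-order case reduce, via the class-$\leq 2$ property of $U_H$ and Lemma~\ref{commutator2}, to comparing the two commutators $[u_\epsilon, u_\gamma]$ and $[u_\epsilon, u_\delta]$. The nested cases (handled by (a)(i), (a)(ii) after observing that proper inclusion forces the $\leq_G$-order) and the trivial cases $\epsilon \in \{\alpha, \gamma, \delta\}$ are treated correctly, and the reduction $[u_\epsilon, u_\gamma u_\delta] = [u_\epsilon, u_\gamma][u_\epsilon, u_\delta]$ is a clean formulation.

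However, the proof has a genuine gap exactly where you flag it: the case in which $\alpha_i = \epsilon$ is \emph{not} nested with $\gamma, \delta$ (equivalently, $o(r_{\alpha_i} r_\gamma) < \infty$ or $o(r_{\alpha_i} r_\delta) < \infty$) and $\alpha_i$ is not one of the four walls of $R$. You leave this as ``a short analysis of the rank-$2$ octagon'' plus ``enumerate the ways $\epsilon$ can share a rank-$2$ residue,'' but there are infinitely many such $\epsilon$ and no direct enumeration is available. Your proposed fix — propagating the symmetry of the longest element $r_J$ of $R$ interchanging $\gamma$ and $\delta$ — does not obviously work: that involution neither fixes $\alpha_i$ nor fixes $1_W$, so it does not act on the data $M^G_{\cdot,\cdot}$ in any useful way. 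The paper handles this case by a combinatorial triangle argument: since $\partial^2 \alpha_i \cap \partial^2 \alpha \cap \partial^2 \beta = \emptyset$, one of $o(r_\alpha r_{\alpha_i}), o(r_\beta r_{\alpha_i})$ is finite, giving a reflection triangle $\{r_{\alpha_i}, r_\epsilon, r_\rho\}$ (with $\epsilon \in \{\alpha, \beta\}$, $\rho \in \{\gamma, \delta\}$). Remark~\ref{Remark: reflection triangle plus orientation yields triangle}, Lemma~\ref{reflectiontrianglechamber}, Lemma~\ref{Lemma: Triangle and infinite order} and Lemma~\ref{Lemma: residue cut by intersection reflections} are then used to determine the orientation of the triangle and to conclude $(\alpha_i, \gamma) = (\alpha_i, \delta) = \emptyset$ in both possible orientations, whence both $M$-sets vanish by the blanket hypothesis and both commutators are trivial. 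This triangle analysis is the core content of the proposition's proof and cannot be replaced by a residue-local inspection.
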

\begin{proof}
	Note that by assumption and Theorem \ref{Theorem: pre-com = com} it suffices to show that $\mathcal{M}$ satisfies ($2$-n$1$) and ($2$-n$2$). We abbreviate $u_i := u_{\alpha_i}$ for all $1 \leq i \leq k$. 
	
	Let $1 \leq i \leq k-1$. We first see that ($2$-n$2$) holds in the case $o(r_{\alpha_i} r_{\beta}) < \infty$ by assumption and in the case $o(r_{\alpha_i} r_{\beta}) = \infty$ by condition $(b)(ii)$. Thus it suffices to show that ($2$-n$1$) holds. 
	
	Let $1 \leq i \leq k-1$. We have to show $[u_i, [u_{\alpha}, u_{\beta}]] = 1$. If $o(r_{\alpha} r_{\beta}) = \infty$, then $[u_{\alpha}, u_{\beta}]$ commutes with $u_i$ by condition $(b)(i)$ and the claim follows. Thus we assume $o(r_{\alpha} r_{\beta}) < \infty$. Moreover, we can assume that $M_{\alpha, \beta}^G \neq \emptyset$ and hence $\vert (\alpha, \beta) \vert = 2$. We suppose $M_{\alpha, \beta}^G = \{\gamma, \delta\}$ with $\gamma \leq_G \delta$.
		
	If $R \in \partial^2 \alpha_i \cap \partial^2 \alpha \cap \partial^2 \beta$, then the claim follows from the assumptions. Thus we can assume that $\partial^2 \alpha_i \cap \partial^2 \alpha \cap \partial^2 \beta = \emptyset$. Suppose that $o(r_{\alpha_i} r_{\gamma}) = \infty = o(r_{\alpha_i} r_{\delta})$. As $\{ \alpha_i, \gamma \}, \{\alpha_i, \delta\} \in \mathcal{P}$, we deduce from Lemma \ref{Lemma: nested infinite order}$(a)$ that both are nested. As $o(r_{\gamma} r_{\delta}) < \infty$, we deduce $\alpha_i \subseteq \gamma, \delta$ or $\gamma, \delta \subseteq \alpha_i$. If $\alpha_i \subseteq \gamma, \delta$, we have $M_{\alpha_i, \gamma}^G = M_{\alpha_i, \delta}^G$ by condition $(a)(i)$ and we infer
		\allowdisplaybreaks
		\begin{align*}
			u_i^{-1} \left( \prod\nolimits_{\epsilon \in M_{\alpha, \beta}^G} u_{\epsilon} \right) u_i &= \prod\nolimits_{\epsilon \in M_{\alpha, \beta}^G} \left( \prod\nolimits_{\omega \in M_{\alpha_i, \epsilon}^G} u_{\omega} \right) u_{\epsilon} \\
			&= \left( \prod\nolimits_{\omega \in M_{\alpha_i, \gamma}^G} u_{\omega} \right) u_{\gamma} \left( \prod\nolimits_{\omega \in M_{\alpha_i, \delta}^G} u_{\omega} \right) u_{\delta} \\
			&\overset{(b)(i)}{=} u_{\gamma} \left( \prod\nolimits_{\omega \in M_{\alpha_i, \gamma}^G} u_{\omega} \right)^2 u_{\delta} \\
			&\overset{(b)(ii)}{=} u_{\gamma} u_{\delta} = \prod\nolimits_{\epsilon \in M_{\alpha, \beta}^G} u_{\epsilon}
		\end{align*}
		The case $\gamma, \delta \subseteq \alpha_i$ follows similarly. Now we suppose that there exists $\rho \in (\alpha, \beta)$ with $o(r_{\alpha_i} r_{\rho}) < \infty$. Assume that $o(r_{\alpha} r_{\alpha_i}) = \infty = o(r_{\beta} r_{\alpha_i})$. Then we would have $\alpha_i \subseteq \beta$ (as $\beta = \alpha_k$) and, moreover, either $\alpha \subseteq \alpha_i$ or $\alpha_i \subseteq \alpha$. As $o(r_{\alpha} r_{\beta}) < \infty$, we deduce $\alpha_i \subseteq \alpha$, but then we would have $\alpha_i \subseteq \alpha \cap \beta \subseteq \rho$, which is a contradiction. We conclude $o(r_{\alpha} r_{\alpha_i}) < \infty$ or $o(r_{\beta} r_{\alpha_i}) < \infty$. Let $\epsilon \in \{ \alpha, \beta \}$ be a root such that $o(r_{\epsilon} r_{\alpha_i}) < \infty$. Then Lemma \ref{Lemma: nested infinite order}$(b)$ yields
		\begin{align*}
			&o(r_{\epsilon} r_{\rho}) < \infty &&\text{and} &&\partial^2 \alpha \cap \partial^2 \beta \cap \partial^2 \rho \neq \emptyset.
		\end{align*}
		Let $\epsilon' \in \{ \alpha, \beta \} \backslash \{ \epsilon \}$. We deduce from Lemma \ref{Lemma: residue cut by intersection reflections} that $\partial^2 \rho \cap \partial^2 \epsilon = \partial^2 \alpha \cap \partial^2 \beta = \partial^2 \rho \cap \partial^2 \epsilon'$. As $\partial^2 \alpha_i \cap \partial^2 \alpha \cap \partial^2 \beta = \emptyset$, we infer that $\{ r_{\alpha_i}, r_{\epsilon}, r_{\rho} \}$ is a reflection triangle. Using Remark \ref{Remark: reflection triangle plus orientation yields triangle} there exist $\beta_i \in \{ \alpha_i, -\alpha_i \}, \beta_{\epsilon} \in \{ \epsilon, -\epsilon \}$ and $\beta_{\rho} \in \{ \rho, -\rho \}$ such that $\{ \beta_i, \beta_{\epsilon}, \beta_{\rho} \}$ is a triangle. Note that $(-\beta_{\epsilon}, \beta_{\rho}) = \emptyset$ by Lemma \ref{reflectiontrianglechamber}. Let $\sigma \in (\alpha, \beta) \backslash \{ \rho \}$. Using Lemma \ref{Lemma: residue cut by intersection reflections}, there exists $\beta_{\epsilon'} \in \{ \epsilon', -\epsilon' \}$ and $\beta_{\sigma} \in \{ \sigma, -\sigma \}$ with $\beta_{\epsilon'}, \beta_{\sigma} \in (\beta_{\epsilon}, \beta_{\rho})$. By Lemma \ref{Lemma: Triangle and infinite order} we have $o(r_{\alpha_i} r_{\epsilon'}) = \infty$. Note that $\{ \alpha_i, \epsilon' \} \in \P$ and hence $\{ \alpha_i, \epsilon' \}$ is nested by Lemma \ref{Lemma: nested infinite order}. Recall that $\partial^2 \epsilon \cap \partial^2 \epsilon' \cap \partial^2 \rho = \partial^2 \alpha \cap \partial^2 \beta \cap \partial^2 \rho \neq \emptyset$. We distinguish the following two cases:
		\begin{enumerate}[label=(\alph*)]
			\item $\alpha_i \subseteq \epsilon'$: For $R \in \partial^2 \epsilon \cap \partial^2 \rho = \partial^2 \epsilon' \cap \partial^2 \rho = \partial^2 \alpha \cap \partial^2 \beta$ (cf.\ Lemma \ref{Lemma: residue cut by intersection reflections}), we deduce $\emptyset \neq R\cap (-\epsilon') \subseteq (-\alpha_i)$ and, as $R \notin \partial^2 \alpha_i$, we have $R \subseteq (-\alpha_i)$. This yields $\beta_i = -\alpha_i$. For $R \in \partial^2 \alpha_i \cap \partial^2 \epsilon$ we have $\emptyset \neq \alpha_i \cap R \subseteq \epsilon'$. As $\partial^2 \alpha_i \cap \partial^2 \epsilon \cap \partial^2 \epsilon' = \partial^2 \alpha_i \cap \partial^2 \alpha \cap \partial^2 \beta = \emptyset$, we deduce $R \notin \partial^2 \epsilon'$ and hence $R \subseteq \epsilon'$. In particular, we have $\emptyset \neq \epsilon \cap R \subseteq \epsilon \cap \epsilon' = \alpha \cap \beta \subseteq \rho$. As $R \notin \partial^2 \rho$ ($\{ r_{\alpha_i}, r_{\epsilon}, r_{\rho} \}$ is a reflection triangle), we infer $R \subseteq \rho$ and hence $\beta_{\rho} = \rho$. Lemma \ref{reflectiontrianglechamber} implies $(\alpha_i, \rho) = (-\beta_i, \beta_{\rho}) = \emptyset$. Recall that $\beta_{\sigma} \in (\beta_{\epsilon}, \beta_{\rho})$. Using Lemma \ref{Lemma: Triangle and infinite order}, we deduce $o(r_{\alpha_i} r_{\sigma}) = \infty, \alpha_i = -\beta_i \subseteq \beta_{\sigma}$ and $(\alpha_i, \beta_{\sigma}) = \emptyset$. As $\alpha_i \ni 1_W \notin (-\sigma)$, we deduce $\beta_{\sigma} = \sigma$. Since $\{ \gamma, \delta \} = (\alpha, \beta) = \{ \rho, \sigma \}$, we have $(\alpha_i, \gamma) = (\alpha_i, \delta) = \emptyset$.
			
			\item $\epsilon' \subseteq \alpha_i$: For $R \in \partial^2 \epsilon \cap \partial^2 \rho = \partial^2 \epsilon' \cap \partial^2 \rho = \partial^2 \alpha \cap \partial^2 \beta$, we deduce $\emptyset \neq R\cap \epsilon' \subseteq \alpha_i$ and, as $R \notin \partial^2 \alpha_i$, we have $R \subseteq \alpha_i$. This yields $\beta_i = \alpha_i$. For $R \in \partial^2 \alpha_i \cap \partial^2 \epsilon$ we have $\emptyset \neq R \cap (-\alpha_i) \subseteq (-\epsilon')$. As $\partial^2 \alpha_i \cap \partial^2 \epsilon \cap \partial^2 \epsilon' = \partial^2 \alpha_i \cap \partial^2 \alpha \cap \partial^2 \beta = \emptyset$, we deduce $R \notin \partial^2 \epsilon'$ and hence $R \subseteq (-\epsilon')$. In particular, we have $\emptyset \neq (-\epsilon) \cap R \subseteq (-\epsilon) \cap (-\epsilon') = (-\alpha) \cap (-\beta) \subseteq (-\rho)$. As $R \notin \partial^2 \rho$ ($\{ r_{\alpha_i}, r_{\epsilon}, r_{\rho} \}$ is a reflection triangle), we infer $R \subseteq (-\rho)$ and hence $\beta_{\rho} = -\rho$. Lemma \ref{reflectiontrianglechamber} implies $(\alpha_i, \rho) = \emptyset$. Recall that $\beta_{\sigma} \in (\beta_{\epsilon}, \beta_{\rho})$. Using Lemma \ref{Lemma: Triangle and infinite order}, we deduce $o(r_{\alpha_i} r_{\sigma}) = \infty, -\alpha_i = -\beta_i \subseteq \beta_{\sigma}$ and $(-\alpha_i, \beta_{\sigma}) = \emptyset$. As $\{\alpha_i, \sigma\} \in \mathcal{P}$, Lemma \ref{Lemma: nested infinite order} implies that $\{ \alpha_i, \sigma \}$ is nested and hence $\{ -\alpha_i, -\sigma \}$ is nested as well. As $-\alpha_i \subseteq \beta_{\sigma}$, we deduce $\beta_{\sigma} = -\sigma$ and hence $(\alpha_i, \sigma) = -(-\alpha_i, -\sigma) = \emptyset$. Since $\{ \gamma, \delta \} = (\alpha, \beta) = \{ \rho, \sigma \}$, we have $(\alpha_i, \gamma) = (\alpha_i, \delta) = \emptyset$.
		\end{enumerate}
		
		In both cases we compute the following, where $N_{\alpha_i, \epsilon}^G \in \{ M_{\alpha_i, \epsilon}^G, M_{\epsilon, \alpha_i}^G \}$ depends on which expression is defined:
		\[ \prod\nolimits_{\epsilon \in M_{\alpha, \beta}^G} \left( \prod\nolimits_{\omega \in N_{\alpha_i, \epsilon}^G} u_{\omega} \right) u_{\epsilon} = \prod\nolimits_{\epsilon \in M_{\alpha, \beta}^G} u_{\epsilon} \qedhere \]
\end{proof}

\begin{definition}
	Let $H = (c_0, \ldots, c_k)$ be a gallery in $\Sigma(W, S)$. Then $H$ is called \textit{of type $(n, r) \in \NN_{\geq 1} \times S$}, if $S = \{r, s, t\}$ and the gallery $H$ is of type $(u, r, r_{\{s, t\}}, \ldots, r, r_{\{s, t\}}, r, v)$ for some $u, v \in \{ 1_W, s, t \}$, where $r_{\{s, t\}}$ appears $n$ times in the type of $H$. We note that $(1_W, c_0^{-1} c_1, \ldots, c_0^{-1} c_k)$ is a minimal gallery by Lemma \ref{wordsincoxetergroup} and \cite[Lemma $2.15$]{AB08} and so is $H$.
\end{definition}

\begin{lemma}\label{Lemma: extending gallery to gallery in Min}
	Let $\alpha, \beta \in \Phi_+$ be two roots and let $(n, r) \in \NN_{\geq 2} \times S$. Suppose that there exists a minimal gallery $H = (c_0, \ldots, c_k)$ of type $(n, r)$ between $\alpha$ and $\beta$. Then the following hold:
	\begin{enumerate}[label=(\alph*)]
		\item We can extend $(c_6, \ldots, c_k)$ to a minimal gallery contained in $\mathrm{Min}$.
		
		\item Let $R \in \partial^2 \alpha$ be a residue such that $\alpha$ is a non-simple root of $R$. If $\{ c_0, c_1 \} \not\subseteq R$, then there exists a simple root of $R$, say $\gamma \in \Phi_+$, such that $-\gamma \subseteq \beta$.
	\end{enumerate}
\end{lemma}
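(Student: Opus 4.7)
The plan for part (a) is to show that $\ell(c_{i+1}) = \ell(c_i) + 1$ for every $6 \le i \le k-1$; this additivity implies that any minimal gallery from $1_W$ to $c_6$ can be concatenated with $(c_6, \ldots, c_k)$ to give the required element of $\mathrm{Min}$. To this end, I will track the $\{s,t\}$-residues $R_1, R_2, \ldots$ fully traversed by $H$ and the projections $z_j := \proj_{R_j} 1_W$. Since $\ell(z_j s) = \ell(z_j) + 1 = \ell(z_j t)$, both parts of Lemma~\ref{wordsincoxetergroup} apply with base chamber $w = z_j$; the second part in particular yields $\ell(z_j w' r f) = \ell(z_j) + \ell(w') + 2$ for $w' \in \langle s,t \rangle$ with $\ell(w') \ge 2$, $r \in S \setminus \{s,t\}$, and $f \in S \setminus \{r\}$. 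Chaining this length additivity across a full $\{s,t\}$-residue traversal followed by an $r$-transition and the first letter of the next traversal—and iterating across all successive residues—gives $\ell(c_i) = \ell(c_6) + (i-6)$ for all $6 \le i \le k$.

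The specific role of $c_6$ is to ensure, uniformly over the two sub-cases $u = 1_W$ and $u \in \{s,t\}$, that the iteration is already in the stable regime in which $z_j$ coincides with the entry chamber of $R_j$ in $H$ (for $j \ge 2$). The main technical obstacle I foresee in part (a) is the initial step of the induction: verifying, using $n \ge 2$ together with the positivity of $\alpha$ and $\beta$, that the word connecting $z_1$ to the subsequent relevant chamber has length at least $2$, so that Lemma~\ref{wordsincoxetergroup} is indeed applicable at the very first step. A case distinction on the position of $z_1$ inside $R_1$ is likely needed here.

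For part (b), let $z := \proj_R 1_W$ and let $\gamma_p, \gamma_q \in \Phi_+$ be the two simple roots of $R$ corresponding to the two panels of $R$ through $z$; since $\alpha$ is non-simple for $R$, we have $\alpha \notin \{\gamma_p, \gamma_q\}$. My plan is to use part (a) to extend $(c_6, \ldots, c_k)$ to a minimal gallery in $\mathrm{Min}$ and then to locate a four-step sub-gallery of this extended gallery—beginning with the transition across $\partial \alpha$ (whose panel $\{c_0, c_1\}$ lies outside $R$ by hypothesis) and ending with a transition on $\partial \gamma_p$ or $\partial \gamma_q$—to which Lemma~\ref{mingallinrep} applies, producing $\alpha \subsetneq \gamma$ for one of $\gamma \in \{\gamma_p, \gamma_q\}$. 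To convert $\alpha \subsetneq \gamma$ into $-\gamma \subseteq \beta$, I will use Lemma~\ref{Lemma: nested infinite order} (giving that $\{\alpha, \gamma\}$ is nested with $o(r_\alpha r_\gamma) = \infty$) together with a careful analysis of how the extended gallery crosses $\partial \gamma$ and $\partial \beta$: because $\beta$ is the last root crossed and the type $(n, r)$ forces specific positions for the walls encountered, the pair $\{\gamma, \beta\}$ should turn out to be nested in the orientation $-\gamma \subseteq \beta$. The main obstacle here is pinpointing the correct four-step sub-gallery and verifying the orientation of the second nested pair.
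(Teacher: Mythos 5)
For part (a), your plan — chaining length-additivity across the $\{s,t\}$-residues via Lemma~\ref{wordsincoxetergroup}, with a careful initial-step analysis — is essentially the paper's argument, which handles the first few steps by an explicit case distinction on $u \in \{1_W, s, t\}$ and on whether the first $r$-step and subsequent $s$-/$t$-step increase length, using Lemma~\ref{Lemma: residues in pairs} to recover after a ``down'' step. You correctly flagged the initial step as the obstacle; note that $n \geq 2$ plays no role there (positivity of $\alpha$ gives $\ell(c_0 r) = \ell(c_0)+1$ when $u=1_W$, and the rest is the case analysis).

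For part (b), there is a genuine gap. You propose to locate a four-step subgallery of the extended gallery from (a) ``beginning with the transition across $\partial\alpha$ at $\{c_0,c_1\}$.'' But the gallery produced in (a) only contains $c_6, \ldots, c_k$; the stretch $c_0, \ldots, c_5$ is replaced by some other minimal route from $1_W$ to $c_6$, so $\{c_0,c_1\}$ is generically not on it. More fundamentally, the hypothesis puts no bound on the distance along the wall $\partial\alpha$ between $P_\alpha \subseteq R$ and $\{c_0,c_1\}$, so no single local application of Lemma~\ref{mingallinrep} can bridge that distance. The paper instead takes the chain $P_0 = P_\alpha, \ldots, P_n = \{c_0,c_1\}$ of parallel panels and the intermediate rank-$2$ residues $R_1, \ldots, R_n$ from Lemma~\ref{CM06Prop2.7}, shows $\alpha_K \subseteq \beta$ for a short gallery $K$ issuing from $\proj_{R_n} 1_W$ by Lemma~\ref{mingallinrep}, and then proves \emph{by induction on $n$} that $-\gamma$ is contained in a non-simple root of each $R_i$, hence ultimately in $\alpha_K \subseteq \beta$. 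This induction along the wall is the idea your proposal is missing. Finally, your intermediate target $\alpha \subsetneq \gamma$ does not lead to $-\gamma \subseteq \beta$ by any direct nesting argument; the invariant that the paper actually propagates is $-\gamma \subseteq (\text{non-simple root of } R_i)$, not any relation between $\alpha$ and $\gamma$.
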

\begin{proof}
	In the proof we use the following notation: For a minimal gallery $G = (c_0, \ldots, c_k)$, $k \geq 1,$ we denote the unique root containing $c_{k-1}$ but not $c_k$ by $\alpha_G$.
	
	Let $S = \{r, s, t\}$ and recall that $H$ is of type $(u, r, r_{\{s, t\}}, \ldots, r, r_{\{s, t\}}, r, v)$, where $u, v \in \{1_W, s, t\}$ and $r_{\{s, t\}}$ appears $n$ times. We first show $(a)$. Suppose $u=1_W$. Then $\ell(c_0r) = \ell(c_0) +1$. If $\ell(c_0rs) = \ell(c_0r) +1 = \ell(c_0rt)$, we can extend $H$ to a gallery contained in $\mathrm{Min}$ by Lemma \ref{wordsincoxetergroup} and induction. If $\ell(c_0rs) = \ell(c_0)$, it follows from Lemma \ref{Lemma: residues in pairs} that $\ell(c_0rst) = \ell(c_0) +1$. Hence we can extend $(c_5, \ldots, c_k)$ to a gallery contained in $\mathrm{Min}$. The same holds if $\ell(c_0rt) = \ell(c_0)$. Now we suppose $u = s$ (the case $u=t$ is analogous). Again we note that $\ell(c_0s) = \ell(c_0)+1$. If $\ell(c_0sr) = \ell(c_0)$, Lemma \ref{Lemma: residues in pairs} yields $\ell(c_0srt) = \ell(c_0) +1$. Thus we can extend $(c_6, \ldots, c_k)$ to a gallery contained in $\mathrm{Min}$. Suppose that $\ell(c_0sr) = \ell(c_0)+2$. Note that Lemma \ref{wordsincoxetergroup} implies that $\ell(c_0srt) = \ell(c_0) +3$. If $\ell(c_0 srs) > \ell(c_0 sr)$, then we can extend $H$ to a gallery contained in $\mathrm{Min}$. Otherwise, Lemma \ref{Lemma: residues in pairs} again implies $\ell(c_0srst) = \ell(c_0) +2$ and we can extend $(c_6, \ldots, c_k)$ to a gallery contained in $\mathrm{Min}$. In any case we can extend $(c_6, \ldots, c_k)$ to a gallery $\Gamma \in \mathrm{Min}$. This proves $(a)$.
	
	To prove assertion $(b)$, we suppose $\{ c_0, c_1 \} \not\subseteq R$. As $P_{\alpha} \subseteq R$ by Lemma \ref{Lemma:Palpha}, we have $P_{\alpha} \neq \{c_0, c_1\}$. Let $P_0 = P_{\alpha}, \ldots, P_n = \{c_0, c_1\}$ and $R_1, \ldots, R_n$ be as in Lemma \ref{CM06Prop2.7}. For every $1 \leq i \leq n$ we define $w_i := \proj_{R_i} 1_W$, we let $\{x, y\}$ be the type of $R_n$, we let $\{x\}$ be the type of $\{c_0, c_1\}$ and we let $S = \{x, y, z\}$. We distinguish the following two cases:
	\begin{enumerate}[label=(\roman*)]
		\item\label{Case i} $\proj_{R_n} 1_W = \proj_{P_{n-1}} 1_W$: Depending on $H$, we have $\alpha_K \subseteq \beta$ by Lemma \ref{mingallinrep} for $K = (w_n, \ldots, n)$, where the type of $K$ is contained in $\{ (x, y, z, x), (x, y, x, z, y), (x, y, x, y, z) \}$.
		
		\item $\proj_{R_n} 1_W \neq \proj_{P_{n-1}} 1_W$: Depending on $H$, we have $\alpha_K \subseteq \beta$ by Lemma \ref{mingallinrep} for $K = (w_n, \ldots, w)$, where the type of $K$ is contained in $\{ (x, y, x, y, z), (x, y, x, z), (y, x, y, z, x) \}$.
	\end{enumerate}
	We note that for $L = (w_n, w_n x, w_n xy)$ we have $\alpha_L \subseteq \alpha_K$ by Lemma \ref{mingallinrep}, where $K$ is as in $(i)$ or $(ii)$. We distinguish the following cases:	
	\begin{enumerate}[label=(\alph*)]
		\item $R = R_1$: Then we have $n\geq 2$ (as $P_n \not\subseteq R$) and $\proj_{R_n} 1_W = \proj_{P_{n-1}} 1_W$ by Lemma \ref{corbasisroot}. Let $\gamma\in \Phi_+$ be the simple root of $R$ which does not contain $P_{\alpha}$. We first suppose $n=2$. Using Lemma \ref{mingallinrep} we deduce that $-\gamma$ is contained in all three roots $\alpha_K$ mentioned in \ref{Case i}. Note also that $-\gamma$ is contained in one non-simple root of $R_2$. Now we assume $n\geq 3$. Using induction, $-\gamma$ is contained in a non-simple root of $R_{n-1}$. As such a root is contained in both non-simple roots of $R_n$ by Lemma \ref{mingallinrep}, it follows $(-\gamma) \subseteq \alpha_L$, where $L = (w_n, w_nx, w_n xy)$ is as before.
		
		\item $R \neq R_1$: Let $\gamma \in \Phi_+$ be the simple root of $R$ containing $P_{\alpha}$. We prove by induction on $n$ that $(-\gamma) \subseteq \alpha_L$ holds, where $L$ is as before. We first suppose $n=1$. If $\proj_{R_1} 1_W \neq \proj_{P_0} 1_W$, then the claim follows from by Lemma \ref{mingallinrep}. Thus we can assume that $\proj_{R_1} 1_W = \proj_{P_0} 1_W$. As before, the claim follows from Lemma \ref{mingallinrep}. Now we suppose $n>1$. Using induction, $-\gamma$ is contained in a non-simple root of $R_{n-1}$. As such a root is contained both non-simple roots of $R_n$ by Lemma \ref{mingallinrep}, the claim follows. \qedhere
	\end{enumerate}
\end{proof}

\begin{lemma}\label{Lemma: unique (n,r)}
	Let $\alpha, \beta \in \Phi$ be two roots. Let $G = (c_0, \ldots, c_k)$ be a gallery of type $(n, r)$, let $H = (d_0, \ldots, d_l)$ be a gallery of type $(n', r')$ and suppose that $G$ and $H$ are minimal galleries between the same roots $\alpha$ and $\beta$. Then we have $n = n'$ and $r=r'$.
\end{lemma}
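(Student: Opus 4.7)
The plan is first to recover the types in $S$ of the roots $\alpha$ and $\beta$ from the extreme letters of the gallery types of $G$ and $H$, and then to compare lengths and letter-counts. Under the running hypothesis $m_{pq}=4$ for all $p\neq q\in S$, a reflection $r_{\gamma}$ is conjugate to a unique simple reflection and stabilises a panel exactly when that panel has the corresponding type in $S$. Hence every wall $\partial\gamma$ admits panels of a single type, which I call the \emph{type} $t_{\gamma}$ of $\gamma$. Consequently, for any minimal gallery between $\alpha$ and $\beta$, the first (respectively last) letter of its type equals $t_{\alpha}$ (respectively $t_{\beta}$). Inspecting the definition of a gallery of type $(n,r)$ then forces $u=1_{W}$ when $t_{\alpha}=r$ and $u=t_{\alpha}$ otherwise, and analogously for $v$; the same constraints apply to $(u',v')$ in terms of $t_{\alpha},t_{\beta}$ and $r'$.

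Next, I would apply the length formula $\ell(G)=5n+1+\ell(u)+\ell(v)$ together with the above case distinction: $\ell(u)+\ell(v)$ equals $0$, $1$ or $2$ according as $r$ matches both, one, or neither of $t_{\alpha},t_{\beta}$, and similarly for $H$. Provided one can establish $\ell(G)=\ell(H)$, the identity $5(n-n')=(\ell(u')+\ell(v'))-(\ell(u)+\ell(v))\in\{-2,-1,0,1,2\}$ immediately yields $n=n'$ and $\ell(u)+\ell(v)=\ell(u')+\ell(v')$. In all configurations of $(t_{\alpha},t_{\beta})$ except the symmetric one where $t_{\alpha}=t_{\beta}=:p$ and $r,r'\in S\setminus\{p\}$ are distinct, this matching of sums alone forces $r=r'$, finishing the argument.

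The principal obstacle is twofold: establishing $\ell(G)=\ell(H)$ in the first place (since \emph{a priori} the two galleries can have distinct endpoints and hence distinct distances), and handling the exceptional symmetric configuration. For the first difficulty, I would use that any $(n,r)$-gallery between $\alpha$ and $\beta$ has its starting chamber in $\alpha\cap\beta$ and its ending chamber in $(-\alpha)\cap(-\beta)$ (minimal galleries cross each wall at most once), with both endpoints adjacent to $\partial\alpha,\partial\beta$ via panels of prescribed type. Extending $G$ and $H$ to minimal galleries from $1_W$ via Lemma~\ref{Lemma: extending gallery to gallery in Min}\,(a) and using the uniqueness of $P_{\alpha}$ from Lemma~\ref{Lemma:Palpha} should then pin down the length of each gallery in terms of $\alpha,\beta,n,r$ alone, yielding $\ell(G)=\ell(H)$. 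For the exceptional case $t_{\alpha}=t_{\beta}=p$ with $\{r,r'\}\subseteq S\setminus\{p\}$ distinct, the two candidate galleries would traverse rank-$2$ residues of incompatible types, namely $\{p,r'\}$-residues in one case and $\{p,r\}$-residues in the other; the rigidity of the $(4,4,4)$-tiling between two parallel type-$p$ walls --- as embodied in Lemma~\ref{Lemma: residues in the 2-boundary are parallel}\,(b) and Lemma~\ref{Lemma: nested infinite order} --- singles out a unique axis direction, forcing $r=r'$ in this case as well, and concluding the proof.
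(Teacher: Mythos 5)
Your plan diverges from the paper's proof in a substantive way, and the two acknowledged obstacles are genuine gaps that the sketch does not resolve.

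Your preliminary observations are correct: since all $m_{pq}=4$ are even, each reflection of $(W,S)$ is conjugate to a unique generator, so every root $\gamma$ has a well-defined panel type $t_\gamma$, and the first/last letters of the type of any minimal gallery between $\alpha$ and $\beta$ must be $t_\alpha$ and $t_\beta$. The length formula $\ell(G)=5n+1+\ell(u)+\ell(v)$ is also correct. However, the argument from there does not close. For the equality $\ell(G)=\ell(H)$, the appeal to Lemma~\ref{Lemma: extending gallery to gallery in Min}$(a)$ together with uniqueness of $P_\alpha$ does not ``pin down the length'': the extension $\Gamma\in\mathrm{Min}$ of $(c_6,\ldots,c_k)$ need not cross $\partial\alpha$ at $P_\alpha$, and the two galleries may start at distinct chambers of $\alpha\cap\beta$, so nothing in your sketch forces their endpoints --- and hence their lengths --- to agree. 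For the exceptional symmetric case ($t_\alpha=t_\beta=p$, $r,r'\in S\setminus\{p\}$ distinct), the invocation of ``rigidity of the $(4,4,4)$-tiling'' through Lemma~\ref{Lemma: residues in the 2-boundary are parallel}$(b)$ and Lemma~\ref{Lemma: nested infinite order} is heuristic; neither lemma directly supplies the claimed unique axis.

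By contrast, the paper goes straight for the terminal chambers. It extends both $(c_6,\ldots,c_k)$ and $(d_6,\ldots,d_l)$ to galleries in $\mathrm{Min}$ and then analyses how many residues $R\in\partial^2\beta$ have $\beta$ non-simple: if $v=1_W$ there are exactly two, forcing $v'=1_W$ and $\{c_{k-1},c_k\}=P_\beta=\{d_{l-1},d_l\}$; if $v\in\{s,t\}$, Lemmas~\ref{wordsincoxetergroup} and~\ref{Lemma: not both down} give a unique such residue $R$, and the panels $\{c_{k-1},c_k\},\{d_{l-1},d_l\}\subseteq R$ are both the panel in $\partial\beta\cap R$ distinct from $P_\beta$, hence equal; in both cases $c_{k-1}=d_{l-1}$, $c_k=d_l$, and a descent count at $c_{k-1}$ yields $r=r'$. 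Then $n=n'$ follows not from an arithmetic identity but from a wall-crossing argument: if $n<n'$, the reversed gallery $G^{-1}$ would be an initial subgallery of $H^{-1}$ (same types, same starting chamber $c_k=d_l$), so $H$ would cross $\partial\alpha$ twice, contradicting minimality. Note that the chamber-matching step already gives $r=r'$ outright, so even if you succeeded in proving $\ell(G)=\ell(H)$ you would most likely have done so by reproducing that step --- at which point the detour through the length formula and the exceptional-case analysis becomes superfluous. If you want to salvage your approach, the place to start is precisely the paper's count of residues in $\partial^2\beta$ with $\beta$ non-simple; that is what controls the terminal data for both galleries.
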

\begin{proof}
	Let $S = \{r, s, t\} = \{r', s', t'\}$. Recall that $G$ is of type $(u, r, r_{\{s, t\}}, \ldots, r, v)$ with $u, v \in \{1_W, s, t\}$ and $H$ is of type $(u', r', r_{\{s', t'\}}, \ldots, r', v')$ with $u', v' \in \{1_W, s', t'\}$. We first show that the last two chambers of the galleries $G$ and $H$ coincide, and that $r=r'$ and $v = v'$ hold. Using Lemma \ref{Lemma: extending gallery to gallery in Min}$(a)$ we can extend $(c_6, \ldots, c_k)$ and $(d_6, \ldots, d_l)$ to minimal galleries contained in $\mathrm{Min}$. We distinguish the following cases:
	\begin{enumerate}[label=(\alph*)]
		\item $v=1_W$: Then there ar exactly two different residues $R_1, R_2 \in \partial^2 \beta$ such that $\beta$ is not a simple root of $R_1$ and of $R_2$, and we have $\{ c_{k-1}, c_k\} = R_1 \cap R_2$. This implies $v' = 1_W$, as otherwise there would only be one such residue. But then $\{ c_{k-1}, c_k \} = P_{\beta} = \{ d_{l-1}, d_l \}$ and hence $r = r'$.
		
		\item $v = s$ (the case $v=t$ is analogous): Then, by Lemma \ref{wordsincoxetergroup} and \ref{Lemma: not both down}, there is only one residue $R \in \partial^2 \beta$ such that $\beta$ is not a simple root of $R$. Note that $c_{k-1}, c_k$ are contained in $R$ and we have $P_{\beta} \neq \{ c_{k-1}, c_k \}$. By the above we have $v' \neq 1_W$ and we deduce $P_{\beta} \neq \{ d_{l-1}, d_l \} \subseteq R$ similarly. This implies $v' = s$ and, in particular, $d_{l-1} = c_{k-1}$ and $d_l = c_k$. Note that there is only one element in $S$ which decreases the length of $d_{l-1} = c_{k-1}$. As $r, r'$ decrease both the length, we conclude $r=r'$.
	\end{enumerate}
	Assume $n \neq n'$. Without loss of generality we can assume $n < n'$. We consider the minimal galleries $H^{-1} = (d_l, d_{l-1}, \ldots, d_1, d_0)$ and $G^{-1} = (c_k, c_{k-1}, \ldots, c_1, c_0)$. Then $G^{-1}$ is a subgallery of $H^{-1}$ because of the types. But then $H$ crosses the wall $\partial \alpha$ at least twice. This is a contradiction to the fact that $H$ is minimal. We conclude $n=n'$ and the claim follows.
\end{proof}

In the next definition we will define subsets $M(n, r, L)_{\alpha, \beta}^G \subseteq (\alpha, \beta)$ for all $(G, \alpha, \beta) \in \mathcal{I}$, where $n \in \NN$ with $n \geq 3$, $r\in S$ and $L \subseteq \{2, \ldots, n-1 \}$. To have an intuition in mind, we will describe these symbols here: $n$ and $r$ will mean that there exists a minimal gallery of type $(n, r)$ between $\alpha$ and $\beta$; the subset $L$ roughly indicates, which elements of $(\alpha, \beta)$ are contained in the set $M(n, r, L)_{\alpha, \beta}^G$.

\begin{definition}\label{Definition: M(n,J)}
	\begin{enumerate}[label=(\alph*)]
		\item Let $r\in S$, let $n \in \NN$ with $n\geq 3$, let $L \subseteq \{ 2, \ldots, n-1 \}$ and let $(G, \alpha, \beta) \in \mathcal{I}$. If $o(r_{\alpha} r_{\beta}) < \infty$, then we define
		\[ M(n, r, L)_{\alpha, \beta}^G := \begin{cases*}
			(\alpha, \beta) & if $\vert (\alpha, \beta) \vert = 2$; \\
			\emptyset & else.
		\end{cases*} \]
		Now we consider the case $o(r_{\alpha} r_{\beta}) = \infty$. If there exists no minimal gallery of type $(n, r)$ between $\alpha$ and $\beta$, we define $M(n, r, L)_{\alpha, \beta}^G := \emptyset$. Now suppose that there exists a minimal gallery $H = (c_0, \ldots, c_k)$ of type $(n, r)$ between $\alpha$ and $\beta$. Let $(\alpha_1 = \alpha, \ldots, \alpha_k = \beta)$ be the sequence of roots crossed by $H$. For each $1 \leq i \leq n$ we define $\omega_i := \alpha_{k_i +2}$ and $\omega_i' := \alpha_{k_i +3}$, where $k_i = \ell\left( ur(r_{\{s, t\}}r)^{i-1} \right)$. We define 
		\[ M(n, r, L)_{\alpha, \beta}^G := \{ \omega_i, \omega_i' \mid i \in L \} \]
		
		\item Let $K \subseteq \NN_{\geq 3}$ be non-empty, let $\mathcal{J} = (J_k)_{k\in K}$ be a family of non-empty subsets $J_k \subseteq S$ and let $\mathcal{L} = \left(L_k^j\right)_{k \in K, j \in J_k}$ be a family of subsets $L_k^j \subseteq \{ 2, \ldots, k-1 \}$. For $(G, \alpha, \beta) \in \mathcal{I}$ we define
		\[ M(K, \mathcal{J}, \mathcal{L})_{\alpha, \beta}^G := \bigcup\nolimits_{k\in K, j \in J_k} M\left( k, j, L_k^j \right)_{\alpha, \beta}^G. \]
		Moreover, we let $\mathcal{M}(K, \mathcal{J}, \mathcal{L}) := \left( M(K, \mathcal{J}, \mathcal{L})_{\alpha, \beta}^G \right)_{(G, \alpha, \beta) \in \mathcal{I}}$.
	\end{enumerate}
\end{definition}

\begin{convention}
	For the rest of this section we let $K \subseteq \NN_{\geq 3}$ be non-empty, $\mathcal{J} = (J_k)_{k\in K}$ be a family of non-empty subsets $J_k \subseteq S$ and $\mathcal{L} = \left( L_k^j \right)_{k \in K, j \in J_k}$ be a family of subsets $L_k^j \subseteq \{ 2, \ldots, k-1 \}$. 
\end{convention}

\begin{remark}
	\begin{enumerate}[label=(\alph*)]
		\item Let $H$ be a gallery of type $(n, r)$ for some $n \in K$ and $r\in J_n$. We note that for $i>1$ the roots $\omega_i, \omega_i'$ are the non-simple roots of $R_{\{s, t\}}(c_{k_i})$. Suppose that $H$ is between $\alpha$ and $\beta$. Using Lemma \ref{mingallinrep}, we deduce $\alpha \subsetneq \omega_1, \omega_1' \subsetneq \cdots \subsetneq \omega_n, \omega_n' \subsetneq \beta$ and hence $M(n, r, L)_{\alpha, \beta}^G \subseteq (\alpha, \beta)$. We should remark that if $\alpha, \beta \in \Phi_+$, then not all of the roots crossed by $H$ are necessarily positive roots. But the roots $\omega_i, \omega_i'$ are. Consider for example the case $c_0 = trt$ and $H$ is of type $(r, r_{\{s, t\}}, r, \ldots, r_{\{s, t\}}, r)$. 
		
		\item We note that Lemma \ref{Lemma: unique (n,r)} implies that if $M(K, \mathcal{J}, \mathcal{L})_{\alpha, \beta}^G \neq \emptyset$, then there exist $k\in K$ and $j \in J_k$ with $M(K, \mathcal{J}, \mathcal{L})_{\alpha, \beta}^G = M\left( k, j, L_k^j \right)_{\alpha, \beta}^G$.
	\end{enumerate}
\end{remark}

\begin{lemma}
	 $\mathcal{M}(K, \mathcal{J}, \mathcal{L})$ is a pre-commutator blueprint of type $(4, 4, 4)$.
\end{lemma}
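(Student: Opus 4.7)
I will check each of (CB$1$), (CB$2$), and (PCB) for $\mathcal{M} = \mathcal{M}(K, \mathcal{J}, \mathcal{L})$. First I observe that by Lemma \ref{Lemma: unique (n,r)} the type $(n, r)$ of a minimal gallery between two given roots $\alpha, \beta$ with $o(r_\alpha r_\beta) = \infty$ is uniquely determined by the pair $(\alpha, \beta)$, so $M(k, j, L_k^j)_{\alpha, \beta}^G$ depends only on $\alpha$ and $\beta$, not on the ambient gallery $G$. Axiom (CB$1$) is then immediate. For (CB$2$), in a rank $2$ residue of type $\{s, t\}$ with $m_{st} = 4$ one has $|(\alpha_s, \alpha_t)| = 2$, while every other pair $\{\alpha, \beta\}$ of positive roots of the residue satisfies $|(\alpha, \beta)| < 2$; the $o(r_\alpha r_\beta) < \infty$ branch of Definition \ref{Definition: M(n,J)} then produces exactly the required sets.

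For (PCB) I fix $w \in W$ and $G \in \mathrm{Min}(w)$. The canonical map $U_G \to U_w$ is obviously surjective; for injectivity I need to show that every extra relation of $U_w$ coming from another gallery $G' \in \mathrm{Min}(w)$ is already a consequence of the defining relations of $U_G$. By the preceding paragraph the sets $M_{\alpha, \beta}^{G'}$ and $M_{\alpha, \beta}^G$ coincide, so the only possible discrepancy is that the defining product $\prod_{\gamma \in M_{\alpha, \beta}^G} u_\gamma$ is taken in different orders according to $\leq_{G'}$ versus $\leq_G$. I will argue that only a very restricted kind of reordering can occur.

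When $o(r_\alpha r_\beta) < \infty$, one has $M_{\alpha, \beta}^G = (\alpha, \beta) = \{\gamma, \delta\}$ with $|M_{\alpha, \beta}^G| = 2$; a direct computation in the dihedral group of order $8$ shows $(\gamma, \delta) = \emptyset$, so the defining relation $[u_\gamma, u_\delta] = 1$ is present in $U_G$ and the two orderings give the same element. When $o(r_\alpha r_\beta) = \infty$, $M_{\alpha, \beta}^G = \{\omega_i, \omega_i' \mid i \in L_k^j\}$, where $\omega_i, \omega_i'$ are the two non-simple positive roots of the rank $2$ residue $R_{\{s, t\}}(c_{k_i})$. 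Any minimal gallery between $\alpha$ and $\beta$ is, by Lemma \ref{Lemma: unique (n,r)}, forced to have type $(n, r)$, and hence must traverse the residues $R_{\{s, t\}}(c_{k_1}), \ldots, R_{\{s, t\}}(c_{k_n})$ in this order; the only available latitude is the choice of reduced expression $(s, t, s, t)$ versus $(t, s, t, s)$ within each block, which can swap $\omega_i$ with $\omega_i'$ inside a single pair but never moves a root across pairs. Thus the orderings from $G$ and $G'$ on $M_{\alpha, \beta}^G$ differ only by transpositions within pairs, and it suffices to verify $[u_{\omega_i}, u_{\omega_i'}] = 1$ in $U_G$. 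This reduces to the spherical case: $\omega_i, \omega_i'$ are non-simple roots of a common rank $2$ residue, whence $(\omega_i, \omega_i') = \emptyset$, $M_{\omega_i, \omega_i'}^G = \emptyset$, and the required commutation relation is one of the defining relations of $U_G$.

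The main obstacle will be making the rigidity statement rigorous, namely that no minimal gallery of $w$ can permute the rank $2$ residues $R_{\{s, t\}}(c_{k_i})$ traversed between $\alpha$ and $\beta$. I expect this to follow from Lemma \ref{Lemma: unique (n,r)} applied to each pair $(\omega_i, \omega_j)$ separately, together with the observation that the prefix $u \in \{1_W, s, t\}$, the terminal $v \in \{1_W, s, t\}$, and the intermediate $r$-steps of the $(n, r)$-type are rigidly determined by $\alpha$ and $\beta$. Once this rigidity is in hand, the relations from $G$ and from every other $G' \in \mathrm{Min}(w)$ agree in $U_G$, the canonical map $U_G \to U_w$ is an isomorphism, and $\mathcal{M}(K, \mathcal{J}, \mathcal{L})$ is confirmed to be a pre-commutator blueprint of type $(4,4,4)$.
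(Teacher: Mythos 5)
Your structure---check (CB1), (CB2), then obtain (PCB) by constructing a homomorphism $U_w \to U_G$ and handling the spherical and infinite-order cases separately---matches the paper's proof. The spherical case is fine: you observe $(\gamma,\delta) = \emptyset$, hence $[u_\gamma, u_\delta] = 1$ is already a defining relation of $U_G$, and the reordered product agrees. (The paper instead uses only the identities $[u_\beta,u_\alpha] = [u_\alpha,u_\beta]^{-1}$ and $(u_\delta u_\gamma)^{-1} = u_\gamma u_\delta$ that hold because all generators are involutions, which gives the same conclusion without invoking the extra commutation relation; the two routes are interchangeable.)

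The gap is in the infinite-order case, and you correctly identify it as the main obstacle. The claim ``any minimal gallery between $\alpha$ and $\beta$ is, by Lemma~\ref{Lemma: unique (n,r)}, forced to have type $(n,r)$'' is false: that lemma only asserts that \emph{if} two galleries between $\alpha$ and $\beta$ both have type $(n,r)$ and $(n',r')$, then $n=n'$ and $r=r'$; it says nothing about galleries that are not of any $(n,r)$-type, and generic minimal galleries between $\alpha$ and $\beta$ are not of that form (the initial panel of $\partial\alpha$ and terminal panel of $\partial\beta$ can be varied). Moreover, what is actually needed concerns the order $\leq_{G'}$ on $\Phi(G') = \Phi(w)$ coming from a gallery $G'$ from $1_W$ to $w$, not from a gallery between $\alpha$ and $\beta$, so ``applying the lemma to each pair $(\omega_i,\omega_j)$'' does not apply either. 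The fix is short and is what the paper does: the roots satisfy $\omega_i, \omega_i' \subsetneq \omega_{i+1}, \omega_{i+1}'$ (see the remark preceding the lemma and Lemma~\ref{mingallinrep}). Since $1_W \in \omega_i \cap \omega_{i+1}$ and $w \notin \omega_i \cup \omega_{i+1}$, once a minimal gallery from $1_W$ to $w$ leaves $\omega_{i+1}$ it has already left $\omega_i$, so $\omega_i \leq_{G'} \omega_{i+1}$ (and likewise for the other cross pairs) for every $G' \in \mathrm{Min}(w)$. Thus the only ambiguity is the order within each pair $\{\omega_i, \omega_i'\}$, and your observation $M_{\omega_i, \omega_i'}^G = \emptyset$ finishes the argument exactly as in the paper.
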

\begin{proof}
	We abbreviate $M_{\alpha, \beta}^G := M(K, \mathcal{J}, \mathcal{L})_{\alpha, \beta}^G$ for all $(G, \alpha, \beta) \in \mathcal{I}$. We first note that the sets $M_{\alpha, \beta}^G$ do not depend on $G$: in the case $o(r_{\alpha} r_{\beta}) < \infty$, the set $M_{\alpha, \beta}^G$ only depends on $\vert (\alpha, \beta) \vert$; in the case $o(r_{\alpha} r_{\beta}) = \infty$, the set $M_{\alpha, \beta}^G$ only depends on the existence of a suitable minimal gallery between $\alpha$ and $\beta$. On the other hand the order $\leq_G$ on the set $M_{\alpha, \beta}^G$ depends on $G$. Note that $\omega_i, \omega_i' \subsetneq \omega_{i+1}, \omega_{i+1}'$ and hence $\omega_i, \omega_i' \leq_G \omega_{i+1}, \omega_{i+1}'$, but the order on $\{ \omega_i, \omega_i' \}$ depends on $G$.
	
	Clearly, (CB$1$) and (CB$2$) hold. To show that (PCB) holds, we let $w\in W$ and $G \in \mathrm{Min}(w)$. Then we have a homomorphism $U_G \to U_w$ and it suffices to show that we have a homomorphism $U_w \to U_G$ extending $u_{\alpha} \mapsto u_{\alpha}$. Let $F \in \mathrm{Min}(w)$ and let $(F, \alpha, \beta) \in \mathcal{I}$. We first assume $o(r_{\alpha} r_{\beta}) < \infty$. If $\alpha \leq_G \beta$, then $M_{\alpha, \beta}^F = M_{\alpha, \beta}^G$ by definition and we are done. Thus we can assume $\beta \leq_G \alpha$. If $\vert (\alpha, \beta) \vert <2$, then $M_{\alpha, \beta}^F = \emptyset = M_{\beta, \alpha}^G$ and we are done. Thus we assume $(\alpha, \beta) = \{ \gamma, \delta \}$ and $\gamma \leq_F \delta$. Then $\delta \leq_G \gamma$ and we have the following relation in $U_G$:
	\[ [u_{\alpha}, u_{\beta}] = [u_{\beta}, u_{\alpha}]^{-1} = \left( u_{\delta} u_{\gamma} \right)^{-1} = u_{\gamma} u_{\delta} \]
	Thus we can consider the case $o(r_{\alpha} r_{\beta}) = \infty$. Then we have $\alpha \leq_G \beta$. If there is no gallery $H$ of type $(n, r)$ between $\alpha$ and $\beta$ for all $n\in K$ and $r\in J_n$, then $M_{\alpha, \beta}^F = \emptyset = M_{\alpha, \beta}^G$. Suppose that there exists a gallery $H$ of type $(n, r)$ between $\alpha$ and $\beta$ for some $n\in K$ and $r\in J_n$. Then $M_{\alpha, \beta}^F = \{ \omega_i, \omega_i' \mid i \in L_n^r \} = M_{\alpha, \beta}^G$ as sets. Note that $\omega_i, \omega_i' \leq_G \omega_{i+1}, \omega_{i+1}' \geq_F \omega_i, \omega_i'$. As $M_{\omega_i, \omega_i'}^G = \emptyset$, we deduce that $[u_{\alpha}, u_{\beta}] = \prod\nolimits_{\gamma \in M_{\alpha, \beta}^G} u_{\gamma} = \prod\nolimits_{\gamma \in M_{\alpha, \beta}^F} u_{\gamma}$ is a relation in $U_G$. Thus we obtain a homomorphism $U_w \to U_G$ and $\mathcal{M}(K, \mathcal{J}, \mathcal{L})$ is a pre-commutator blueprint.
\end{proof}

\begin{lemma}\label{Lemma: M(K,J,L) infinite order coincide}
	Let $(G, \alpha, \beta) \in \mathcal{I}$ with $o(r_{\alpha} r_{\beta}) < \infty$ and suppose that $M(K, \mathcal{J}, \mathcal{L})_{\alpha, \beta}^G = \{ \gamma, \delta \}$. Then the following hold for all $\epsilon \in \Phi(G)$:
	\begin{enumerate}[label=(\roman*)]
		\item If $\epsilon \subsetneq \gamma$ and $\epsilon \subsetneq \delta$, then we have $M(K, \mathcal{J}, \mathcal{L})_{\epsilon, \gamma}^G = M(K, \mathcal{J}, \mathcal{L})_{\epsilon, \delta}^G$.
		
		\item If $\gamma \subsetneq \epsilon$ and $\delta \subsetneq \epsilon$, then we have $M(K, \mathcal{J}, \mathcal{L})_{\gamma, \epsilon}^G = M(K, \mathcal{J}, \mathcal{L})_{\delta, \epsilon}^G$.
	\end{enumerate}
\end{lemma}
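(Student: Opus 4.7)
The plan is to reduce both (i) and (ii) to a common geometric picture in which $\gamma$ and $\delta$ sit as the two ``middle'' walls of a single spherical rank-$2$ residue, and then exploit the resulting symmetry between them. I focus on (i); assertion (ii) will be handled by a symmetric argument at the other end of the gallery.

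First, I would observe that all relevant reflection orders are infinite. Since $\epsilon \subsetneq \gamma$ the pair $\{\epsilon, \gamma\}$ is nested, so Lemma \ref{Lemma: nested infinite order}$(a)$ gives $o(r_\epsilon r_\gamma) = \infty$; likewise $o(r_\epsilon r_\delta) = \infty$. Hence both $M(K, \mathcal{J}, \mathcal{L})_{\epsilon, \gamma}^G$ and $M(K, \mathcal{J}, \mathcal{L})_{\epsilon, \delta}^G$ are computed via the infinite-order branch of Definition \ref{Definition: M(n,J)}, i.e.\ as unions over $n \in K$ and $r \in J_n$ of the sets $\{\omega_i, \omega_i' : i \in L_n^r\}$ attached to minimal $(n, r)$-galleries between the respective pairs. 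It therefore suffices to show that, for each fixed pair $(n, r) \in K \times S$, a minimal $(n, r)$-gallery between $\epsilon$ and $\gamma$ exists if and only if one between $\epsilon$ and $\delta$ does, and that in that case the associated sequences of middle roots $(\omega_i, \omega_i')$ agree. Next I would pin down the ambient geometry: because $o(r_\alpha r_\beta) < \infty$ and $\{\gamma, \delta\} = (\alpha, \beta)$, Lemma \ref{Lemma: nested infinite order}$(b)$ applied to each of $\gamma, \delta$ produces residues in $\partial^2 \alpha \cap \partial^2 \beta \cap \partial^2 \gamma$ and in $\partial^2 \alpha \cap \partial^2 \beta \cap \partial^2 \delta$; Lemma \ref{Lemma: residues in the 2-boundary are parallel}$(b)$ (applicable since $W$ is infinite in our $(4,4,4)$-setting) then forces $\partial^2 \alpha \cap \partial^2 \beta$ to be a single residue $R$. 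Thus $R \in \partial^2 \gamma \cap \partial^2 \delta$ as well, and in the $I_2(4)$-structure on $R$ the roots $\alpha, \beta$ are the two outer walls while $\gamma, \delta$ are the two inner walls.

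The core of the argument is to set up a type-preserving bijection between $(n, r)$-galleries from $\epsilon$ to $\gamma$ and $(n, r)$-galleries from $\epsilon$ to $\delta$. Given such a gallery $H_\gamma = (c_0, \ldots, c_k)$ between $\epsilon$ and $\gamma$, its final edge $\{c_{k-1}, c_k\}$ lies in $\partial \gamma$; I would show (using the shape ``$(u, r, r_{\{s,t\}}, \ldots, r, v)$'' together with Lemma \ref{Lemma: extending gallery to gallery in Min}$(a)$) that this edge, and a short tail of $H_\gamma$ ending in it, lies in $R$. Swapping this tail for the corresponding tail inside $R$ whose final edge lies in $\partial \delta$ produces a minimal gallery $H_\delta$ from $\epsilon$ to $\delta$ of overall shape $(u, r, r_{\{s,t\}}, \ldots, r, v')$, and hence of type $(n, r)$ by Lemma \ref{Lemma: unique (n,r)}. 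The reverse construction is symmetric, so we obtain a bijection. Because the swap only affects the final $r, v$ tail, the interior $r_{\{s,t\}}$-blocks of $H_\gamma$ and $H_\delta$ are identical; the roots $\omega_i, \omega_i'$ are defined in terms of the residues $R_{\{s,t\}}(c_{k_i})$ traversed by these blocks and therefore coincide, so the contribution of $(n, r)$ to $M_{\epsilon, \gamma}^G$ equals its contribution to $M_{\epsilon, \delta}^G$. This proves (i); for (ii) the analogous argument applies to the initial $u$-head of an $(n, r)$-gallery instead of its final $v$-tail, since there $\gamma, \delta \subsetneq \epsilon$ forces the gallery to \emph{begin} with a crossing of $\partial \gamma$ or $\partial \delta$ inside the same residue $R$.

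The main obstacle will be making the tail-swap fully rigorous: one must verify that the pattern ``$\ldots, r, v$'' at the end of $H_\gamma$ can genuinely be replaced by a valid ``$\ldots, r, v'$'' with $v' \in \{1_W, s, t\}$ whose last edge lies in $\partial \delta$, while preserving minimality and the $(n, r)$-shape. Here I would rely on Lemma \ref{mingallinrep} to track how $\partial \gamma$ and $\partial \delta$ sit inside $R$ relative to the preceding $r_{\{s,t\}}$-block, and on Lemma \ref{Lemma: extending gallery to gallery in Min} to ensure the resulting modified gallery remains minimal and extendable to a member of $\mathrm{Min}$.
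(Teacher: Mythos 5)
Your overall strategy matches the paper's: you identify the unique rank-$2$ residue $R \in \partial^2\alpha\cap\partial^2\beta$, reduce to the infinite-order branch of Definition \ref{Definition: M(n,J)} by noting that $\{\epsilon,\gamma\}$ and $\{\epsilon,\delta\}$ are nested (a step the paper leaves implicit but which is worth spelling out), and then pass from an $(n,r)$-gallery to $\gamma$ to one to $\delta$ by modifying it near $R$. In assertion (i) this is exactly what the paper does, splitting on whether $v=1_W$ (extend by one $\{s,t\}$-step) or $v\neq 1_W$ (truncate the last step), so that what you call a ``tail swap'' is really a length-one extension or truncation pairing the $v=1_W$ galleries with the $v\neq 1_W$ ones; the interior $r_{\{s,t\}}$-blocks and hence the roots $\omega_i,\omega_i'$ are unchanged, as you say.

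The gap is in your treatment of (ii). You assert that ``$\gamma,\delta\subsetneq\epsilon$ forces the gallery to \emph{begin} with a crossing of $\partial\gamma$ or $\partial\delta$ inside the same residue $R$'' and describe (ii) as a symmetric analogue of (i). It is not automatic, and the mechanism that forces it is different. In (i) the gallery ends at a panel of $\gamma$ whose location in $R$ is controlled by Lemma \ref{Lemma:Palpha} and Lemma \ref{Lemma: extending gallery to gallery in Min}(a). In (ii) the gallery $H=(d_0,\dots,d_k)$ starts at some panel of $\partial\gamma$ which \emph{a priori} need not lie in $R$; the paper invokes Lemma \ref{Lemma: extending gallery to gallery in Min}(b), which says that if $\{d_0,d_1\}\not\subseteq R$ then some simple root $\mu$ of $R$ has $-\mu\subseteq\epsilon$. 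Since the simple roots of $R$ are $\alpha$ and $\beta$ and since $\alpha,\beta,\epsilon\in\Phi(G)$ makes $\{\alpha,\epsilon\}$ and $\{\beta,\epsilon\}$ prenilpotent, $-\alpha\subseteq\epsilon$ or $-\beta\subseteq\epsilon$ is impossible, and one concludes $\{d_0,d_1\}\subseteq R$. This use of prenilpotency, i.e. of the hypothesis $\epsilon\in\Phi(G)$, is the crux of (ii) and is absent from your proposal; it is not a consequence of the nesting $\gamma,\delta\subsetneq\epsilon$ alone. Moreover, even after $\{d_0,d_1\}\subseteq R$ is established, the paper must still pin down $\ell(\proj_R 1_W,d_0)$ (ruling out the wrong value by another contradiction via Lemma \ref{mingallinrep} and the same prenilpotency) before the prepend/truncate move gives a valid $(n,r)$-gallery from $\delta$ to $\epsilon$. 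So (ii) requires genuinely more machinery than (i), and a write-up that treats them symmetrically would not go through.
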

\begin{proof}
	We abbreviate $M_{\alpha, \beta}^G := M(K, \mathcal{J}, \mathcal{L})_{\alpha, \beta}^G$ for all $(G, \alpha, \beta) \in \mathcal{I}$. Let $R$ be the unique residue of rank $2$ contained in $\partial^2 \alpha \cap \partial^2 \beta$ (cf.\ Lemma \ref{Lemma: residues in the 2-boundary are parallel}$(b)$) and let $(\alpha, \beta) = \{ \gamma, \delta \}$. Suppose first $\epsilon \subsetneq \gamma$ and $\epsilon \subsetneq \delta$. If $M_{\epsilon, \gamma}^G = \emptyset = M_{\epsilon, \delta}^G$, we are done. Thus we can assume $M_{\epsilon, \gamma}^G \neq \emptyset$. Then there exists a minimal gallery $H = (d_0, \ldots, d_k)$ of type $(n, r)$ between the roots $\epsilon$ and $\gamma$ for some $n\in K$ and $r\in J_n$. In particular, the type of $H$ is given by $(u, r, r_{\{s, t\}}, r, \ldots, r_{\{s, t\}}, r, v)$, where $u, v \in \{1_W, s, t\}$ and $r_{\{s, t\}}$ appears $n$ times. Using Lemma \ref{Lemma: extending gallery to gallery in Min}$(a)$, we can extend $(d_6, \ldots, d_k)$ to a gallery $\Gamma \in \mathrm{Min}$. Using Lemma \ref{wordsincoxetergroup} and induction, $\gamma$ is a non-simple root of the residue of rank $2$ containing $d_{k-2}, d_{k-1}$ and $d_k$. We distinguish the following cases:
	\begin{enumerate}[label=(\roman*)]
		\item $v=1$: Then we have $P_{\gamma} = \{ d_{k-1}, d_k \}$ (cf.\ Lemma \ref{Lemma:Palpha}) and $P_{\gamma} \subseteq R$. There exists $x\in \{s, t\}$ such that the minimal gallery $H$ extended by an $x$-adjacent chamber is still of type $(n, r)$ and between the roots $\epsilon$ and $\delta$. We deduce $M_{\epsilon, \gamma}^G = M_{\epsilon, \delta}^G$.
		
		\item $v\neq 1$: Using Lemma \ref{Lemma: not both down} and Lemma \ref{wordsincoxetergroup} we deduce that $R$ is the only residue such that $\gamma$ is a non-simple root of $R$. Then the minimal gallery $K := (d_0, \ldots, d_{k-1})$ is still of type $(n, r)$ and between the roots $\epsilon$ and $\delta$. We deduce $M_{\epsilon, \gamma}^G = M_{\epsilon, \delta}^G$.
	\end{enumerate}
	
	Now we assume $\gamma \subsetneq \epsilon$ and $\delta \subsetneq \epsilon$. If $M_{\gamma, \epsilon}^G = \emptyset = M_{\delta, \epsilon}^G$, we are done. Thus we can assume $M_{\gamma, \epsilon}^G \neq \emptyset$. Then there exists a minimal gallery $H = (d_0, \ldots, d_k)$ of type $(n, r)$ between $\gamma$ and $\epsilon$ for some $n\in K$ and $r \in J_n$. In particular, the type of $H$ is given by $(u, r, r_{\{s, t\}}, r, \ldots, r_{\{s, t\}}, r, v)$, where $u, v \in \{ 1_W, s, t \}$ and $r_{\{s, t\}}$ appears $n$ times. Clearly, $\gamma$ is a non-simple root of $R$. Note that $\alpha, \beta, \epsilon \in \Phi(G)$ and hence $\{ \alpha, \epsilon \}, \{ \beta, \epsilon \} \in \P$. Then Lemma \ref{Lemma: extending gallery to gallery in Min}$(b)$ yields $\{ d_0, d_1 \} \subseteq R$. We distinguish the following two cases:
	\begin{enumerate}[label=(\roman*)]
		\item $u=1_W$: Assume that $\ell(\proj_R 1_W, d_0) = 1$. Then Lemma \ref{mingallinrep} would imply that one of $-\alpha, -\beta$ is contained in one non-simple root of the $\{s, t\}$-residue containing $d_1$ (i.e.\ $\omega_1$ or $\omega_1'$) and hence one of $-\alpha, -\beta$ is contained in $\epsilon$. As this is a contradiction, we deduce $\ell(\proj_R 1_W, d_0) = 2$. Let $d$ be the chamber in $R$ adjacent to both $\proj_R 1_W$ and $d_0$. Then the gallery $(d, d_0, \ldots, d_k)$ is of type $(n, r)$ and between the roots $\delta$ and $\epsilon$. We deduce $M_{\gamma, \epsilon}^G = M_{\delta, \epsilon}^G$.
		
		\item $u\neq 1_W$: Assume $\ell(\proj_R 1_W, d_0) = 2$. In both cases ($d_2 \in R$ and $d_2 \notin R$) Lemma \ref{mingallinrep} would imply that one of $-\alpha, -\beta$ is contained in $\omega_2, \omega_2'$ and hence in $\epsilon$, which is a contradiction. Thus $\ell(\proj_R 1_W, d_0) = 1$. Again, if $d_2 \notin R$, then Lemma \ref{mingallinrep} would imply that one of $-\alpha, -\beta$ is contained in $\omega_1, \omega_1'$, which is a contradiction. Thus we can assume $d_2 \in R$. As $(d_1, \ldots, d_k)$ is still a gallery of type $(n, r)$ between the roots $\delta$ and $\epsilon$, we deduce $M_{\gamma, \epsilon}^G = M_{\delta, \epsilon}^G$ and the claim follows. \qedhere
	\end{enumerate}
\end{proof}

\begin{lemma}\label{Lemma: 2-nilpotent prec-ommutator blueprint}
	$\mathcal{M}(K, \mathcal{J}, \mathcal{L})$ is a Weyl-invariant commutator blueprint and the groups $U_w$ are nilpotent of class at most $2$.
\end{lemma}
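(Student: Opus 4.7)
The plan is to apply Proposition \ref{Proposition: 2-nilpotent}. We have already shown that $\mathcal{M}(K, \mathcal{J}, \mathcal{L})$ is a pre-commutator blueprint, and Lemma \ref{Lemma: M(K,J,L) infinite order coincide} verifies the coincidence condition (a) of that proposition. Thus the remaining tasks are to establish conditions (b)(i) and (b)(ii) of Proposition \ref{Proposition: 2-nilpotent} in the infinite-order case, to verify compatibility condition ($2$-n$3$) of Theorem \ref{Theorem: pre-com = com}, and to check Weyl-invariance.

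For condition (b), fix $(G,\alpha,\beta)\in\mathcal{I}$ with $G=(c_0,\dots,c_k)\in\mathrm{Min}$, $\beta=\alpha_k$ the last root crossed, and $o(r_\alpha r_\beta)=\infty$. We may assume $M_{\alpha,\beta}^G\neq\emptyset$, so by Lemma \ref{Lemma: unique (n,r)} there is a unique pair $(n,r)$ with $n\in K$, $r\in J_n$ such that $M_{\alpha,\beta}^G=\{\omega_i,\omega_i'\mid i\in L_n^r\}$. The first step is to show that the generators $u_\omega$ for $\omega\in M_{\alpha,\beta}^G$ pairwise commute in $U_H$ with $H:=(c_0,\dots,c_{k-1})$. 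For a single index $i$, the roots $\omega_i,\omega_i'$ are the two non-simple positive roots of the rank-$2$ residue $R_i=R_{\{s,t\}}(c_{k_i})$; inspection of the $I_2(4)$ root configuration gives $|(\omega_i,\omega_i')|=0$, so $M^G_{\omega_i,\omega_i'}=\emptyset$ by definition. For $i<j$ in $L_n^r$, Lemma \ref{mingallinrep} (applied inductively along the $(n,r)$-gallery) yields $\omega_i,\omega_i'\subsetneq\omega_j,\omega_j'$, whence all pertinent pairs have infinite-order product by Lemma \ref{Lemma: nested infinite order}; furthermore the subgallery of $G$ interior to any such pair fails the required $(n',r')$-prefix (after $\omega_i$ is crossed, the gallery continues with two letters in $\{s,t\}$ before the next $r$), so $M^G_{\omega_i,\omega_j}=M^G_{\omega_i,\omega_j'}=M^G_{\omega_i',\omega_j'}=\emptyset$. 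Combined with $u_\omega^2=1$ this immediately gives (b)(ii).

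Centrality (b)(i) is the step I expect to be the main obstacle. I would reduce it to showing $[u_\epsilon,u_\omega]=1$ in $U_H$ for every $\epsilon\in\Phi(H)$ and every $\omega\in M_{\alpha,\beta}^G$. When $o(r_\epsilon r_\omega)=\infty$, the same gallery-type bookkeeping as above shows that the subgallery of $G$ between $\epsilon$ and $\omega$ cannot be of any type $(n',r')$ appearing in $(K,\mathcal{J})$, so $M^G_{\epsilon,\omega}=\emptyset$. When $o(r_\epsilon r_\omega)<\infty$, Lemma \ref{Lemma: residues in the 2-boundary are parallel}(b) yields a unique $C\in\partial^2\epsilon\cap\partial^2\omega$; if $C\in\partial^2\alpha\cap\partial^2\beta$ the pre-commutator blueprint conditions on the rank-$2$ spherical residue give commutation directly, while the harder case $\partial^2\epsilon\cap\partial^2\alpha\cap\partial^2\beta=\emptyset$ requires the reflection-triangle/triangle analysis of Lemmas \ref{Lemma: residue cut by intersection reflections}, \ref{reflectiontrianglechamber} and \ref{Lemma: Triangle and infinite order}, essentially replaying the case distinction (a),(b) at the end of the proof of Proposition \ref{Proposition: 2-nilpotent}. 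This triangle bookkeeping, together with the systematic application of Lemma \ref{Lemma: M(K,J,L) infinite order coincide} to reorder factors inside $M^G_{\alpha,\beta}$, is what I anticipate being the most technical part.

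For ($2$-n$3$), the analogous argument shows $M^G_{\gamma,\alpha_{k+1}}=\emptyset$ for each $\gamma\in M^G_{\alpha_i,\alpha_j}$ with $j<k+1$, because $\gamma$ is one of the non-simple roots of some intermediate $\{s,t\}$-residue and the subgallery from $\gamma$ to $\alpha_{k+1}$ fails the $(n',r')$-prefix requirement; the identity then collapses to $\prod_\gamma u_\gamma=\prod_\gamma u_\gamma$. Weyl-invariance is immediate from the definition, since $M(K,\mathcal{J},\mathcal{L})^G_{\alpha,\beta}$ depends only on the pair $(\alpha,\beta)$ through the existence of a minimal gallery of prescribed type between them, and the $W$-action on $\Sigma(W,S)$ preserves the type of galleries; hence $sM^G_{\alpha,\beta}=M^{sG}_{s\alpha,s\beta}$ for every $s\in S$. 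Invoking Proposition \ref{Proposition: 2-nilpotent} then concludes that $\mathcal{M}(K,\mathcal{J},\mathcal{L})$ is a commutator blueprint and that each $U_w$ is nilpotent of class at most $2$.
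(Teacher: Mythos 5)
Your high-level outline (apply Proposition~\ref{Proposition: 2-nilpotent}, handle (a) via Lemma~\ref{Lemma: M(K,J,L) infinite order coincide}, then verify (b) and ($2$-n$3$), then Weyl-invariance) matches the paper, but two of the key steps are carried out incorrectly.

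For condition (b)(i), your reduction to showing $[u_\epsilon, u_\omega]=1$ for every $\epsilon\in\Phi(H)$ and every $\omega\in M_{\alpha,\beta}^G$ does not hold: the individual generators $u_{\omega_m}$ are in general \emph{not} central in $U_H$. Only the paired products $u_{\omega_m} u_{\omega_m'}$ are. The paper's argument bypasses any case analysis on $\epsilon$ entirely: since $L_n^r\subseteq\{2,\dots,n-1\}$, for each $m\in L_n^r$ the pair $\{\omega_m,\omega_m'\}$ equals $M_{\gamma_m,\gamma_m'}^G=(\gamma_m,\gamma_m')$ for two roots $\gamma_m,\gamma_m'$ both lying strictly between $\alpha$ and $\beta$, hence both in $\Phi(H)$. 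Thus $u_{\omega_m}u_{\omega_m'}=[u_{\gamma_m},u_{\gamma_m'}]\in[U_H,U_H]$, and the inductive hypothesis (that $U_H$ is nilpotent of class $\leq 2$) gives $[U_H,U_H]\leq Z(U_H)$ directly; Lemma~\ref{commutator2} then gives (b)(ii). Your proposed gallery-bookkeeping and triangle analysis for each pair $(\epsilon,\omega)$ is both far heavier and aimed at a false intermediate statement, so this step would not go through as written. (This is also precisely where the hypothesis $1,n\notin L_n^r$ is essential, which the paper flags explicitly.)

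For ($2$-n$3$), your claim that $M_{\gamma,\alpha_{k+1}}^G=\emptyset$ for every $\gamma\in M_{\alpha_i,\alpha_j}^G$ because the subgallery ``fails the $(n',r')$-prefix requirement'' is not correct. When $o(r_\gamma r_{\alpha_k})<\infty$, the set $M_{\gamma,\alpha_k}^G$ is governed by $|(\gamma,\alpha_k)|$, not by the existence of a gallery of type $(n',r')$, so the prefix argument is irrelevant; one needs the reflection-triangle analysis of Lemmas~\ref{Lemma: residue cut by intersection reflections}, \ref{reflectiontrianglechamber} and \ref{Lemma: Triangle and infinite order} to show $(\gamma,\alpha_k)=\emptyset=(\delta,\alpha_k)$. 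And when $o(r_\gamma r_{\alpha_k})=\infty$, $M_{\gamma,\alpha_k}^G$ may well be nonempty; what saves the identity is that $M_{\gamma,\alpha_k}^G=M_{\delta,\alpha_k}^G$ (Lemma~\ref{Lemma: M(K,J,L) infinite order coincide}), together with the centrality and involutivity of $\prod_{\rho\in M_{\gamma,\alpha_k}^G}u_\rho$ from part (b), which makes the two copies cancel. Without this case distinction your collapse to $\prod_\gamma u_\gamma=\prod_\gamma u_\gamma$ is unjustified.

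The Weyl-invariance step you give is fine and matches the paper.
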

\begin{proof}
	We abbreviate $M_{\alpha, \beta}^G := M(K, \mathcal{J}, \mathcal{L})_{\alpha, \beta}^G$ for all $(G, \alpha, \beta) \in \mathcal{I}$. We will apply Proposition \ref{Proposition: 2-nilpotent}. Suppose $(G, \alpha, \beta) \in \mathcal{I}$ with $o(r_{\alpha} r_{\beta}) < \infty$. Then we have
	\[ M_{\alpha, \beta}^G = \begin{cases}
		(\alpha, \beta) & \vert (\alpha, \beta) \vert = 2 \\
		\emptyset & \vert (\alpha, \beta) \vert < 2
	\end{cases} \]
	by definition. Let $G = (c_0, \ldots, c_k) \in \mathrm{Min}$, let $(\alpha_1, \ldots, \alpha_k)$ be the sequence of roots crossed by $G$, let $1 \leq j \leq k-1$ and let $\alpha := \alpha_j, \beta := \alpha_k$. Then condition $(a)$ of Proposition \ref{Proposition: 2-nilpotent} follows from Lemma \ref{Lemma: M(K,J,L) infinite order coincide}. Now we will show that condition $(b)$ holds. For that we suppose $o(r_{\alpha} r_{\beta}) = \infty$. If $M_{\alpha, \beta}^G = \emptyset$, we are done. Thus we can assume $M_{\alpha, \beta}^G \neq \emptyset$. Then there exists a minimal gallery of type $(n, r)$ between $\alpha$ and $\beta$ for some $n \in K$ and $r\in J_n$. In particular, we have $M_{\alpha, \beta}^G = \{ \omega_i, \omega_i' \mid i \in L_n^r \}$. Note that $\{ \omega_i, \omega_i' \} = M_{\gamma_i, \gamma_i'}^G$ for some $\gamma_i \leq_G \gamma_i' \in \Phi(G)$ with $\alpha \subseteq \gamma_i \leq_G \gamma_i' \subseteq \beta$, as $L_n^r \subseteq \{ 2, \ldots, n-1 \}$. We remark that we really need $1, n \notin L_n^r$. 
	
	We first show that $u_{\omega_i} u_{\omega_i'} \in Z(U_{(c_0, \ldots, c_{k-1})})$ for all $i \in L_n^r$. This implies that $\prod\nolimits_{\gamma \in M_{\alpha, \beta}^G} u_{\gamma} = \prod\nolimits_{i \in L_n^r} u_{\omega_i} u_{\omega_i'}$ is contained in $Z(U_{(c_0, \ldots, c_{k-1})})$ and, moreover, we deduce
	\[ \left( \prod\nolimits_{\gamma \in M_{\alpha, \beta}^G} u_{\gamma} \right)^2 = \left( \prod\nolimits_{i \in L_n^r} u_{\omega_i} u_{\omega_i'} \right)^2 = \prod\nolimits_{i\in L_n^r} (u_{\omega_i} u_{\omega_i'})^2 = 1.\]
	Note that the order $\leq_G$ on $\{ \omega_i, \omega_i' \}$ depends on $G$. Let $\epsilon \in \Phi(G) \backslash \{ \beta \}$. Then we have to show that $u_{\omega_i} u_{\omega_i'}$ and $u_{\epsilon}$ commute in $U_{(c_0, \ldots, c_{k-1})}$. But this group is nilpotent of class at most $2$ by induction and $u_{\omega_i} u_{\omega_i'} \in [U_{(c_0, \ldots, c_{k-1})}, U_{(c_0, \ldots, c_{k-1})}] \leq Z(U_{(c_0, \ldots, c_{k-1})})$.
	
	Now it is left to show that ($2$-n$3$) of Theorem \ref{Theorem: pre-com = com}\ref{nilpotent conditions} is satisfied. Let $1 \leq i < j \leq k$. Note that by construction of the sets $M_{\alpha, \beta}^G$, it suffices to show the claim for the case $o(r_{\alpha_i} r_{\alpha_j}) < \infty$ and $M_{\alpha_i, \alpha_j}^G \neq \emptyset$. Suppose that $M_{\alpha_i, \alpha_j}^G = \{ \gamma, \delta \}$ with $\gamma \leq_G \delta$. We distinguish the following cases:
	\begin{enumerate}[label=(\alph*)]
		\item $o(r_{\gamma} r_{\alpha_k}) < \infty$: Assume that $\alpha_i \subseteq \alpha_k$ and $\alpha_j \subseteq \alpha_k$. Then $(-\alpha_k) \subseteq (-\alpha_i) \cap (-\alpha_j) \subseteq (-\gamma)$ would yield a contradiction. We deduce $\alpha_i \not\subseteq \alpha_k$ or $\alpha_j \not\subseteq \alpha_k$. Suppose $\epsilon \in \{ \alpha_i, \alpha_j \}$ with $o(r_{\epsilon} r_{\alpha_k}) < \infty$. If $\epsilon = \alpha_j = \alpha_k$, then the claim follows. Thus we can assume $\alpha_j \neq \alpha_k$. It follows $\partial^2 \epsilon \cap \partial^2 \gamma \cap \partial^2 \alpha_k = \partial^2 \alpha_i \cap \partial^2 \alpha_j \cap \partial^2 \alpha_k = \emptyset$ from Lemma \ref{Lemma: residue cut by intersection reflections} and hence$\{ r_{\epsilon}, r_{\gamma}, r_{\alpha_k} \}$ is a reflection triangle. By Remark \ref{Remark: reflection triangle plus orientation yields triangle} there exist $\beta_{\epsilon} \in \{ \epsilon, -\epsilon \}, \beta_{\gamma} \in \{ \gamma, -\gamma \}$ and $\beta_k \in \{ \alpha_k, -\alpha_k \}$ such that $\{ \beta_{\epsilon}, \beta_{\gamma}, \beta_k \}$ is a triangle.
		
		Let $\epsilon' \in \{ \alpha_i, \alpha_j \} \backslash \{\epsilon\}$. Then we know by Lemma \ref{Lemma: residue cut by intersection reflections} and \ref{Lemma: Triangle and infinite order}$(a)$ that $o(r_{\delta} r_{\alpha_k}) = \infty = o(r_{\epsilon'} r_{\alpha_k})$. Lemma \ref{Lemma: nested infinite order}$(a)$ together with the fact $\{ \delta, \alpha_k \}, \{ \epsilon', \alpha_k \} \in \mathcal{P}$ imply that both pairs are nested. As $\delta \leq_G \alpha_k$ and $\epsilon' \leq_G \alpha_k$, we deduce $\delta \subseteq \alpha_k$ and $\epsilon' \subseteq \alpha_k$. Let $R \in \partial^2 \epsilon \cap \partial^2 \gamma = \partial^2 \alpha_i \cap \partial^2 \alpha_j \cap \partial^2 \delta$. Then $\emptyset \neq R \cap \delta \subseteq \alpha_k$ and (as $R \notin \partial^2 \alpha_k$) we deduce $R \subseteq \alpha_k$. This implies $\beta_k = \alpha_k$.
		
		Note that there exist $\beta_{\delta} \in \{ \delta, -\delta \}$ and $\beta_{\epsilon'} \in \{ \epsilon', -\epsilon' \}$ with $\beta_{\delta}, \beta_{\epsilon'} \in (\beta_{\gamma}, \beta_{\epsilon})$ by Lemma \ref{Lemma: residue cut by intersection reflections} and \ref{reflectiontrianglechamber}. Using Lemma \ref{Lemma: Triangle and infinite order}$(a)$ we obtain $(-\alpha_k) \subseteq \beta_{\delta}$ and $(-\alpha_k) \subseteq \beta_{\epsilon'}$, and hence $(-\delta) = \beta_{\delta}, (-\epsilon') = \beta_{\epsilon'} \in (\beta_{\gamma}, \beta_{\epsilon})$. By Lemma \ref{Lemma: interval of roots} we have $(-\epsilon') \in (\epsilon, -\gamma)$ and hence $\beta_{\gamma} = -\gamma, \beta_{\epsilon} = \epsilon$. Lemma \ref{Lemma: Triangle and infinite order}$(b)$ implies $(\delta, \alpha_k) = -(-\delta, -\alpha_k) = \emptyset$. Moreover, Lemma \ref{reflectiontrianglechamber} yields $(\gamma, \alpha_k) = (-\beta_{\gamma}, \beta_k) = \emptyset$ and hence $M_{\gamma, \alpha_k}^G = \emptyset = M_{\delta, \alpha_k}^G$. We conclude
		\allowdisplaybreaks
		\begin{align*}
			\prod\nolimits_{\sigma \in M_{\alpha_i, \alpha_j}^G} \left( u_{\sigma} \prod\nolimits_{\rho \in M_{\sigma, \alpha_k}^G} u_{\rho} \right) &=	\left( u_{\gamma} \prod\nolimits_{\rho \in M_{\gamma, \alpha_k}^G} u_{\rho} \right) \left( u_{\delta} \prod\nolimits_{\rho \in M_{\delta, \alpha_k}^G} u_{\rho} \right) \\
			&= u_{\gamma} u_{\delta} \\
			&= \prod\nolimits_{\sigma \in M_{\alpha_i, \alpha_j}^G} u_{\sigma}
		\end{align*}
				
		\item $o(r_{\gamma} r_{\alpha_k}) = \infty = o(r_{\delta} r_{\alpha_k})$: It follows from Lemma \ref{Lemma: M(K,J,L) infinite order coincide} that $M_{\gamma, \alpha_k}^G = M_{\delta, \alpha_k}^G$. We have already shown that condition $(b)$ of Proposition \ref{Proposition: 2-nilpotent} holds. Thus we know that $\prod\nolimits_{\epsilon \in M_{\gamma, \alpha_k}^G} u_{\epsilon} \in Z(U_{(c_0, \ldots, c_{k-1})})$ as well as $\left( \prod\nolimits_{\epsilon \in M_{\gamma, \alpha_k}^G} u_{\epsilon} \right)^2 = 1$ in the group $U_{(c_0, \ldots, c_{k-1})}$. We deduce the following:
		\allowdisplaybreaks
		\begin{align*}
			\left( u_{\gamma} \prod\nolimits_{\rho \in M_{\gamma, \alpha_k}^G} u_{\rho} \right) \left( u_{\delta} \prod\nolimits_{\rho \in M_{\delta, \alpha_k}^G} u_{\rho} \right) &= \left( u_{\gamma} \prod\nolimits_{\rho \in M_{\gamma, \alpha_k}^G} u_{\rho} \right) \left( u_{\delta} \prod\nolimits_{\rho \in M_{\gamma, \alpha_k}^G} u_{\rho} \right) \\
			&= u_{\gamma} u_{\delta} \left( \prod\nolimits_{\rho \in M_{\gamma, \alpha_k}^G} u_{\rho} \right)^2 \\
			&= u_{\gamma} u_{\delta}
		\end{align*} 
	\end{enumerate}
	
	It is left to show that $\mathcal{M}(K, \mathcal{J}, \mathcal{L})$ is Weyl-invariant. Let $w\in W, s\in S, G \in \mathrm{Min}_s(w)$ and $(G, \alpha, \beta) \in \mathcal{I}$ with $\alpha \neq \alpha_s \neq \beta$. If $o(r_{\alpha} r_{\beta}) < \infty$, then $o(r_{s\alpha} r_{s\beta}) < \infty$ and, as $(s\alpha, s\beta) = \{ s\gamma \mid \gamma \in (\alpha, \beta) \}$, we infer $M_{s\alpha, s\beta}^{sG} = sM_{\alpha, \beta}^G$. Thus we can assume $o(r_{\alpha} r_{\beta}) = \infty$. Suppose that there exists a gallery $H = (c_0, \ldots, c_k)$ of type $(n, r)$ between $\alpha$ and $\beta$ for some $n\in K, r\in J_n$. Then $(sc_0, \ldots, sc_k)$ is a gallery of type $(n, r)$ between the roots $s\alpha, s\beta$. This implies that a gallery of type $(n, r)$ exists between the roots $\alpha$ and $\beta$ if and only if a gallery of type $(n, r)$ exists between the roots $s\alpha$ and $s\beta$. This finishes the claim.
\end{proof}

\bibliographystyle{amsalpha}
\bibliography{references}

\providecommand{\bysame}{\leavevmode\hbox to3em{\hrulefill}\thinspace}
\providecommand{\MR}{\relax\ifhmode\unskip\space\fi MR }
\providecommand{\MRhref}[2]{%
  \href{http://www.ams.org/mathscinet-getitem?mr=#1}{#2}
}
\providecommand{\href}[2]{#2}
\begin{thebibliography}{DMVM12}

\bibitem[AB08]{AB08}
Peter Abramenko and Kenneth~S. Brown, \emph{Buildings}, Graduate Texts in
  Mathematics, vol. 248, Springer, New York, 2008, Theory and applications.
  \MR{2439729}

\bibitem[Bisa]{BiCoxGrowth}
Sebastian Bischof, \emph{On {G}rowth {F}unctions of {C}oxeter {G}roups},
  manuscript, 11 pp.

\bibitem[Bisb]{BiRGD}
\bysame, \emph{{RGD}-systems over $\mathbb{F}_2$}, manuscript, 30 pp.

\bibitem[Bis22]{Bi22}
\bysame, \emph{On commutator relations in 2-spherical {RGD}-systems}, Comm.
  Algebra \textbf{50} (2022), no.~2, 751--769. \MR{4375537}

\bibitem[Bis23]{BiDiss}
\bysame, \emph{Construction of {RGD}-systems of type $(4,4,4)$ over
  $\mathbb{F}_2$}, PhD thesis, Justus-Liebig-Universität Giessen, 2023.

\bibitem[Bou02]{Bo68}
Nicolas Bourbaki, \emph{Lie groups and {L}ie algebras. {C}hapters 4--6},
  Elements of Mathematics (Berlin), Springer-Verlag, Berlin, 2002, Translated
  from the 1968 French original by Andrew Pressley. \MR{1890629}

\bibitem[Cap07]{Ca07}
Pierre-Emmanuel Caprace, \emph{A uniform bound on the nilpotency degree of
  certain subalgebras of {K}ac-{M}oody algebras}, J. Algebra \textbf{317}
  (2007), no.~2, 867--876. \MR{2362945}

\bibitem[CM05]{CM05}
Pierre-Emmanuel Caprace and Bernhard M\"{u}hlherr, \emph{Reflection triangles
  in {C}oxeter groups and biautomaticity}, J. Group Theory \textbf{8} (2005),
  no.~4, 467--489. \MR{2152693}

\bibitem[CM06]{CM06}
\bysame, \emph{Isomorphisms of {K}ac-{M}oody groups which preserve bounded
  subgroups}, Adv. Math. \textbf{206} (2006), no.~1, 250--278. \MR{2261755}

\bibitem[CR16]{CR16}
Pierre-Emmanuel Caprace and Bertrand R\'{e}my, \emph{Simplicity of twin tree
  lattices with non-trivial communication relations}, Topology and geometric
  group theory, Springer Proc. Math. Stat., vol. 184, Springer, [Cham], 2016,
  pp.~143--151. \MR{3598164}

\bibitem[DMVM12]{DMVM11}
Alice Devillers, Bernhard M\"{u}hlherr, and Hendrik Van~Maldeghem,
  \emph{Codistances of 3-spherical buildings}, Math. Ann. \textbf{354} (2012),
  no.~1, 297--329. \MR{2957628}

\bibitem[Fel98]{Fe98}
A.~A. Felikson, \emph{Coxeter decompositions of hyperbolic polygons}, European
  J. Combin. \textbf{19} (1998), no.~7, 801--817. \MR{1649962}

\bibitem[GHM]{GHM16preprint2}
Matthias Gr\"{u}ninger, Max Horn, and Bernhard M\"{u}hlherr, \emph{Moufang twin
  trees of prime order, part 2, manuscript, {G}iessen, 27pp.}

\bibitem[GHM16]{GHM16}
\bysame, \emph{Moufang twin trees of prime order}, Adv. Math. \textbf{302}
  (2016), 1--24. \MR{3545922}

\bibitem[M\"99]{Mu99}
Bernhard M\"{u}hlherr, \emph{Locally split and locally finite twin buildings of
  {$2$}-spherical type}, J. Reine Angew. Math. \textbf{511} (1999), 119--143.
  \MR{1695793}

\bibitem[Par21]{PaDiss21}
M.~Parr, \emph{Moufang twin trees and $\mathbb{Z}$-systems}, PhD thesis,
  Justus-Liebig-Universität Giessen, 2021.

\bibitem[R\'02]{Re03}
Bertrand R\'{e}my, \emph{Groupes de {K}ac-{M}oody d\'{e}ploy\'{e}s et presque
  d\'{e}ploy\'{e}s}, Ast\'{e}risque (2002), no.~277, viii+348. \MR{1909671}

\bibitem[RR06]{RR06}
Bertrand R\'{e}my and Mark Ronan, \emph{Topological groups of {K}ac-{M}oody
  type, right-angled twinnings and their lattices}, Comment. Math. Helv.
  \textbf{81} (2006), no.~1, 191--219. \MR{2208804}

\bibitem[Seg09]{Seg09}
Yoav Segev, \emph{Proper {M}oufang sets with abelian root groups are special},
  J. Amer. Math. Soc. \textbf{22} (2009), no.~3, 889--908. \MR{2505304}

\bibitem[SW08]{SW08}
Yoav Segev and Richard~M. Weiss, \emph{On the action of the {H}ua subgroups in
  special {M}oufang sets}, Math. Proc. Cambridge Philos. Soc. \textbf{144}
  (2008), no.~1, 77--84. \MR{2388234}

\bibitem[Tit87]{Ti87}
Jacques Tits, \emph{Uniqueness and presentation of {K}ac-{M}oody groups over
  fields}, J. Algebra \textbf{105} (1987), no.~2, 542--573. \MR{873684}

\bibitem[Tit92]{Ti92}
\bysame, \emph{Twin buildings and groups of {K}ac-{M}oody type}, Groups,
  combinatorics \& geometry ({D}urham, 1990), edited by M. Liebeck and J. Saxl,
  London Math. Soc. Lecture Note Ser., vol. 165, Cambridge Univ. Press,
  Cambridge, 1992, pp.~249--286. \MR{1200265}

\bibitem[Tit13]{Ti73-00}
\bysame, \emph{R\'{e}sum\'{e}s des cours au {C}oll\`ege de {F}rance
  1973--2000}, Documents Math\'{e}matiques (Paris), vol.~12, Soci\'{e}t\'{e}
  Math\'{e}matique de France, Paris, 2013. \MR{3235648}

\bibitem[Wei03]{We03}
Richard~M. Weiss, \emph{The structure of spherical buildings}, Princeton
  University Press, Princeton, NJ, 2003. \MR{2034361}

\bibitem[Wei09]{We09}
\bysame, \emph{The structure of affine buildings}, Annals of Mathematics
  Studies, vol. 168, Princeton University Press, Princeton, NJ, 2009.
  \MR{2468338}

\end{thebibliography}

\end{document}